\title[A LG mirror theorem without concavity]{A Landau--Ginzburg mirror theorem \\ without concavity}
\author[Gu\'er\'e]{J\'er\'emy Gu\'er\'e}
\address{Institut de Math\'ematiques de Jussieu\\
Paris\\
France}
\email{jeremy.guere@gmail.com}
\newcommand{\SL}{\underline{\mathrm{SL}}}
\newcommand{\e}{\mathfrak{e}}
\newcommand{\grj}{\mathfrak{j}}
\newcommand{\gru}{\mathfrak{u}}
\newcommand{\grv}{\mathfrak{v}}
\newcommand{\kK}{\mathbf{K}}
\newcommand{\fc}{\mathfrak{c}}
\newcommand{\PV}{\mathbf{PV}}
\newcommand{\MF}{matrix factorization }
\newcommand{\MFs}{matrix factorizations }
\newcommand{\bw}{\mathbf{w}}
\newcommand{\cE}{\mathcal{E}}
\newcommand{\ccE}{\textrm{e}}
\newcommand{\du}{\vec{u}} 
\newcommand{\dv}{\vec{v}} 
\newcommand{\dw}{\vec{e}} 
\newcommand{\AB}{\mathfrak{R}}
\newcommand{\aA}{\mathbb{A}}
\newcommand{\bB}{\mathfrak{B}}
\newcommand{\rR}{\mathfrak{C}}
\newcommand{\ci}{\mathrm{i}}
\newcommand{\PP}{\mathbb{P}}
\newcommand{\CC}{\mathbb{C}}
\newcommand{\ZZ}{\mathbb{Z}}
\newcommand{\NN}{\mathbb{N}}
\newcommand{\QQ}{\mathbb{Q}}
\newcommand{\cO}{\mathcal{O}}
\renewcommand{\(}{\left(}
\renewcommand{\)}{\right)}
\newcommand{\ii}{\mathrm{i}}
\newcommand{\cC}{\mathcal{C}}
\newcommand{\cD}{\mathcal{D}}
\newcommand{\cF}{\mathcal{F}}
\newcommand{\cH}{\mathcal{H}}
\newcommand{\cL}{\mathcal{L}}
\newcommand{\cM}{\mathcal{M}}
\newcommand{\bP}{\mathbf{P}}
\newcommand{\cP}{\mathcal{P}}
\newcommand{\cQ}{\mathcal{Q}}
\newcommand{\fq}{\mathfrak{q}}
\newcommand{\cS}{\mathrm{Sym}}
\newcommand{\sS}{\mathcal{S}}
\newcommand{\Ch}{\mathrm{Ch}}
\newcommand{\Td}{\mathrm{Td}}
\newcommand{\ww}{\underline{w}}
\newcommand{\st}{\mathbf{H}}
\newcommand{\cvir}{c_{\mathrm{vir}}}
\newcommand{\cvirPV}{c_{\mathrm{vir}}^{\mathrm{PV}}}
\newcommand{\cvirFJRW}{c_{\mathrm{vir}}^{\mathrm{FJRW}}}
\newcommand{\Spec}{\textrm{Spec}}
\newcommand{\bs}{{\boldsymbol{s}}}
\newcommand{\rank}{\operatorname{rank}}
\DeclareMathOperator{\Aut}{Aut}
\theoremstyle{plain}
\newtheorem{thm}{Theorem}[section]
\newtheorem{pro}[thm]{Proposition}
\newtheorem{lem}[thm]{Lemma}
\newtheorem*{lem*}{Lemma}
\newtheorem{cor}[thm]{Corollary}
\theoremstyle{definition}
\newtheorem{dfn}[thm]{Definition}
\newtheorem{rem}[thm]{Remark}
\newtheorem*{rem*}{Remark}
\newtheorem*{rems*}{Remarks}
\newtheorem{exa}[thm]{Example}
\newtheorem*{exa*}{Example}
\newtheorem*{exait*}{\rm \em Example}
\newtheorem*{exadefit*}{\rm \em Example/Definition}
\newtheorem*{cla*}{\rm \em Claim}
\newtheorem*{dfn*}{Definition}
\def\<{\left\langle}
\def\>{\right\rangle}
\begin{document}
\begin{abstract}
We provide a mirror symmetry theorem in a range of cases where the
state-of-the-art techniques relying on concavity or convexity do not
apply. More specifically, we work on a family of FJRW potentials named
after Fan, Jarvis, Ruan, and Witten's quantum singularity theory and
viewed as the counterpart of a non-convex Gromov--Witten potential via
the physical LG/CY correspondence.
The main result provides an explicit formula for Polishchuk and Vaintrob's virtual cycle in genus zero.
In the non-concave case of the so-called chain invertible polynomials, it yields a compatibility theorem with the FJRW virtual cycle
and a proof of mirror symmetry for FJRW theory.
\end{abstract}

\maketitle

\tableofcontents

\setcounter{section}{-1}

\section{Introduction}
In the last two decades, mirror symmetry has been a central statement in theoretical physics and a fundamental driving force for several developments in mathematics. For instance it can be phrased mathematically as a prediction on Gromov--Witten invariants, namely the intersection numbers attached to curves traced on a Calabi--Yau variety. In this form, it has been proven in a vast range of concrete cases: the most famous example provides a full computation of the genus-zero invariants enumerating rational curves on the quintic threefold \cites{Gi,LLY}.
Even in this case, it is often pointed out how we still lack a complete computation in higher genus (only the genus-one case was completely proven by Zinger \cite{Zi}).

But even in genus zero the problem of computing Gromov--Witten invariants of projective varieties is far from being completely solved; indeed, most known techniques focus on computing Gromov--Witten invariants attached to cohomology classes which lie in the so-called ambient part of cohomology: the restriction to classes from the ambient projective space.
For the quintic threefold, working with ambient cohomology classes turns out to determine the entire theory; however, in general, this scheme covers only a tiny portion of quantum cohomology.
Remarkably, even the ambient cohomology classes may pose problem as soon as we work with orbifolds.

It is interesting to notice that these gaps in Gromov--Witten computation all arise from the same phenomena: as we argue below, certain positivity or negativity conditions named convexity and concavity are not always satisfied, making the virtual cycle\footnote{It is a crucial cycle of the moduli space of the theory, since Gromov--Witten invariants are intersection numbers of cohomological classes against the virtual cycle.} challenging to compute.
In genus one, this difficulty was overcome by Zinger after a great deal of hard work, but we still lack a comprehensive approach for higher genus.
Guided by the frame of ideas of mirror symmetry and the Landau--Ginzburg/Calabi--Yau correspondence, we switch to the quantum theory of singularities introduced by Fan, Jarvis, and Ruan \cites{FJRW,FJRW2} based on ideas of Witten \cite{Witten} (FJRW theory).
Single non-concave quantum invariants were inferred from concave ones using tautological relation (e.g.~using WDVV equation as in \cites{FJRW,KShen,Shen2}), but so far no systematic approach tackling directly the virtual cycle has been taken.
Polishchuk and Vaintrob recently opened the way to an algebraic computation: their construction \cite{Polish1} of a virtual cycle is given by applying the Chern character to a universal object, the fundamental matrix factorization. 

In this paper we build upon this work an effective method for computing this algebraic cycle in genus zero.
In a range of non-concave singularities --- the so-called chain potentials --- we provide a systematic computation of the genus-zero intersection numbers within Givental's quantum Riemann--Roch formalism.
We also prove a compatibility theorem between Polishchuk and Vaintrob's cycle and FJRW virtual cycle.
Altogether, we end with a mirror symmetry theorem for FJRW theory.
By casting this non-concave computation within Givental formalism, this paper takes a step forward in FJRW theory computing the virtual cycle and ultimately quantum invariants even beyond ambient cohomology. 
Next step would be to seek an extension of this genus-zero non-concave result to higher genus, and possibly to Gromov--Witten theory.

\subsection{Non-convexity in genus zero}
Consider a degree-$d$ hypersurface $X$ within the projective space $\PP^{d-1}$. Convexity holds in genus-$g$ if for every stable map $f \colon C \to \PP^{d-1}$, from a genus-$g$ curve $C$, we have $H^1(C,f^*\cO(d))=0$ (see \cite[Example B]{Coates3}).
When satisfied, this condition allows us to deduce the Gromov--Witten theory of $X$ from the known Gromov-Witten theory of $\PP^{d-1}$ via the quantum Lefschetz principle \cite{Coates2}.
For Calabi--Yau hypersurfaces, the condition holds in genus-zero as soon as the ambient projective space is Gorenstein, which is of course always the case for ordinary projective spaces.
But convexity can fail in genus-zero for (Calabi--Yau) hypersurfaces in the orbifold setting, as the weighted projective space $\PP(w_1,\dotsc,w_N)$ is Gorenstein if and only if $w_j$ divides $\sum_k w_k$ for all $j$.
Coates \emph{et al} discuss extensively the possibility of stable genus-zero maps with nontrivial obstruction in
``\textit{The Quantum Lefschetz Hyperplane Principle Can Fail for Positive Orbifold Hypersurfaces}'' \cite{Coates1}.

\begin{equation*}
\begin{array}{|cc|cc|}
\hline
\forall ~ j ~, ~ w_j \mid \sum\limits_{k=1}^N w_k & \quad & \quad & \exists ~ j \textrm{ s.t. } w_j \not \vert \sum\limits_{k=1}^N w_k \\[0.05cm]
\hline
\PP(w_1,\dotsc,w_N) \textrm{ is Gorenstein} & \quad & \quad & \PP(w_1,\dotsc,w_N) \textrm{ is not Gorenstein} \\[0.05cm]
\hline
\textrm{Convexity holds in genus-zero} & \quad & \quad & \textrm{Convexity can fail in genus-zero} \\[0.05cm]
\textrm{for CY hypersurfaces in } \PP(\ww) & \quad & \quad & \textrm{for CY hypersurfaces in } \PP(\ww)\\[0.05cm]
\hline
\end{array}
\end{equation*}

\subsection{Invertible polynomials}
Berglund--H\"ubsch's and Krawitz's treatment \cites{Hubsch,Krawitz} of hypersurfaces via the so-called invertible polynomials provides illuminating examples of non-Gorenstein ambient space. 
Invertible polynomials are characterized for having as many monomials as variables
\begin{equation*}
W(x_1, \dotsc ,x_N)=x_1^{m_{1,1}} \dotsm x_N^{m_{1,N}}+ \dotsb +x_1^{m_{N,1}}\dotsm x_N^{m_{N,N}}
\end{equation*}
and are naturally encoded by an invertible matrix of exponents $E_W=(m_{k,j})$. 
In this way $W$ is quasi-homogeneous with respect to a unique choice of the weights, and 
we further impose non-degeneracy: $\partial W(\underline{x})/\partial x_j=0, \ 
\forall j   \implies   x_j=0,\ \forall j$.
Thus, the zero locus $\left\lbrace W=0 \right\rbrace$ in the corresponding weighted projective space is a smooth (stack-theoretic) hypersurface and is Calabi--Yau as soon as the degree of $W$ equals the sum of the weights.
These polynomials were completely classified by Kreuzer and Skarke \cite{Kreuzer} as Thom--Sebastiani sums\footnote{A Thom--Sebastiani sum is a sum of terms with disjoint sets of variables.} of terms of type Fermat, chain or loop
\begin{enumerate}
\item[(a)] $x^{a+1}$;
\item[(b)] $x_1^{a_1}x_2+\dotsb+x_{c-1}^{a_{c-1}} x_c+x_c^{a_c+1}$ \quad with $c \geq 2$;
\item[(c)] $x_1^{a_1}x_2+\dotsb+x_{l-1}^{a_{l-1}} x_l+x_l^{a_l}x_1$ \quad \ with $l \geq 2$.
\end{enumerate}
Clearly, if only Fermat terms occur, the weights $w_j$ divide the degree $d$. 
On the other hand, the Gorenstein condition may fail when chain terms appear, consider for instance 
the chain polynomial $W=x_1^2x_2+ x_2^3x_3+x_3^5x_4+x_4^{10}x_5+x_5^{11}$ with weights $(4,3,2,1,1)$ and degree $11$.

Under the Calabi--Yau assumption, these polynomials allow an explicit and elementary construction of the (conjectural) mirror, by Berglund--H\"ubsch \cite{Hubsch} and by Krawitz \cite{Krawitz}:
\begin{equation*}
\begin{array}{cc}
\{W=0\}     & \\
& \textrm{A-side}   \\
\hline
\hline
& \textrm{B-side} \\
\left[ \{W^\vee =0\} / \SL(W^\vee) \right]   &  \\
\end{array}
\end{equation*}
where the quotient on the B-side is a stack quotient, $W^\vee$ is the invertible polynomial encoded by the transposed matrix $(E_W)^\textrm{T} = (m_{j,k})$ and $\SL(W^\vee)$ is a group containing automorphisms of $W^\vee$ of determinant $1$, see Remark \ref{SL}.
Since mirror symmetry yields explicit Gromov--Witten theory predictions, 
these examples are 
ideal for developing new techniques in non-convex cases. 

\subsection{LG/CY correspondence}
As mentioned above, precisely as in Witten \cite[\S 3.1]{Witten}, we look at Calabi--Yau via singularities; namely we pass from a Calabi--Yau (CY) hypersurface $\{W=0\}\subset \PP(\ww)$ to the Landau--Ginzburg (LG) model $W \colon \CC^N \to \CC$ whose monodromy around the origin is given by the group $\mu_d = \langle \grj \rangle$ of order $d$ generated by the grading element
\begin{equation*}
\grj := \textrm{diag}(\exp(2 \ci \pi w_1 /d), \dotsc ,\exp(2 \ci \pi w_N /d)).
\end{equation*}

This quantum theory of singularities was recently introduced under the name of FJRW theory by Fan, Jarvis, and Ruan \cites{FJRW,FJRW2} building upon Witten's initial analytic construction \cite{Witten}.
Polishchuk and Vaintrob have provided in \cite{Polish1} an algebro-geometric counterpart.
The compatibility between \cite{FJRW} and \cite{Polish1} is only partly proven, for instance for simple singularities \cite[Theorem 7.6.1]{Polish1} or for invariants with so-called narrow entries \cite[Theorem 1.2]{Li2}.
In Theorem \ref{compat}, we establish it also for (almost) every invertible polynomials with the maximal group of symmetries.

This paper is the first proof of one of the numerous conjectures beyond concavity casted by Chiodo and Ruan \cite{Chiodo2} within the unified framework of global mirror symmetry, made precise in the case of Fermat polynomials by Chiodo, Iritani, and Ruan \cite{LG/CY}. In this framework the above LG/CY correspondence is pictured as the mirror of the pencil deforming $\{W^\vee=0\}$ along a weighted projective line $\PP(d,1)$. 
More precisely, we show mirror symmetry at the point $t=0$ for every chain polynomial.
We hope this technique will shed new light on other non-convex cases that occur systematically in the global mirror symmetry conjecture.

\begin{figure}[ht]
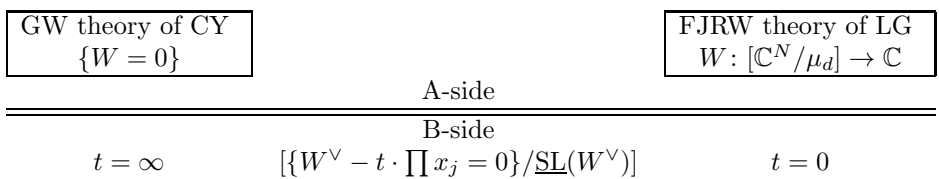

\begin{equation*}
\begin{array}{@{}ccc@{}}
\begin{array}{|c|}
\hline
\textrm{GW theory of CY }       \\
\{W=0\}  \\
\hline                       
\end{array} & & \begin{array}{|c|}
\hline
\textrm{FJRW theory of LG } \\
W \colon [\CC^N/\mu_d] \to \CC \\
\hline
\end{array} \\
 & \textrm{A-side} & \\
\hline
\hline
 & \textrm{B-side} & \\
t = \infty                      & \left[ \{ W^\vee -t \cdot \prod x_j =0 \} / \SL(W^\vee)\right]  & t=0  \\
                                &   & \\
\end{array}
\end{equation*}
\caption{Global Mirror Symmetry.}
\label{GMS}
\end{figure}


\subsection{Landau--Ginzburg side and concavity, a change of perspective}\label{intro4} 
The FJRW theory of $W\colon [\CC^N/\mu_d]\to \CC$ is radically different from the GW theory of $\{W=0\}$, but they have some aspects in common.
For instance, the dichotomy between ambient cohomology classes and primitive cohomology classes for $\{W=0\}$ emerges also on the LG side. Usually referred as states, the cohomology classes of an LG model are cohomology classes (relative to the Milnor fiber) of the inertia stack 
\begin{equation*}
I_{[\CC^N/\mu_d]}=\bigsqcup_{0\leq m <d} [\mathrm{Fix}(\CC^N,\grj^m)/\mu_d],
\end{equation*}
where $\mathrm{Fix}(\cdot,\gamma)$ is the fixed point set with respect to $\gamma$.
The counterpart of ambient classes simply corresponds to narrow states: cohomology classes on the zero-dimensional sectors of the inertia stack. The remaining higher dimensional sectors produce the counterpart to primitive cohomology: broad states (see \cite{statespace}). 

\begin{rem}
The entire paper deals with FJRW theory of invertible polynomials $W$ together with the maximal group of symmetries $\Aut(W)$; we only present the theory with $\mu_d$ to mention LG/CY correspondence.
Nevertheless, in genus zero and as far as each broad state involved comes from the theory with $\Aut(W)$, the theory with maximal group embodies theories with smaller groups such as $\mu_d$.
But in general, there are other broad states that we do not know how to treat.
\end{rem}

Interestingly, the counterpart of the convexity ($H^1=0$) of Gromov--Witten theory is the concavity ($H^0=0$) of FJRW theory.
More precisely, the geometric object considered by FJRW theory of $W \colon [\CC^N/\mu_d]\to \CC$
is an $n$-pointed  curve $\cC$ with a line bundle $\cL$ such that $\cL^{\otimes d} \simeq \omega_{\cC,\textrm{log}} :=\omega_\cC(\sigma_1+\dotsb+\sigma_n)$, where $\sigma_1,\dotsc,\sigma_n$ are the marked points. 
Using the weights of $W$, we define line bundles $\cL_1:=\cL^{w_1},\dotsc,$ $\cL_N:=\cL^{w_N}$.
Concavity in genus-$g$ means
\begin{equation*}
H^0(\cC,\cL_j) = 0 ~,~~ \forall j,
\end{equation*}
for every genus-$g$ objects.
\begin{equation*}
\begin{array}{|cc|cc|}
\hline
\forall ~ j ~, ~ w_j \mid d & \quad & \quad & \exists ~ j \textrm{ s.t. } w_j \not \vert d \\[0.05cm]
\hline
\textrm{Concavity holds in genus-zero} & \quad & \quad & \textrm{Concavity always fails in genus-zero} \\[0.05cm]
\textrm{for polynomial singularities with} & \quad & \quad & \textrm{for polynomial singularities with} \\[0.05cm]
\textrm{weights } (w_1,\dotsc,w_N) \textrm{ and degree } d & \quad & \quad & \textrm{weights } (w_1,\dotsc,w_N) \textrm{ and degree } d \\[0.05cm]
\hline
\end{array}
\end{equation*}

\subsection{An invertible limit to the Euler class}\label{0.5b}
The virtual cycle is central to FJRW theory. It is a well-defined cycle whose construction is non-trivial, but, at least when restricted to narrow states, it provides in a very special case the answer to a natural algebro-geometric question which is open in general: what is an Euler class of a push-forward of a vector bundle?

In FJRW theory, we are confronted with the following situation. 
Consider a family of curves $\pi \colon \cC \rightarrow S$ over a smooth and proper base $S$, together with universal line bundles $\cL_1, \dotsc, \cL_N$ satisfying certain algebraic relations. We set $\cE:= \cL_1 \oplus \dotsb \oplus \cL_N$.
Under the concavity assumption, the sheaf $R^1\pi_*\cE$ is a vector bundle and the push-forward $R^0\pi_*\cE$ vanishes.
Therefore we define the virtual cycle as the Poincar\'e dual of the Euler class of the vector bundle $R^1\pi_*\cE$, that is
\begin{equation*}
\cvir := c_\textrm{top}(R^1\pi_*\cE).
\end{equation*}
Without concavity, we have to deal with both $R^0\pi_*\cE$ and $R^1\pi_*\cE$, and none of them is a vector bundle in general.
Remarkably, the condition $\cL^{\otimes d} \simeq \omega_{\pi,\textrm{log}}$ on the universal line bundle gives us the opportunity to extend the definition of Euler class to the K-theoretic element $R^\bullet\pi_*\cE$, in a way that respects the multiplicative property\footnote{The multiplicative property $c_\textrm{top}(V+W)=c_\textrm{top}(V) \cdot c_\textrm{top}(W)$ is the starting point of cohomological field theory, namely the factorization property.} of Euler class.
Namely, we use Polishchuk--Vaintrob's algebraic class, revisited as the cohomology of a recursive complex (see Definition \ref{defrecursiveMF}) and computed as the limit of a specific characteristic class, that we now explain.

For a vector bundle $V$ on $S$ and a parameter $t \in \CC$, let us define the class
\begin{equation}\label{charclass}
\fc_t(V) := \Ch (\lambda_{-t} V^\vee) \cdot \Td (V) \in H^*(S)\left[ t\right],
\end{equation} 
where $\lambda_{-t}$ denotes the $\lambda$-ring structure of K-theory according to \cite{Fult}, that is
\begin{equation*}
\lambda_{t} V := \sum_{k \geq 0} \Lambda^k V \cdot t^k \in K^0(S)\left[ t\right] .
\end{equation*}
The classes $\lambda_{t} V$ and $\fc_t(V)$ are invertible in $K^0(S)[\![ t ]\!]$ and in $H^*(S)[\![ t ]\!]$ and, by \cite[Proposition 5.3]{Fult}, we have
\begin{equation*}
\lim_{t \to 1} \fc_t(V) = c_\textrm{top}(V).
\end{equation*}
For two vector bundles $A$ and $B$, we define the function $\fc_t \colon K^0(S) \rightarrow H^*(S)[\![ t ]\!]$ by
\begin{equation}\label{charclass2}
\fc_t(B-A) := \frac{\fc_t(B)}{\fc_t(A)} = \Ch \biggl( \frac{\lambda_{-t}B^\vee}{\lambda_{-t}A^\vee} \biggr) \cdot \cfrac{\Td B}{\Td A} \in H^*(S)[\![ t ]\!],
\end{equation}
and the limit $t \to 1$ generally diverges, since the class $c_\textrm{top}$ is not invertible.

\begin{rem}
The class $\fc_t(V)$ differs from the equivariant Euler class of $V$ appearing in previous similar approaches to quantum cohomology, as in \cite{Coates2}.
In terms of the roots $\alpha_1,\dotsc,\alpha_v$ of the vector bundle $V$, the class $\fc_t(V)$ is\footnote{Strictly speaking, the class $\fc_t(\cdot)$ is defined for $t \neq 1$ in terms of Chern characters via the formula \eqref{cvirx}. Indeed, the power series \eqref{charclass2} has only a radius of convergence equal to $1$ and the formulas \eqref{charclass2} and \eqref{cvirx} coincide for $\left| t \right| <1$, by Lemma \ref{formaldev}.}
\begin{equation*}
\fc_t(V) = \prod_{k=1}^v \frac{e^{\alpha_k}-t}{e^{\alpha_k}-1} \cdot \alpha_k, \qquad \textrm{for $t \neq 1$.}
\end{equation*}
\end{rem}

We state straight away the main technical result of the paper in the case of a chain polynomial with narrow states.
We refer to Theorem \ref{main} for a complete statement, including broad states and other invertible polynomials.

\begin{thm}[See Theorem \ref{main}]\label{mainb}
In the narrow sector and for a chain polynomial $W=x_1^{a_1}x_2+\dotsb+x_{N-1}^{a_{N-1}}x_N+x_N^{a_N+1}$, the following limit converges and equals the genus-zero virtual class
\begin{equation*}
\cvir =  \lim_{t \to 1} \prod_{j=1}^N \fc_{t_j}(-R \pi_*(\cL_j))
\end{equation*}
with $t_j:=t^{(-a_1)\dotsm (-a_{j-1})}$.
\end{thm}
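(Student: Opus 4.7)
The plan is to unfold Polishchuk and Vaintrob's virtual cycle through the recursive complex of Definition \ref{defrecursiveMF} and match its Chern character with the limit on the right, via a Grothendieck--Riemann--Roch computation on the universal curve. The bridge between the two sides is the chain relation on the weights: from $W = x_1^{a_1}x_2 + \dotsb + x_{N-1}^{a_{N-1}}x_N + x_N^{a_N+1}$ being quasi-homogeneous of degree $d$, one obtains $a_j w_j + w_{j+1} = d$ for $j < N$ and $(a_N+1)w_N = d$, hence
\begin{equation*}
\cL_{j+1} \simeq \cL_j^{\otimes(-a_j)} \otimes \omega_{\pi,\textrm{log}}, \qquad j = 1, \dotsc, N-1.
\end{equation*}
This is exactly the relation that propagates through $\fc_t$: since $\omega_{\pi,\textrm{log}}$ is neutral at $t=1$ and $\fc_t$ depends on a line bundle only through $\lambda_{-t}$, the operation $\cL \mapsto \cL^{\otimes(-a_j)}$ amounts on the parameter side to $t \mapsto t^{-a_j}$. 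Iterating along the chain then yields the exponent $t_j = t^{(-a_1)\dotsm(-a_{j-1})}$.

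First I would present the fundamental matrix factorization $\JG$ on the universal curve $\pi \colon \cC \to S$ as an iterated cone built from the links $x_j^{a_j}x_{j+1}$ of the chain, as encoded in Definition \ref{defrecursiveMF}. In the K-theory of $\cC$ this decomposes the class of $\JG$ into $N$ alternating contributions, each involving only $\cL_j$ (up to a twist by $\omega_{\pi,\textrm{log}}$) through the $\lambda_{-t_j}$-operation. Pushing this forward to $S$ via Grothendieck--Riemann--Roch with the Todd class of the relative tangent bundle, each contribution transforms into $\fc_{t_j}(-R\pi_*\cL_j)$ by the defining identity \eqref{charclass2}, so that the total Chern character of $\JG$ reads as the product $\prod_j \fc_{t_j}(-R\pi_*\cL_j)$ before any limit in $t$ is taken.

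The narrow hypothesis enters next, through orbifold Riemann--Roch on $\pi$: the isotropies at all marked points act nontrivially on every $\cL_j$, which rigidifies the invariant part of each $R\pi_*\cL_j$ on $S$ and removes broad-state corrections, letting the recursion propagate uniformly across $j$. This is the step where chain polynomials (as opposed to more general invertibles) are essential: the predecessor of $\cL_{j+1}$ is uniquely $\cL_j$, so the propagation of $t$-exponents is a well-defined iteration.

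The main obstacle, and the heart of the proof, will be the convergence of the limit $t \to 1$. Since $\fc_t(-V) = \fc_t(V)^{-1}$ diverges in general at $t=1$, each factor $\fc_{t_j}(-R\pi_*\cL_j)$ has poles coming from the non-vanishing sub-character of $R\pi_*\cL_j$ along $S$. I would analyse these poles using the orbifold Riemann--Roch formula established in the previous step, which pins down the invariant part of each pushforward, and then use the chain recursion $\cL_{j+1} \simeq \cL_j^{\otimes(-a_j)}\otimes\omega_{\pi,\textrm{log}}$ to show that the pole of order $r$ of $\fc_{t_j}$ at $t_j = 1$ is balanced by a zero of matching order of $\fc_{t_{j+1}}$ at $t_{j+1} = 1$. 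After this pairwise cancellation along the chain, the limit is finite, and its value matches termwise with the Chern character of the recursive complex, which is by construction Polishchuk and Vaintrob's $\cvir$.
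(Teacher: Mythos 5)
Your proposal rests on a relation that is false: the claim that the line-bundle operation $\cL \mapsto \cL^{\otimes(-a_j)}\otimes\omega_{\pi,\log}$ ``amounts on the parameter side to $t\mapsto t^{-a_j}$'' in the class $\fc_t$. The class $\fc_t$ is a genuine multiplicative characteristic class and does not transform this way under tensor powers. Already for a single line bundle $L$ with root $\alpha$ one has $\fc_t(L^{\otimes a}) = \frac{e^{a\alpha}-t}{e^{a\alpha}-1}\,a\alpha$, while $\fc_{t^a}(L)=\frac{e^\alpha - t^a}{e^\alpha-1}\,\alpha$; these are unrelated, and the discrepancy only worsens when applied to the higher-rank K-classes $-R\pi_*\cL_j$. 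In the paper the exponents $t_j=t^{(-a_1)\dotsm(-a_{j-1})}$ do not arise from any identity among the $\cL_j$. They arise from the \emph{polytope} structure of Theorem \ref{genus0theo}: the K-theoretic evaluation of the two-periodic complex is a finite sum over $\cP(R_1,\dotsc,R_N)$, and the defining inequality \eqref{domainofsum} for $j=1$ bounds exactly the linear combination $z_1 - a_1 z_2 + \dotsb + (-a_1)\dotsm(-a_{N-1})z_N$. Substituting $x_j\mapsto t_j$ in the formal series \eqref{formalcvir} makes this linear combination the $t$-exponent of the monomial $\prod x_j^{z_j}$, which is why the truncated sum becomes a Laurent polynomial in $t$.

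Your GRR step is also misconceived in the same direction: there is no $t$-parameter in the matrix factorization $\PV$, so the claim that its pushforward Chern character ``reads as the product $\prod_j \fc_{t_j}(-R\pi_*\cL_j)$ before any limit in $t$ is taken'' does not make sense. The pushforward Chern character is the specific K-theory sum \eqref{sumpolytope}; the parameter $t$ is introduced only afterward as a device to express that finite sum as a limit of characteristic classes (Theorem \ref{formula}).

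Finally, the convergence is not proved by a pairwise pole/zero cancellation between consecutive indices $j$ and $j+1$. The paper's argument (Theorem \ref{formula}) is a decreasing induction establishing $\mathbf{F}_j(\lambda_j)\dotsm\mathbf{F}_N(\lambda_N)=\mathbf{G}_j(\lambda_j,\dotsc,\lambda_N)$, with the induction step exploiting the stability of $\mathbf{G}_j(1,\dotsc,1)$ under enlarging $R_j$; this stability is itself a consequence of Theorem \ref{genus0theo}. The genuine technical core of the proof --- the non-degeneracy condition, the use of Green's exactness theorem, the $\ZZ^{N+1}$-grading and ``puzzle'' decomposition into stable pieces $\cQ_0,\dotsc,\cQ_N$, and the spectral sequence argument that peels them off one at a time --- is entirely absent from your sketch, and without it there is no control of the K-theory class of $T$, hence nothing to which the $\fc_t$-limit could be compared.
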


When broad entries are involved, Theorem \ref{main} still bears information on the computation of Polishchuk and Vaintrob's algebraic class.
In general, the compatibility with FJRW virtual class is still an open question, but we proved it for (almost) every invertible polynomials with the maximal group of symmetries.

\begin{thm}[See Theorem \ref{compat}]
Let us consider invertible polynomials with no monomials of the form $x^ay+y^2$ or $x^ay+y^2x$.
For every such polynomials with maximal group of symmetries and in every genus, Polishchuk and Vaintrob's algebraic class coincides with Fan--Jarvis--Ruan--Witten virtual class, up to a rescaling of the broad sector.
\end{thm}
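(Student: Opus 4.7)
The plan is to reduce the theorem, sector by sector, to the already-known agreement on narrow states. Both the Polishchuk--Vaintrob algebraic class \cite{Polish1} and the FJRW virtual class \cite{FJRW} define cohomological field theories on the full state space attached to $(W,\Aut(W))$, and both are compatible with Thom--Sebastiani sums of invertible polynomials. This reduces the problem to a single atomic piece, Fermat, chain, or loop, at a time.

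For insertions which are all narrow, the two classes are already known to coincide in every genus: \cite[Theorem 7.6.1]{Polish1} for simple singularities and \cite[Theorem 1.2]{Li2} for the general narrow case. I would invoke this as the base of the comparison and concentrate entirely on the broad insertions.

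For the maximal automorphism group, the broad sectors are indexed by those $\gamma \in \Aut(W)$ with non-zero fixed locus. A direct enumeration (separately for Fermat, chain, and loop pieces) shows that such a fixed locus is itself the Milnor fibre of a smaller Fermat/chain/loop polynomial obtained by deleting the non-fixed variables, so each broad state space decomposes as a tensor product of lower-dimensional atomic pieces. On the PV side, the recursive matrix factorization complex of Definition \ref{defrecursiveMF}, together with the explicit formula of Theorem \ref{main} of the present paper, expresses the PV class on any broad sector as a controlled twist of a lower-dimensional narrow class. On the FJRW side, broad correlators reduce to narrow ones through tautological relations: in genus zero by the WDVV-style arguments of \cites{FJRW,KShen,Shen2}, and in higher genus either by Teleman's reconstruction for semisimple cohomological field theories or by a direct cosection/obstruction analysis. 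Matching the two reductions sector by sector, the narrow equality transports to a broad equality up to a universal scalar per sector, which is the rescaling announced in the statement.

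The main obstacle is precisely this broad-to-broad comparison: aligning the matrix factorization computation with the analytic/cosection computation sector by sector, and showing that the discrepancy is a \emph{scalar} rather than a more complicated cohomological twist on the corresponding fixed locus. The excluded monomials $x^a y + y^2$ and $x^a y + y^2 x$ are exactly the length-two chain and loop pieces with tail exponent $2$; in these cases an order-two element of $\Aut(W)$ produces a broad sector in which the formula of Theorem \ref{main} and the tautological reduction of the FJRW side cannot be aligned by any single scalar. Isolating and discarding these pathological sectors is the technical heart of the argument, and the exclusion hypothesis is precisely what guarantees a clean sectorwise rescaling in every remaining case.
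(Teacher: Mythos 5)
Your outline agrees with the paper on the first two steps (Thom--Sebastiani reduction to Fermat/chain/loop atoms, and the narrow-sector match via \cite[Theorem 1.2]{Li2}), but the broad-sector mechanism you propose is not the one the paper uses, and as written it has real gaps.

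The paper's argument for the broad sector does not go through WDVV-style tautological relations, Teleman reconstruction, or a cosection analysis. Its key mechanism is the \emph{factorization} (splitting/degeneration) axiom shared by both cohomological field theories. Concretely: for each broad basis state $e_\gamma$ one exhibits two \emph{narrow} states $u_1,u_2$ such that the genus-zero three-point class $\cvirPV(e_\gamma,u_1,u_2)_{0,3}$ is a non-zero scalar (this is accessible via Remark~\ref{3pointt}: the virtual degree vanishes). One then \emph{defines} the rescaling $\Phi$ on $e_\gamma$ as the ratio of the PV and FJRW three-point numbers. Once $\Phi$ is defined, the factorization axiom propagates the identity $\cvirFJRW = \cvirPV\circ\Phi^{\otimes n}$ to all $n$-point classes in all genera, and also shows $\Phi$ is pairing-preserving. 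For an even loop there are two broad states and the scalar becomes a $2\times 2$ change of basis constructed from a non-degenerate matrix of three-point correlators, but the logic is the same. This is more elementary and more robust than your route.

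Two concrete problems with your route. First, Teleman's reconstruction theorem requires the CohFT to be semisimple, and the FJRW theory of a Calabi--Yau type polynomial at $t=0$ is \emph{not} semisimple (the small quantum product is nilpotent there), so that appeal cannot establish the higher-genus statement. Second, you cite Theorem~\ref{main} as controlling the PV class on broad sectors in every genus; Theorem~\ref{main} is a genus-zero formula only. The paper does not compute higher-genus PV classes directly; higher genus is handled purely through the CohFT factorization once $\Phi$ has been pinned down by genus-zero three-point data.

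Your explanation of the excluded monomials is also off. The exclusion of $x^ay+y^2$ is because its decoration satisfies $f_W(v_2)=a_2=1<2$, which violates the hypothesis $a_j\geq 2$ of the recursive-complex machinery (Definition~\ref{defrecursiveMF}), so Theorem~\ref{main} does not apply there, not because an order-two element produces a pathological sector. The broad sector at the maximal group comes from $\gamma=1$, not from an order-two automorphism, and the phenomenon you describe (``cannot be aligned by any single scalar'') is not what the paper isolates.
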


\subsection{Mirror symmetry}\label{0.5}
We go further and compute a big I-function for chain polynomials with maximal group of symmetries, see Theorem \ref{bigItheorem}.
In particular, we prove mirror symmetry for Calabi--Yau chain polynomials, see Theorem \ref{thmfinal}.

Let $W$ be a chain polynomial
\begin{equation*}
W=x_1^{a_1}x_2+\dotsb+x_{N-1}^{a_{N-1}}x_N+x_N^{a_N+1},
\end{equation*}
with weights $(w_1,\dotsc,w_N)$ and degree $d$.
The mirror polynomial $W^\vee$ is defined by
\begin{equation*}
W^\vee=y_1^{a_1}+y_1y_2^{a_2}+\dotsb+y_{N-1}y_N^{a_N+1},
\end{equation*}
with different weights and degree in general, \cites{Hubsch,Krawitz}.
We look at two local systems.

On the B-side, the local system is given by the primitive cohomology of the fibration $\left[ \{ W^\vee - t \cdot \prod x_j = 0 \} / \SL(W^\vee)\right]_t \rightarrow \Delta^*$ over a pointed disk $\Delta^* \subset \CC$ around $0$, together with the Gauss--Manin connection $\nabla^B$.
Then, we have a sub-local system $(\mathcal{E}^B,\nabla^B)$ attached to the Picard--Fuchs equation
\begin{equation*}
\biggl[ t^d \prod_{j=1}^N \prod_{c=0}^{w_j-1} (\frac{w_j}{d}   \frac{t \partial}{\partial t} + c) - \prod_{c=1}^d ( \frac{t \partial}{\partial t} - c)\biggr] \cdot f(t) = 0.
\end{equation*}
of the polynomial $W^\vee$, see e.g.~\cites{Morrisson,Ebelingstudent}.
Observe that the integers $w_j$ and $d$ in the Picard--Fuchs equation are the weights and the degree of the polynomial $W$.
A fundamental solution of the Picard--Fuchs equation is given by the I-function \eqref{finalformulaI}.

On the A-side, we take the trivial vector bundle given by the state space over itself, equipped with a connection $\nabla^A$ induced by genus-zero FJRW invariants.
This local system can be entirely recovered\footnote{The J-function is a fundamental solution of the scalar differential equation from the local system, see \cite[Definition 4.6.]{Guest}.} by the J-function
\begin{eqnarray*}
J(h,-z) = -z e_\grj + h + 
\sum_{\substack{n \geq 0 \\ l \geq 0}} ~~ \sum_{\substack{\gamma_1,\dotsc,\gamma_n,\widetilde{\gamma} \\ \in \Aut(W)}} \langle e_{\gamma_1},\dotsc,e_{\gamma_n},\tau_{l}(e_{\widetilde{\gamma}})  \rangle_{0,n+1} \cfrac{h^{\gamma_1}\dotsm h^{\gamma_n}}{n!(-z)^{l+1}} e^{\widetilde{\gamma}},
\end{eqnarray*}
where $h = \sum h^\gamma e_\gamma$ is the decomposition of an element of the state space on a basis and $e^{\widetilde{\gamma}}$ is dual to $e_{\widetilde{\gamma}}$ via the pairing of the state space.

\begin{thm}[See\footnote{Precisely, we prove in Theorem \ref{thmfinal} that the I-function is proportional to the restriction of the J-function to $\tau(\Delta^*)$. The embedding $\tau$ is called the mirror map.} Theorem \ref{thmfinal}]
Let $W$ be a Calabi--Yau chain polynomial and $\Delta^*$ be a sufficiently small pointed disk of $\CC$ around $0$.
There exists an explicit embedding $\tau$ of the pointed disk $\Delta^*$ into the state space of FJRW theory, such that we have an isomorphism of local systems
\begin{equation*}
\tau^*(\mathcal{E}^A , \nabla^A) \simeq (\mathcal{E}^B , \nabla^B)
\end{equation*}
over $\Delta^*$, where $(\mathcal{E}^A , \nabla^A)$ is the local system over $\tau(\Delta^*)$ entirely determined by the restriction of the J-function to $h \in \tau(\Delta^*)$.
\end{thm}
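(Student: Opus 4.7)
The plan is to deduce this mirror symmetry statement as a corollary of the big I--function computation (Theorem~\ref{bigItheorem}) combined with the compatibility of Polishchuk--Vaintrob's cycle with the FJRW cycle (Theorem~\ref{compat}). First I would set the stage in Givental's symplectic formalism: the genus-zero FJRW theory, via the compatibility theorem, defines a Lagrangian cone $\cL^{\mathrm{FJRW}}$ in the symplectic state space, and the J--function parametrises a specific slice of this cone. By Theorem~\ref{bigItheorem}, the big I--function lies on this cone (this is the content of what ``I--function'' means in the Coates--Givental framework). Hence the A--side local system $(\mathcal{E}^A,\nabla^A)$ can be recovered from either the J-- or the I--function, provided the two are related by a change of variables in $h$ together with a rescaling by a function of $z$.

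Next I would identify $(\mathcal{E}^B,\nabla^B)$ using a direct computation: starting from the explicit formula for $I(t,z)$ given in~\eqref{finalformulaI}, I would verify that $I$ is annihilated by the Picard--Fuchs operator of $W^{\vee}$. Because the weights and degree appearing in the Picard--Fuchs equation are exactly those of $W$ and because the I--function has the hypergeometric shape $\sum_k t^{dk/\cdots} \prod_j \Gamma$--factors $e_{\gamma(k)}$ coming from the recursive formula of Theorem~\ref{main}, this verification reduces to a Gamma-function identity, namely showing that multiplication by $t$ shifts the summation index in a way that matches the factorisation
\[
t^d \prod_{j=1}^N \prod_{c=0}^{w_j-1}\Bigl( \tfrac{w_j}{d} \tfrac{t \partial}{\partial t} + c \Bigr) - \prod_{c=1}^{d}\Bigl( \tfrac{t \partial}{\partial t} - c \Bigr).
\]
Since both sides of this operator identity act termwise on the hypergeometric series, the verification is a bookkeeping exercise over the index $k$.

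Having established that $I$ is a fundamental solution of the Picard--Fuchs equation, and knowing that $I$ lies on $\cL^{\mathrm{FJRW}}$, I would then extract the mirror map $\tau$ by Birkhoff factorisation: expanding $I(t,-z)$ in $z$ near $z = \infty$, the first two terms have the form $-z\,e_\grj + \tau(t) + O(z^{-1})$, and this very expansion defines the embedding $\tau \colon \Delta^* \hookrightarrow$ state space. Givental's uniqueness statement (any slice of the Lagrangian cone that looks like $-z \,e_\grj + h + O(z^{-1})$ must be of the form $J(h,-z)$) then gives the identification $I(t,-z) = J(\tau(t),-z)$ as sections over $\Delta^*$. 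From this equality of sections, the isomorphism $\tau^*(\mathcal{E}^A,\nabla^A) \simeq (\mathcal{E}^B,\nabla^B)$ follows, because both connections are the pull-backs of the same scalar differential system whose fundamental solution is $I$.

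The main obstacle I expect is the Picard--Fuchs verification in the non-Fermat chain case: the complicated recursive structure of the I--function produced by Theorem~\ref{main}, with the twisting $t_j = t^{(-a_1)\cdots(-a_{j-1})}$ and broad contributions, creates a long product of $\Gamma$--ratios whose hypergeometric shuffles must be matched against the Picard--Fuchs operator above. A subtlety is also the Calabi--Yau hypothesis $d = \sum w_j$, which is precisely what makes the leading exponent in $z$ cancel and makes the Birkhoff factorisation well-defined, so that the mirror map $\tau$ is an honest embedding of $\Delta^*$ rather than a formal section; I would need to check that the leading non-trivial coefficient in the $z$-expansion of $I$ gives a local coordinate on $\Delta^*$ at $t = 0$, which for chain polynomials follows from an explicit inspection of the lowest-order term of the hypergeometric series.
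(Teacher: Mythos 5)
Your proposal follows essentially the same route as the paper: restrict the big I--function to the narrow line $h = t\,\e_{\grj^2}$, observe it lies on the Lagrangian cone, verify the Picard--Fuchs equation, and extract the mirror map $\tau$ and the J--function from the $z$--expansion by Givental's uniqueness. Two small corrections: the rescaling relating $I$ and $J$ is by the scalar $\omega_0(t)$ --- a function of $t$, not of $z$ --- and the broad contributions you anticipate wrestling with in the Picard--Fuchs verification actually vanish after the restriction $h = t\,\e_{\grj^2}$ (the product $\prod_{\delta_j < b < \fq_j k,\ \langle b\rangle = \langle \fq_j k\rangle} bz$ in \eqref{finalformulaI} kills every $e_{\grj^k}$ lying in a broad sector), so the check reduces to a direct transcription of the Fermat computation in \cite[Equation (57)]{LG/CY}.
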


\subsection{Structure of the paper}
In Sect.~\ref{section1}, we reformulate Kreuzer's and Krawitz's  explicit description \cites{Kreuzer2,Krawitz} of the state space of invertible polynomials using a natural bookkeeping device of decorated graphs, and we describe the moduli space of $W$-spin curves.
In Sect.~\ref{section2}, we present Polishchuk--Vaintrob's construction \cite{Polish1} of their algebraic class for the quantum singularity theory. 
In Sect.~\ref{contrib}, we introduce the notion of a recursive complex which in several cases allows us to compute Polishchuk--Vaintrob's class, see Theorem \ref{main}.
We also check the compatibility (Theorem \ref{compat}) between this class and FJRW virtual class for (almost) every invertible polynomials in every genus.
In Sect.~\ref{computations}, we cast our computation within Givental's quantization formalism to obtain the big I-function \eqref{bigI} and we prove the mirror symmetry statement (Theorem \ref{thmfinal}) for FJRW theory of chain polynomials.

~~

\noindent
\textbf{Acknowledgement.}
The author is extremely grateful to his Ph.D.~supervisor Alessandro Chiodo for motivating discussions and invaluable advice on this paper.
He thanks Yongbin Ruan and Alexander Polishchuk for encouraging him to look for a compatibility Theorem \ref{compat}.
A special thank to Polishchuk for his illuminating talks on matrix factorizations and the discussions that followed. 
Dimitri Zvonkine's remarks and suggestions have been very helpful. 
It is a pleasure to acknowledge Hiroshi Iritani for a fruitful discussion on mirror symmetry formalism.
He is also extremely grateful to the reviewers for very interesting and helpful comments to enhance the readability of the paper and to suggest him to add Theorem \ref{bigItheorem}.
Finally, he thanks his wife Sol\`ene Molle for her constant support in this work.


\section{Quantum singularity theory}\label{section1}
Any Landau--Ginzburg (LG) orbifold carries a cohomological field theory, first introduced by Fan--Jarvis--Ruan \cites{FJRW,FJRW2} inspired by Witten \cite{Witten}.
We have two motivations to focus on invertible polynomials with diagonal automorphisms: there is a complete description of the state space and a definition for the (conjectural) mirror LG orbifold.
In Sect.~\ref{section1.1}, we encode any invertible polynomial into an oriented graph; in Sect.~\ref{sectionstatespace}, we write a basis of the state space by means of decorations on such graph.
In Sect.~\ref{sectionmodulispace}, we introduce the moduli space of $W$-spin curves and the quantum singularity theory.

\subsection{Conventions and notations}\label{QST}\label{term}
With respect to FJRW theory, two cohomological classes have been recently introduced: the FJRW virtual class by Fan, Jarvis, and Ruan \cites{FJRW,FJRW2} on one side and an algebraic class by Polishchuk and Vaintrob \cite{Polish1} on the other side.
Fan, Jarvis, and Ruan view this class analytically and follow Witten's initial sketched idea \cite{Witten} formalized for A-singularities by Mochizuki \cite{Moch}.
Polishchuk and Vaintrob provide an algebraic construction generalizing their previous construction and that of Chiodo \cite{ChiodoJAG} in the A-singularity case.
So far, little is known on the compatibility between these two approaches: Faber, Shadrin, and Zvonkine's work \cite{FSZ} may be regarded as a check of compatibility of all approaches in the A-singularities case, Polishchuk and Vaintrob push forward this check to all simple singularities in \cite{Polish1}.
Chang, Li, and Li prove the match when only narrow entries occur in \cite[Theorem 1.2]{Li2}.
Theorem \ref{compat} establishes the compatibility for (almost) every invertible polynomials with maximal group of symmetries.

Both cycles lead to cohomological field theories, see \cite[Theorem 4.1.8]{FJRW} for the FJRW virtual cycle and \cite[Sect.~5]{Polish1} for Polishchuk and Vaintrob's cycle.
Therefore, we also call Polishchuk and Vaintrob's class a virtual class and denote it by $\cvirPV$, while $\cvirFJRW$ stands for the FJRW virtual class.
Quantum invariant with the upper-script PV (resp.~FJRW) refers to an intersection number against $\cvirPV$ (resp.~$\cvirFJRW$).

We work in the algebraic category and over $\CC$.
All stacks are proper Deligne--Mumford stacks; we use also the term ``orbifold'' for this type of stacks.
We denote orbifolds by curly letters, e.g.~$\cC$ is an orbifold curve and the scheme $C$ is its coarse space.
We recall that vector bundles are coherent locally free sheaves and that the symmetric power of a two-term complex is the complex
\begin{equation*}
\cS^k\([ A \rightarrow B ]\)  =  [ \cS^k A \rightarrow \cS^{k-1} A \otimes B \rightarrow \dotso \rightarrow A \otimes \Lambda^{k-1} B \rightarrow \Lambda^k B ]
\end{equation*}
with morphisms induced by $A \rightarrow B$.

All along the text, the index $i$ varies from $1$ to $n$ and refers exclusively to the marked points of a curve whereas the index $j$ varies from $1$ to $N$ and corresponds to the variables of the polynomial.
We represent tuples by overlined notations, e.g.~$\overline{\gamma}=(\gamma(1),\dotsc,\gamma(n))$, or by underlined notations, e.g.~$\underline{p}=(p_1,\dotsc,p_N)$.

\subsection{Landau--Ginzburg orbifold}\label{section1.1}
Let $w_1,\dotsc,w_N$ be coprime positive integers and let $W$ be a quasi-homogeneous polynomial of degree $d$ with weights $w_1,\dotsc,w_N$.
For any $\lambda,x_1,\dotsc,x_N \in \CC$, we have
\begin{equation*}
W(\lambda^{w_1} x_1,\dotsc,\lambda^{w_N} x_N) = \lambda^d  W(x_1,\dotsc,x_N).
\end{equation*}
We call charges of the polynomial $W$ the rational numbers $\fq_j := w_j/d$ for all $j$.
The group of diagonal automorphisms of the polynomial $W$ consists of diagonal matrices $\textrm{diag}(\lambda_1,\dotsc,\lambda_N)$ satisfying
\begin{equation*}
W(\lambda_1 x_1,\dotsc,\lambda_N x_N)=W(x_1,\dotsc,x_N) \quad \textrm{for every } (x_1,\dotsc,x_N) \in \CC^N.
\end{equation*}
We write $\textrm{Aut}(W)$ for this group;
it contains the grading element: the matrix
\begin{equation}\label{gradingelement}
\grj := \textrm{diag}(e^{2 \ii \pi \fq_1},\dotsc,e^{2 \ii \pi \fq_N})~, \quad \fq_j := \frac{w_j}{d}
\end{equation}
of order $d$. Any subgroup containing this matrix is called admissible.

We say that a quasi-homogeneous polynomial $W$ is non-degenerate if it has an isolated singularity at the origin and if its weights are uniquely defined.
Then the dimension of the Jacobian ring
\begin{equation*}
\cQ_W := \CC \left[ x_1,\dotsc,x_N \right] / \left( \partial_1 W,\dotsc, \partial_N W \right)
\end{equation*}
is finite over $\CC$ and so is the group $\textrm{Aut}(W)$.
As a stack, the zero locus $\left\lbrace W=0 \right\rbrace$ of a non-degenerate polynomial $W$ is a smooth hypersurface within the weighted projective space $\PP(w_1,\dotsc,w_N)$. By the adjunction formula, its canonical bundle vanishes when $d=w_1+\dotsb+w_n$.
Under this condition we refer to $W$ as a Calabi--Yau polynomial.

\begin{rem}\label{SL}
Under the Calabi--Yau condition, the group $\mathrm{SL}(W)$ of diagonal automorphisms with determinant $1$ is admissible and we set $\SL(W):=\mathrm{SL}(W)/\langle \grj \rangle$.
\end{rem}

\begin{dfn}
A Landau--Ginzburg (LG) orbifold is a pair $(W,G)$ with $W$ a non-degenerate (quasi-homogeneous) polynomial and $G$ an admissible group.
We regard $(W,G)$ as a morphism $W \colon \left[ \CC^N/G \right] \rightarrow \CC$ where $\left[ \CC^N/G \right]$ is a quotient stack.
\end{dfn}

Consider a non-degenerate polynomial
\begin{equation*}
W(x_1,\dotsc,x_N) = c_1 x_1^{m_{1,1}} \dotsm x_N^{m_{1,N}} + \dotsb + c_R x_1^{m_{R,1}} \dotsm x_N^{m_{R,N}},
\end{equation*}
and write $E_W:=(m_{k,j})$ for the matrix of exponents.
For $R=N$, this matrix is invertible (the weights are uniquely defined), hence we have the following definition.

\begin{dfn}[Berglund--H\"ubsch, \cite{Hubsch}]\label{invertiblematrix}
An invertible polynomial is a non-degenerate (quasi-homogeneous) polynomial with as many variables as monomials.
\end{dfn}

Up to a change of coordinates of $\CC^N$, we may assume $c_1 = \dotsb = c_N = 1$, so that an invertible polynomial is determined by the matrix of exponents.
According to Kreuzer--Skarke \cite{Kreuzer},
every invertible polynomial is a Thom--Sebastiani (TS) sum of invertible polynomials, with disjoint sets of variables, of the following three types
\begin{equation}\label{ThomSebastiani}
\begin{array}{lll}
\textrm{Fermat:} & \qquad x^{a+1} & \\
\textrm{chain of length } c: & \qquad x_1^{a_1}x_2+\dotsb+x_{c-1}^{a_{c-1}} x_c+x_c^{a_c+1} & (c \geq 2), \\
\textrm{loop of length } l: & \qquad x_1^{a_1}x_2+\dotsb+x_{l-1}^{a_{l-1}} x_l+x_l^{a_l}x_1 & (l \geq 2). \\
\end{array}
\end{equation}
Let us point out to the reader our non-standard choice for the exponent of Fermat polynomial and for the last exponent of chain polynomials.
This choice will become clear  in the next paragraph.
Once for all, we assume 
\begin{equation}\label{restrictions}
\textrm{every diagonal entry of } E_W \textrm{ is greater or equal to } 2.
\end{equation}
This is a slight restriction, especially for Calabi--Yau polynomials, where only the polynomials $xy+y^k$ are excluded.

We attach to the invertible polynomial $W$ an oriented graph $\Gamma_W$ (possibly containing loops, i.e.~oriented edges starting and ending at the same vertex), whose vertices $v_1,\dotsc,v_N$, in one-to-one correspondence with the variables, are decorated by a positive integer via $f_W \colon v_j \mapsto a_j$ as follows.
Given an index $j$, there is a unique index $t(j)$, possibly equal to $j$, such that
\begin{equation*}
x_j^{a_j}x_{t(j)} \textrm{ is a monomial of } W;
\end{equation*}
we draw an arrow from $v_j$ to $v_{t(j)}$, set $f_W(v_j) := a_j$ and say that $v_{t(j)}$ follows $v_j$.
There is a bijection between connected components and the terms of the TS sum.

\begin{center}
\begin{tikzpicture}[scale=0.5]
\tikzstyle{sommet}=[circle,draw,thick]
\draw (0:2) node[sommet](A1){}
 (60:2) node[sommet](A2){}
 (120:2) node[sommet](A3){}
 (180:2) node[sommet](A4){}
 (240:2) node[sommet](A5){}
 (300:2) node[sommet](A6){};

\draw (0:2.8) node{$a_6$}
 (60:2.8) node{$a_7$}
 (120:2.8) node{$a_8$}
 (180:2.8) node{$a_9$}
 (240:2.8) node{$a_{10}$}
 (300:2.8) node{$a_{11}$};

\draw[->,>=stealth] (A1) to[bend right=20] (A2);
\draw[->,>=stealth] (A2) to[bend right=20] (A3);
\draw[->,>=stealth] (A3) to[bend right=20] (A4);
\draw[->,>=stealth] (A4) to[bend right=20] (A5);
\draw[->,>=stealth] (A5) to[bend right=20] (A6);
\draw[->,>=stealth] (A6) to[bend right=20] (A1);

\draw (-6:4.8) node{$s(6)=11$}
 (6:4.6) node{$t(6)=7$};

\draw (0,-3) node{loop};

\draw (-12,0) node[sommet](A1){}
 (-10,0) node[sommet](A2){}
 (-8,0) node[sommet](A3){}
 (-6,0) node[sommet](A4){};

\draw (-12,0.6) node{$a_2$}
 (-10,0.6) node{$a_3$}
 (-8,0.6) node{$a_4$}
 (-6,0.6) node{$a_5$};

\draw[->,>=stealth] (A1) to[bend right=0] (A2);
\draw[->,>=stealth] (A2) to[bend right=0] (A3);
\draw[->,>=stealth] (A3) to[bend right=0] (A4);
\draw[->,>=stealth](A4.south east)..controls +(-20:2cm) and +(20:2cm)..(A4.north east);

\draw (-12,-1) node{$s(2)=-\infty$}
 (-6,-1) node{$t(5)=5$};

\draw (-9,-3) node{chain};

\draw (-15,-0.6) node{$a_1$};
\draw (-15,0) node[sommet](A){};
\draw[->,>=stealth](A.north east)..controls +(70:2cm) and +(110:2cm)..(A.north west);

\draw (-15,-3) node{Fermat};
\end{tikzpicture}
\end{center}

\noindent
As a consequence, it is convenient to write any invertible polynomial as
\begin{equation}\label{invertchain}
W = x_1^{a_1} x_{t(1)} +\dotsb+ x_N^{a_N} x_{t(N)}
\end{equation}
and to define a sort of going-back function
\begin{equation}\label{previousnext}
\begin{array}{lccl}
s \colon & \left\lbrace 1,\dotsc,N \right\rbrace & \rightarrow & \left\lbrace -\infty, 1,\dotsc,N \right\rbrace \\
         & k & \mapsto & \left\lbrace \begin{array}{lcl}
         j & \qquad    \qquad   &\textrm{ if } t(j)=k \textrm{ and } j \neq k, \\
         -\infty & \qquad \qquad &\textrm{ otherwise.}
         \end{array}\right. 
\end{array}
\end{equation}


\subsection{State space}\label{sectionstatespace}
We focus on the LG orbifold $(W,\mathrm{Aut}(W))$ where $W$ is an invertible polynomial, and we give a basis of the state space.
This case embodies the relevant information for the LG mirror symmetry Theorem \ref{thmfinal} for Calabi--Yau hypersurfaces of chain-type.
In genus zero, it also determines the $\mathrm{Aut}(W)$-invariant part of the LG orbifold $(W,G)$ for an arbitrary admissible group $G$.

Following the established FJRW terminology, for any $\gamma \in \mathrm{Aut}(W)$ there is a dichotomy between broad and narrow variables (or vertices), where the set of broad variables is
\begin{equation}\label{bgamma}
\bB_\gamma = \left\lbrace x_j \left| \right. \gamma_j =1 \right\rbrace.
\end{equation}
In the graph $\Gamma_W$, any broad vertex is followed by a broad vertex.
The restriction $W_\gamma$ of the polynomial $W$ to the invariant space $(\aA^N)^\gamma$ under the action of $\gamma$ yields
\begin{equation*}
\st_\gamma:=(\cQ_{W_\gamma} \otimes d\underline{x}_\gamma)^{\mathrm{Aut}(W)} \qquad \textrm{with } d\underline{x}_\gamma:=\bigwedge_{x_j \in \bB_\gamma} dx_j.
\end{equation*}
This is the invariant part of $\cQ_{W_\gamma} \otimes d\underline{x}_\gamma$ under the induced action of $\mathrm{Aut}(W)$.

\begin{dfn}
The A-state space for the LG orbifold $(W,\mathrm{Aut}(W))$ is the vector space
\begin{equation*}
\st = \bigoplus_{\gamma \in \mathrm{Aut}(W)} \st_{\gamma}
\end{equation*}
equipped with a bidegree and a natural non-degenerate symmetric bilinear pairing, an orbifolded version of the residue pairing.
We refer to \cite[Equation (4)]{LG/CY} or \cite[Equation (5.12)]{Polish1} for further details, and to \eqref{gradingst} and \eqref{pairingcomput} for expressions with respect to a chosen basis.
\end{dfn}

We now show how certain decorations of $(\Gamma_W,\bB_\gamma)$ can be used as a bookkeeping device classifying vectors of the A-state space.
A decoration of the graph $\Gamma_W$ is simply a subset $\rR_\gamma$ of $\bB_\gamma$. 
On the graph, we represent the variables contained in $\rR_\gamma$ by a crossed vertex, see the figure below.
To fix ideas, a variable (or a vertex) can be
\begin{enumerate}
\item[(a)] narrow,
\item[(b)] broad and crossed, or
\item[(b')] broad and uncrossed.
\end{enumerate}
We lighten terminology by omitting ``broad'' and writing ``crossed'' and ``uncrossed''.

\begin{dfn}
A decoration $\rR_\gamma$ is admissible if every uncrossed vertex is followed by a crossed vertex and every crossed vertex is followed by itself or by an uncrossed vertex (every vertex with a loop is crossed).
A decoration $\rR_\gamma$ is balanced if in each connected component of the graph there are as many uncrossed vertices as crossed vertices.
\end{dfn}

For an admissible decoration, one alternates between uncrossed and crossed vertices, and if the decoration is balanced, the number of broad vertices in each connected component is even.

\begin{center}
\begin{tikzpicture}[scale=0.5]
\tikzstyle{sommet}=[circle,draw,thick]
\draw (6,0) ++(0:2) node[sommet](A1){}

(6,0) ++ (60:2) node[sommet](A2){}
(6,0) ++ (60:2) ++(-0.25,0.25)--++(0.5,-0.5)
(6,0) ++ (60:2) ++(-0.25,-0.25)--++(0.5,0.5) 

(6,0) ++ (120:2) node[sommet](A3){}
 
(6,0) ++ (180:2) node[sommet](A4){}
(6,0) ++ (180:2) ++(-0.25,0.25)--++(0.5,-0.5)
(6,0) ++ (180:2) ++(-0.25,-0.25)--++(0.5,0.5) 
 
(6,0) ++ (240:2) node[sommet](A5){}
 
(6,0) ++ (300:2) node[sommet](A6){}
(6,0) ++ (300:2) ++(-0.25,0.25)--++(0.5,-0.5)
(6,0) ++ (300:2) ++(-0.25,-0.25)--++(0.5,0.5);

\draw[->,>=stealth] (A1) to[bend right=20] (A2);
\draw[->,>=stealth] (A2) to[bend right=20] (A3);
\draw[->,>=stealth] (A3) to[bend right=20] (A4);
\draw[->,>=stealth] (A4) to[bend right=20] (A5);
\draw[->,>=stealth] (A5) to[bend right=20] (A6);
\draw[->,>=stealth] (A6) to[bend right=20] (A1);

\draw (0,-3) node{not admissible,};
\draw (0,-3.7) node{not balanced};

\draw (0:2) node[sommet](A1){}
(0:2) ++(-0.25,0.25)--++(0.5,-0.5)
(0:2) ++(-0.25,-0.25)--++(0.5,0.5)
 
(72:2) node[sommet](A2){}

(144:2) node[sommet](A3){}
(144:2) ++(-0.25,0.25)--++(0.5,-0.5)
(144:2) ++(-0.25,-0.25)--++(0.5,0.5)
 
(216:2) node[sommet](A4){}

(288:2) node[sommet](A5){}
(288:2) ++(-0.25,0.25)--++(0.5,-0.5)
(288:2) ++(-0.25,-0.25)--++(0.5,0.5);

\draw[->,>=stealth] (A1) to[bend right=20] (A2);
\draw[->,>=stealth] (A2) to[bend right=20] (A3);
\draw[->,>=stealth] (A3) to[bend right=20] (A4);
\draw[->,>=stealth] (A4) to[bend right=20] (A5);
\draw[->,>=stealth] (A5) to[bend right=20] (A1);

\draw (6,-3) node{admissible,};
\draw (6,-3.7) node{balanced};

\draw (-14,0) node[sommet,fill=gray](A0){}
 (-12,0) node[sommet,fill=gray](A1){}
 (-10,0) node[sommet](A2){}
 ++(-0.25,0.25)--++(0.5,-0.5)
 (-10,0) ++(-0.25,-0.25)--++(0.5,0.5)
 (-8,0) node[sommet](A3){}
 (-6,0) node[sommet](A4){}
 ++(-0.25,0.25)--++(0.5,-0.5)
 (-6,0) ++(-0.25,-0.25)--++(0.5,0.5);

\draw[->,>=stealth] (A0) to[bend right=0] (A1);
\draw[->,>=stealth] (A1) to[bend right=0] (A2);
\draw[->,>=stealth] (A2) to[bend right=0] (A3);
\draw[->,>=stealth] (A3) to[bend right=0] (A4);
\draw[->,>=stealth](A4.south east)..controls +(-20:2cm) and +(20:2cm)..(A4.north east);

\draw (-10,-3) node{admissible,};
\draw (-10,-3.7) node{not balanced};
\end{tikzpicture}
\end{center}

\noindent
To any admissible and balanced decoration $\rR_\gamma$, we associate the element
\begin{equation}\label{basisH}
\begin{split}
e(\rR_\gamma) := & \prod_{k=1}^m \biggl( \prod_{x_j \in (\bB_\gamma \backslash \rR_\gamma)\cap L_k} a_j x_j^{a_j-1} - \prod_{x_j \in \rR_\gamma \cap L_k} -x_j^{a_j-1} \biggr) \cdot \\
                 & \prod_{k=1}^{m'} \biggl( \prod_{x_j \in (\bB_\gamma \backslash \rR_\gamma)\cap C_k} a_j x_j^{a_j-1} \biggr) \cdot \bigwedge_{x_j \in \bB_\gamma} d x_j \\
\end{split}
\end{equation}
of the space $\st_\gamma$, with $a_j:=f_W(v_j)$.
In this formula, the sets of vertices $L_1,\dotsc,L_m$ correspond to the loop-type (connected) components of the graph $\Gamma_W$, the sets $C_1,\dotsc,C_{m'}$ correspond to the chain-type components and the order of the variables in the wedge product is taken in the direction of the arrows of the graph, always starting with an uncrossed vertex.
Notice the absence of Fermat components in \eqref{basisH}, as the vertex of a Fermat component is narrow for every admissible and balanced decorations.
A straightforward argument using the anti-symmetric property of the wedge product shows that \eqref{basisH} is well-defined (it does not matter which uncrossed vertex, nor which connected component, one starts from).

By \cite[Lemma 1.7]{Krawitz}, the set of all the elements $e(\rR_\gamma)$, with a diagonal automorphism $\gamma$ and an admissible and balanced decoration $\rR_\gamma$, forms a basis of the state space,
\begin{equation}\label{basisst}
\st = \bigoplus_{\gamma \in \mathrm{Aut}(W)} \bigoplus_{\substack{\rR_\gamma \subset \bB_\gamma \\
\textrm{admissible, balanced}}} \CC \cdot e(\rR_\gamma).
\end{equation}
By convention, we take $e(\rR_\gamma)=0$ for every non-balanced and admissible decoration; we never consider non-admissible decorations.

\begin{rem}
In \cite[Lemma 1.7]{Krawitz}, the basis is given by the elements
\begin{equation*}
\prod_{x_j \in \bB_\gamma \backslash \rR_\gamma} x_j^{a_j-1} \cdot \bigwedge_{x_j \in \bB_\gamma} d x_j,
\end{equation*}
with a diagonal automorphism $\gamma$ and an admissible and balanced decoration $\rR_\gamma$.
As for every loop-type components $L_1,\dotsc,L_m$ we have
\begin{equation*}
\prod_{x_j \in L_k} a_j \neq 1,
\end{equation*}
then the set of all the elements $e(\rR_\gamma)$ forms a basis of the state space.
The reason why we prefer the basis given by \eqref{basisH} will become clear in Sect.~\ref{2.3}, where we construct some matrix factorizations with Chern characters equal to \eqref{basisH} (see \eqref{ChernKKK}).
Another reason is that the matrix of the bilinear pairing of $\st$ is easy to compute in the basis given by \eqref{basisH}.
\end{rem}

The grading of the state space is given by
\begin{equation}\label{gradingst}
\deg (e(\rR_\gamma)) = \textrm{card} \left\lbrace 1 \leq j \leq N \left| \right. \gamma_j=1 \right\rbrace  + 2 \sum_{j=1}^N (\Gamma_j-\fq_j),
\end{equation}
where $\Gamma_j$ is determined by $\gamma_j = \exp (2 \ci \pi \Gamma_j)$, $\Gamma_j \in \left[ 0,1\right[$.
The bilinear pairing is given by
\begin{equation}\label{pairingcomput}
\bigl( e(\rR_\gamma) , e(\rR'_{\gamma'}) \bigr) = \left\lbrace \begin{array}{cl}
\displaystyle \prod_{x_j \in \bB_\gamma \backslash (\rR_\gamma \cup \rR'_{\gamma'})} (-a_j) & \textrm{if } \gamma'=\gamma^{-1}, \\
0 & \textrm{otherwise,}
\end{array}\right. 
\end{equation}
with $\rR'_{\gamma'}$ an admissible and balanced decoration for $\gamma'$ and $\bB_\gamma=\bB_{\gamma^{-1}}$.

\begin{rem}
By \cite[Lemma 6.1.1]{Polish1}, a computation of the three-point Polishchuk and Vaintrob's correlators yields the bilinear pairing, so that we will deduce equation \eqref{pairingcomput} from Theorem \ref{main}.
It seems difficult to obtain this explicit formula directly from the definition via the residue pairing \cite[Equation (4)]{LG/CY} or via the canonical pairing on matrix factorizations \cite[Equation (2.24)]{Polish1}.
\end{rem}



\subsection{Moduli space}\label{sectionmodulispace}
A genus-$g$ orbifold (or twisted) curve $\cC$ with marked points is a connected, proper, and one-dimensional Deligne--Mumford stack whose coarse space $C$ is a genus-$g$ nodal curve, and such that the morphism $\rho \colon \cC \rightarrow C$ is an isomorphism away from the nodes and the marked points.
Any marked point or node can have a non-trivial stabilizer equal to a finite cyclic group.
We focus on smoothable orbifold curves: orbifold curves whose local picture at the node is $\left[ \left\lbrace xy=0 \right\rbrace / \mu_r \right]$ with
\begin{equation*}
\zeta_r \cdot (x,y) = (\zeta_r x , \zeta_r^{-1} y).
\end{equation*}
Throughout this section we set $r$ to be the smallest integer $l$ such that $\gamma^l=1$ for every element $\gamma \in \textrm{Aut}(W)$,
and further restrict to $r$-stable curves, i.e.~smoothable orbifold curves whose stabilizers (at the nodes and at the markings) have fixed order $r$ and whose coarse nodal pointed curve is stable.

\begin{dfn}
The moduli space $\sS_{g,n}$ classifies all $W$-spin curves
\begin{equation*}
(\cC; \sigma_1,\dotsc,\sigma_n;\cL_1,\dotsc,\cL_N;\phi_1,\dotsc,\phi_N),
\end{equation*}
where $(\cC; \sigma_1,\dotsc,\sigma_n)$ is an $r$-stable genus-$g$ curve, $\cL_1,\dotsc,\cL_N$ are line bundles on the curve $\cC$ and
\begin{equation}\label{spinstruct}
\phi_j \colon \cL_j^{\otimes a_j} \otimes \cL_{t(j)} \longrightarrow \omega_{\cC,\log} := \omega_{\cC} (\sigma_1 + \dotsc + \sigma_n)
\end{equation}
are isomorphisms.
\end{dfn}

\begin{rems*}
The twisted canonical line bundle $\omega_{\cC,\log}$ equals the pull-back via $\rho$ of the line bundle $\omega_C (\sigma_1 + \dotsc + \sigma_n)$ on the coarse curve.
A preliminary less general definition of FJRW moduli objects has been already given in Sect.~\ref{intro4}. It involved the moduli space of the LG orbifold $(W,\mu_d)$, where every line bundle $\cL_j$ comes from the same line bundle $\cL$ satisfying $\cL^d \simeq \omega_{\cC,\log}$; this moduli space is naturally embedded in $\sS_{g,n}$.
The notion of $W$-spin curves can be generalized to families over a base scheme $S$ and the moduli space $\sS_{g,n}$ is therefore a smooth and proper Deligne--Mumford stack. It is finite over the moduli space $\overline{\cM}_{g,n}$ of stable curves.
Moreover, this definition of $\sS_{g,n}$, owing to \cites{FJRW,Vistoli2}, is compatible with the definition via $\Gamma$-spin curves of \cite[Proposition 3.2.2]{Polish1}.
\end{rems*}

Locally at the marked point $\sigma_i$ of a $W$-spin curve, the group of $r$th-roots of unity acts on the line bundle $\cL_j$ by 
\begin{equation}\label{multiplicities}
\zeta_r \cdot (x,\xi) = (\zeta_r x, \zeta_r^{m_j(i)} \xi) ~, \quad \textrm{with } m_j(i) \in \left\lbrace 0 , \dotsc, r-1 \right\rbrace
\end{equation}
called the multiplicity of the line bundle $\cL_j$ at the marked point $\sigma_i$.
We write $\gamma_j(i):= \zeta_r^{m_j(i)}$ and $\gamma(i):=(\gamma_1(i),\dotsc,\gamma_N(i)) \in (\mathbb{U}(1))^N$ to define the type
\begin{equation*}
\overline{\gamma}:=(\gamma(1),\dotsc,\gamma(n)) \in (\textrm{Aut}(W))^n
\end{equation*}
of a $W$-spin curve, yielding a decomposition
\begin{equation*}
\sS_{g,n} = \bigsqcup_{\overline{\gamma} \in (\textrm{Aut}(W))^n} \sS_{g,n}(\gamma(1),\dotsc,\gamma(n)),
\end{equation*}
where $\sS_{g,n}(\overline{\gamma})$ is an empty component when the selection rule
\begin{equation}\label{selecrule}
\gamma(1) \dotsm \gamma(n) = \grj^{2g-2+n}
\end{equation}
is not satisfied (see \cite[Proposition 2.2.8]{FJRW}).
In each component lies a homology cycle whose Poincar\'e dual is called the FJRW virtual class
\begin{equation*}
\cvirFJRW \colon \st^{\otimes n} \rightarrow H^*(\sS_{g,n}).
\end{equation*}
The cohomological degree of $\cvirFJRW(e(\rR_{\gamma(1)}), \dotsc, e(\rR_{\gamma(n)}))_{g,n}$ is
\begin{equation}\label{cohdegvir}
\deg (e(\rR_{\gamma(1)})) + \dotsb + \deg (e(\rR_{\gamma(n)})) + 2 \hat{c}_W \cdot (g-1),
\end{equation}
where $\hat{c}_W := \sum_{j=1}^N (1-2 \fq_j)$ is the central charge of the polynomial $W$.

Besides, Polishchuk and Vaintrob constructed another virtual class
\begin{equation*}
\cvirPV \colon \st^{\otimes n} \rightarrow H^*(\sS_{g,n})
\end{equation*}
with the same cohomological degree.
The relation between $\cvirPV$ and $\cvirFJRW$ is explained in Sect.~\ref{term}.
We do not give a definition for the FJRW virtual class; the interested reader is referred to \cites{FJRW,FJRW2}.
Nevertheless, Sect.~\ref{section2} is entirely devoted to the construction of $\cvirPV$.
As already mentioned in the introduction, in the narrow sector (where $\cvirFJRW=\cvirPV$) and under the genus-zero concavity property, that is
\begin{equation*}
H^0(\cC,\cL_j)=0 \qquad \textrm{for all }  j \textrm{ and all genus-zero $W$-spin curves } \cC,
\end{equation*}
the genus-zero virtual class $\cvirFJRW$ is determined by linearity via
\begin{equation*}
c_\textrm{top}(R^1\pi_* (\cL_1 \oplus \dotsb \oplus \cL_N)) \quad \textrm{(Euler class of a vector bundle),}
\end{equation*}
where $\cL_1, \dotsc,\cL_N$ are the universal line bundles on the universal space of $\sS_{0,n}(\overline{\gamma})$ and where $\pi$ is the projection to $\sS_{0,n}(\overline{\gamma})$.

Each virtual class forms a cohomological field theory, see \cite[Theorem 4.1.8]{FJRW} for $\cvirFJRW$ and \cite[Sect.~5]{Polish1} for $\cvirPV$, and the cohomological field theory for $\cvirFJRW$ is called FJRW theory.
The invariants (also called correlators) are respectively
\begin{equation}\label{invariants}
\langle \tau_{b_1}(u_1) \dotsm \tau_{b_n}(u_n) \rangle^\mathrm{FJRW}_{g,n} := \int_{\overline{\cM}_{g,n}} \psi_1^{b_1} \dotsm \psi_n^{b_n} \frac{r^g}{\mathrm{deg} (\textrm{o})}  \textrm{o}_*\cvirFJRW(u_1,\dotsc,u_n)_{g,n},
\end{equation}
\begin{equation}\label{invariantsPV}
\langle \tau_{b_1}(u_1) \dotsm \tau_{b_n}(u_n) \rangle^\mathrm{PV}_{g,n} := \int_{\overline{\cM}_{g,n}} \psi_1^{b_1} \dotsm \psi_n^{b_n} ~ \frac{r^g}{\mathrm{deg} (\textrm{o})} ~ \textrm{o}_*\cvirPV(u_1,\dotsc,u_n)_{g,n}
\end{equation}
with $\psi_1,\dotsc,\psi_n$ the usual psi-classes in $H^2(\overline{\cM}_{g,n})$, $u_1,\dotsc,u_n \in \st$ and
\begin{equation*}
\textrm{o} \colon \sS_{g,n} \rightarrow \overline{\cM}_{g,n}
\end{equation*}
the morphism forgetting $W$-spin and orbifold structures.
We omit $\tau_{b_i}$ when $b_i=0$.

\section{Virtual class from matrix factorizations}\label{section2}
By focusing on the relevant LG orbifold $(W,\Aut(W))$, we illustrate the algebraic construction of the virtual class from \cite{Polish1}.
In Sect.~\ref{2.2}, we set-up some material for the treatment of broad marked points.
In Sect.~\ref{2.1}, we present matrix factorizations of Koszul type, which are ideally suited to the LG orbifolds.
In Sect.~\ref{polishfundmat}, we introduce Polishchuk--Vaintrob's \MF $\PV$, a universal object over the moduli stack.
In Sect.~\ref{2.3}, we construct Koszul \MFs $\kK(\rR_{\overline{\gamma}})$ which lift every element $e(\rR_{\overline{\gamma}})$ of the state space.
In Sect.~\ref{cvirinvertible}, by coupling  $\PV$ with $\kK(\rR_{\overline{\gamma}})$ we get a two-periodic complex and ultimately a class in cohomology: the virtual class appearing in \eqref{invariantsPV}.
We point out that this two-periodic complex may be regarded as a generalization of Chiodo's complex (see \cite{ChiodoJAG}) yielding the virtual class in the case of Fermat monomial $x^r$, i.e.~$r$-spin curves.

\subsection{Combinatorics of $W$-spin curves}\label{2.2}
Consider a component $\sS_{g,n}(\overline{\gamma})$ of type $\overline{\gamma}=(\gamma(1),\dotsc,\gamma(n)) \in \left( \textrm{Aut}(W) \right)^n$.
We write

~~

\setlength{\unitlength}{1cm}
\begin{picture}(3,2)(-4.5,0)
\thicklines
\put(0,2){\line(1,0){3}}
\put(3,0){\line(0,1){2}}
\put(0,0){\line(0,1){2}}
\put(0,0){\line(1,0){3}}
\put(0,1.5){\line(1,0){3}}
\put(0.5,0){\line(0,1){2}}
\put(0.5,1.5){\line(-1,1){0.5}}

\put(0.09,1.55){$i$}
\put(0.33,1.75){$j$}

\put(0.1,1.1){$\sigma_1$}
\put(0.2,0.5){$\vdots$}
\put(0.1,0.1){$\sigma_n$}

\put(0.6,1.63){$\cL_1$}
\put(1.5,1.73){$\dots$}
\put(2.4,1.63){$\cL_N$}

\put(0.6,1.1){$\gamma_{1}(1)$}
\put(0.6,0.1){$\gamma_{1}(n)$}
\put(2.07,1.1){$\gamma_{N}(1)$}
\put(2.07,0.1){$\gamma_{N}(n)$}
\put(1.55,1.2){$\dots$}
\put(1.55,0.2){$\dots$}
\put(0.8,0.5){$\vdots$}
\put(2.6,0.5){$\vdots$}
\end{picture}
\\
and keep record of the sets
\begin{equation*}
\bB_{\overline{\gamma}} = \left\lbrace (\sigma_i,x_j) \left| \right. \gamma_j(i) = 1 \right\rbrace , ~~
\bB_{\overline{\gamma}_j} = \left\lbrace \sigma_i \left| \right.  (\sigma_i,x_j) \in \bB_{\overline{\gamma}} \right\rbrace , ~~ \bB_{\gamma(i)} = \left\lbrace x_j \left| \right.  (\sigma_i,x_j) \in \bB_{\overline{\gamma}} \right\rbrace.
\end{equation*}
The set $\bB_{\overline{\gamma}}$ is an analog of \eqref{bgamma} with marked points and labels the coordinates $\left\lbrace x_j(i)\right\rbrace_{(\sigma_i,x_j) \in \bB_{\overline{\gamma}}}$ of the affine space
\begin{equation}\label{aAffinespace}
\aA^{\overline{\gamma}}:= \prod_{i=1}^n \left( \aA^N \right)^{\gamma(i)},
\end{equation}
where $\left( \aA^N \right)^{\gamma(i)}$ denotes the invariant part under $\gamma(i)$.
We consider the invertible polynomial $W_{\overline{\gamma}}$ on $\aA^{\overline{\gamma}}$ given by
\begin{equation*}
W_{\overline{\gamma}} := W_{\gamma(1)}(x_1(1),\dotsc,x_N(1)) + \dotsb + W_{\gamma(n)}(x_1(n),\dotsc,x_N(n)),
\end{equation*}
where $W_{\gamma(i)}$ stands for the invariant part of $W$ under the action of $\gamma(i)$.
Observe that we obtain the graph $\Gamma_{W_{\gamma(i)}}$ from $\Gamma_W$ by erasing all narrow vertices for $\gamma(i)$ and all edges starting or ending at a narrow vertex.
Then the graph $\Gamma_{W_{\overline{\gamma}}}$ is the disjoint union of $\Gamma_{W_{\gamma(1)}}, \dotsc, \Gamma_{W_{\gamma(n)}}$.
For any scheme $S$, we set
\begin{equation*}
\cO_S^{\overline{\gamma}}:= \bigoplus_{(\sigma_i,x_j) \in \bB_{\overline{\gamma}}} \cO \quad = \quad \bigoplus_{j=1}^N \cO_S^{\overline{\gamma}_j} \quad \textrm{with canonical basis } \left\lbrace e_j(i) \right\rbrace_{(\sigma_i,x_j) \in \bB_{\overline{\gamma}}} .
\end{equation*}

\subsection{Category of matrix factorizations}\label{2.1}
Let $\bw$ be a function on a stack $X$.
A matrix factorization $E:=(E,\delta_E)$ of potential $\bw$ is a $\ZZ/2$-graded vector bundle $E = E_0 \oplus E_1$ on $X$ together with an endomorphism $\delta_E$ satisfying $E_0 \leftrightarrows E_1$ and $\delta_E \circ \delta_E = \bw \cdot \textrm{id}_E$.
The category $\mathrm{MF}(X,\bw)$ of \MFs of $\bw$ on $X$ is a dg-category.
The tensor product of a \MF of $\bw$ with a \MF of $\bw'$ yields a \MF of $\bw+\bw'$;
the dual of a \MF of $\bw$ is a \MF of $-\bw$.
A \MF of $\bw=0$ is a two-periodic complex and it makes sense to look at its cohomology.


Let $V$ be a vector bundle on a stack $X$ and $\alpha \in H^0(X,V)$, $\beta \in H^0(X,V^\vee)$ be global sections whose pairing equals $\beta(\alpha)=\bw$.

\begin{dfn}
The Koszul \MF $\left\lbrace \alpha , \beta \right\rbrace$ of $\bw$ on $X$ consists of a $\ZZ/2$-graded vector bundle $\left\lbrace \alpha , \beta \right\rbrace_0 \oplus \left\lbrace \alpha , \beta \right\rbrace_1$,
\begin{displaymath}
\begin{array}{lclcl}
\left\lbrace \alpha , \beta \right\rbrace_0 &=& \bigwedge_{\textrm{even}} V &=& \cO_X \oplus \bigwedge^2 V \oplus \dotsb \\
\left\lbrace \alpha , \beta \right\rbrace_1 &=& \bigwedge_{\textrm{odd}} V &=& V \oplus \bigwedge^3 V \oplus \dotsb,
\end{array}
\end{displaymath}
together with the morphism $\delta_{\alpha,\beta} := \iota(\beta) + \alpha \wedge \cdot$, where the notation $\iota(\beta)$ stands for the contraction by $\beta$.
\end{dfn}

These objects behave well under tensor products.
For global sections $\alpha, \alpha', \beta, \beta'$ of vector bundles $V,V',V^\vee,(V')^\vee$, we have
\begin{equation*}
\left\lbrace \alpha , \beta \right\rbrace \otimes \left\lbrace \alpha' , \beta' \right\rbrace = \left\lbrace \alpha \oplus \alpha' , \beta \oplus \beta' \right\rbrace
\end{equation*}
where $\alpha \oplus \alpha' \in H^0(X,V \oplus V')$ and $\beta \oplus \beta' \in H^0(X,(V \oplus V')^{\vee})$.

In \cite{Polish3}, Polishchuk and Vaintrob provide an explicit description of the Hochschild homology of \MFs for the affine space $X = \aA^N$ with an invertible polynomial $W$ and show
\begin{equation*}
HH_*(\mathrm{MF}(\CC^N,W)) \simeq \cQ_W \cdot dx_1 \wedge \dotsm \wedge dx_N.
\end{equation*}
In particular, for any $\overline{\gamma} \in \Aut(W)^n$, we have
\begin{equation}\label{sthoch}
\st_{\gamma(1)} \otimes \dotsm \otimes \st_{\gamma(n)} \subset HH_*(\mathrm{MF}(\aA^{\overline{\gamma}},W_{\overline{\gamma}})).
\end{equation}

\begin{rem}
In \cite{Polish1}, Polishchuk and Vaintrob work in the category of equivariant matrix factorizations and define a more general cohomological field theory than we need here. Their state space is
\begin{equation*}
\bigoplus_{\gamma \in \Aut(W)} HH_*(\mathrm{MF}_{\textrm{Aut}(W)}((\aA^N)^\gamma,W_\gamma)) \simeq \bigoplus_{\gamma \in \Aut(W)} \bigoplus_{\gamma' \in \Aut(W)} \cQ_{(W_\gamma)_{\gamma'}}^{\Aut(W)}.
\end{equation*}
Following Polishchuk and Vaintrob (see \cite[Equation (5.15)]{Polish1}), we consider in this paper the specialization of PV's theory which consists in keeping only
\begin{equation}\label{special}
\gamma'=1
\end{equation}
in the expression of the PV's state space and we observe that this specialization gives the state space $\st$.
\end{rem}

In the way any \MF on $\aA^{\overline{\gamma}}$ is a matrix $U$ with coefficients in $\CC \left[ x_j(i)\right]_{(\sigma_i,x_j) \in \bB_{\overline{\gamma}}}$ of the form
\begin{equation*}
\begin{pmatrix}
0 & U_1\\
U_0 & 0
\end{pmatrix}, \quad U^2=W_{\overline{\gamma}} \cdot \textrm{Id},
\end{equation*}
we can figure out (see for instance \cite[Theorem 3.2.3]{Polish3}) the Chern character
\begin{equation}\label{Chernmf}
\Ch (U) = \mathrm{str} \biggl( \prod_{(\sigma_i,x_j) \in \bB_{\overline{\gamma}}} \frac{\partial U}{\partial x_j(i)} \biggr) \cdot \bigwedge_{(\sigma_i,x_j) \in \bB_{\overline{\gamma}}} d x_j(i) \in \st_{\gamma(1)} \otimes \dotsm \otimes \st_{\gamma(n)}.
\end{equation}
We take the same order for the variables of $\bB_{\overline{\gamma}}$ in the multiplication of matrices (from right to left) and in the wedge product (from left to right). The result is well-defined because super-trace and wedge product are both anti-commutative.
Moreover we notice that $\Ch (U)$ vanishes whenever the cardinal of $\bB_{\overline{\gamma}}$ is odd.


\subsection{Polishchuk--Vaintrob's matrix factorization}\label{polishfundmat}
Take a family $\pi \colon \cC \rightarrow S$ of $W$-spin curves over a base scheme $S$ and consider a resolution of any $R\pi_*(\cL_j)$ by a complex $[A_j \rightarrow B_j]$ of vector bundles.
For any geometric point $s \in S$, observe that
\begin{equation*}
\ker (A_j \rightarrow B_j)_s = H^0(\cC_s,\cL_{j,s}) \quad \textrm{and} \quad \textrm{coker} (A_j \rightarrow B_j)_s = H^1(\cC_s,\cL_{j,s}).
\end{equation*}
Denote by $A$ and $B$ the vector bundles
\begin{equation*}
A:= A_1 \oplus \dotsb \oplus A_N \quad \textrm{and} \quad B:= B_1 \oplus \dotsb \oplus B_N
\end{equation*}
on $S$ and by $X$ the total space of $A$ with projection $p$ to $S$,
\begin{equation*}
X:=\Spec (\mathrm{Sym} A^\vee) \quad \textrm{and} \quad p \colon X \longrightarrow S.
\end{equation*}
In \cite[Sect.~4.2]{Polish1}, Polishchuk and Vaintrob construct a morphism $Z \colon X \rightarrow \aA^{\overline{\gamma}}$ and two sections
\begin{equation*}
\alpha \in H^0(X,p^*B^\vee) \quad \textrm{and} \quad \beta \in H^0(X,p^*B), \qquad \textrm{with} \quad \alpha(\beta) = Z^*W_{\overline{\gamma}}.
\end{equation*}
These sections are sums $\alpha:=\alpha_1+\dotsb+\alpha_N$ and $\beta:=\beta_1 + \dotsb+ \beta_N$, where $\beta_j$ is induced by the differential of 
$[A_j \rightarrow B_j]$. By a slight abuse of notation, we write
\begin{equation}\label{alphabeta2}
\beta_j  \colon  A_j \rightarrow B_j.
\end{equation}
The section $\alpha_j$ is a sum of two morphisms, of which we give a rough idea in \eqref{Serreduality},
\begin{equation}\label{alphabeta}
\begin{array}{lcl}
\alpha'_j & \colon & \cS^{a_j} A_j \rightarrow  B_{t(j)}^\vee, \\
\alpha''_j & \colon & \cS^{a_j-1} A_j \otimes A_{t(j)} \rightarrow B_j^\vee. \\
\end{array}
\end{equation}
This yields Polishchuk--Vaintrob's \MF $\left\lbrace - \alpha , \beta \right\rbrace$ on $X$,
\begin{equation}\label{PVmatfact}
\PV:=\left\lbrace - \alpha , \beta \right\rbrace \in \textrm{MF}(X,-Z^*W_{\overline{\gamma}}).
\end{equation}
\begin{center}
\begin{tikzpicture}
\node (X) at (0.8,1) {$X$};
\node[above] (E) at (1.8,1.6) {$\PV$};
\node (A) at (0,0) {$\aA^{\overline{\gamma}}$};
\node (S) at (1.6,0) {$S$};
\draw[->,>=stealth] (E) to[bend left=10] (X);
\draw[->,] (X) -- (A);
\draw[->] (X) -- (S);
\draw (0.45,0.65) node[left] {$Z$};
\draw (1.15,0.65) node[right] {$p$};
\end{tikzpicture}
\end{center}

The tensor product of $\PV$ with \MFs from the affine space $\aA^{\overline{\gamma}}$ of potential $Z^*W_{\overline{\gamma}}$ produces a two-periodic complex, whose support is included in the zero section $S \hookrightarrow X$ (see \cite[Sect.~4.2, Step 4; Proposition 1.4.2]{Polish1}); we apply the push-forward functor and obtain
\begin{equation}\label{foncteur}
\begin{array}{lcccc}
\Phi \colon & \textrm{MF}(\aA^{\overline{\gamma}},W_{\overline{\gamma}}) & \longrightarrow & \textrm{MF}(S,0) \\
            &                               U                            & \longmapsto     & p_*(Z^*(U) \otimes \PV).    \\
\end{array}
\end{equation}
\begin{rem}
The functor $\Phi$ matches the functor obtained from \cite[Equation (5.5)]{Polish1} once we forget the equivariance.
Indeed, it is a direct application of the projection formula \cite[Proposition 1.5.5]{Polish1}.
As a consequence, the morphism $\Phi_*$ induced on the Hochschild homology coincides with the specialization \eqref{special} of the morphism defined in \cite[Equation (5.8)]{Polish1}.
In Sect.~\ref{cvirinvertible}, we will use the Chern character $\Ch(\Phi(U))$ of the two-periodic complex $\Phi(U)$ to define the virtual class.
\end{rem}

For sake of clarity, let us illustrate the construction of the morphisms $Z$ and $\alpha$ over $S=\Spec (\CC)$.
Consider a $W$-spin curve of type $\overline{\gamma}$.
In the broad case, i.e.~for $\gamma_j(i)=1$, we choose an isomorphism
\begin{equation}\label{rigid}
H^0(\sigma_i,\cL_j |_{\sigma_i}) \simeq \cO_S = \CC,
\end{equation}
while in the narrow case, the space of sections $H^0(\sigma_i,\cL_j\left| \right._{\sigma_i})$ vanishes.
Then we denote by $Z_j(i)$ the evaluation of a section of $\cL_j$ at a marked point $\sigma_i$, with $(\sigma_i,x_j) \in \bB_{\overline{\gamma}}$; this is a morphism
\begin{equation}\label{morphismZ}
Z_j(i) \colon H^0(\cC,\cL_j) \rightarrow \CC
\end{equation}
and we assemble the morphisms $Z_j(i)$ for every $(\sigma_i,x_j) \in \bB_{\overline{\gamma}}$ to get
\begin{equation*}
Z \colon H^0(\cC,\cL_1) \oplus \dotsb \oplus H^0(\cC,\cL_N) \rightarrow \aA^{\overline{\gamma}}.
\end{equation*}
\begin{rem}
The construction of the morphism $Z$ depends on the choice of an isomorphism \eqref{rigid}, called a rigidification of the $W$-spin curve.
The approach of both Fan--Jarvis--Ruan and Polishchuk--Vaintrob is to work on the moduli space parametrizing $W$-spin curves alongside with such rigidification; this amounts to work over a finite \'etale cover of the moduli space considered here. However, the final computation of the virtual class provided in Theorem \ref{main} does not depend on this choice.
\end{rem}

A crucial and yet elementary ingredient of our explicit realization of the virtual class is the twisting of the line bundles by the marked points, that is
\begin{equation*}
\cL'_j:=\cL_j(-\sigma_1-\dotsb-\sigma_n).
\end{equation*}
Using the isomorphism \eqref{spinstruct}, we obtain
\begin{equation*}
{\cL'_j}^{\otimes a_j} \otimes \cL'_{t(j)} \hookrightarrow \omega_{\cC} \qquad (\textrm{orbifold canonical bundle of } \cC).
\end{equation*}
By (orbifold) Serre duality, we get morphisms
\begin{equation}\label{Serreduality}
\mathrm{Sym}^{a_j} H^0(\cC,\cL'_j) \rightarrow H^0(\cC,{\cL'_j}^{\otimes a_j}) \hookrightarrow H^0(\cC,\omega_{\cC} \otimes {\cL'_{t(j)}}^{\vee}) \simeq H^1(\cC,\cL'_{t(j)})^\vee,
\end{equation}
and similarly
\begin{equation*}
\mathrm{Sym}^{a_j-1}(H^0(\cC,\cL'_j)) \otimes H^0(\cC,\cL'_{t(j)}) \rightarrow H^1(\cC,\cL'_j)^\vee.
\end{equation*}
These morphisms are related to $\alpha'_j$ and $\alpha''_j$ of \eqref{alphabeta}.

In \cite[Sect.~4.2]{Polish1}, Polishchuk and Vaintrob lift all these constructions over a base scheme $S$, with appropriate vector bundles $A$ and $B$.
In particular, for each monomial we have a commutative diagram
\begin{equation*}
\xymatrix{
    \cS^{a_j} A_j \otimes A_{t(j)} \ar[r] \ar[d] & (\cS^{a_j} A_j \otimes B_{t(j)}) \oplus (\cS^{a_j-1} A_j \otimes A_{t(j)} \otimes B_j) \ar[d] \\
    \cO_S^{\overline{\gamma}_{j}} \ar[r] & \cO_S,
  }
\end{equation*}
the morphisms are defined as follows.
The vertical arrow on the left is induced by $Z$ and by the algebra structure on the sheaf $\cO_S^{\overline{\gamma}_{j}}$, the vertical arrow on the right is induced by $\alpha_j$, the first horizontal arrow is induced by $\beta_j$ and by $\beta_{t(j)}$ and the second horizontal arrow is the trace.

\subsection{Lifting up the state space}\label{2.3}
Let $\overline{\gamma} \in \textrm{Aut}(W)^n$ and fix admissible decorations $\rR_{\gamma(1)}, \dotsc, \rR_{\gamma(n)}$, one for each marked point.
We assemble them in an admissible decoration of the graph $\Gamma_{W_{\overline{\gamma}}}$
\begin{equation*}
\rR_{\overline{\gamma}} := \left\lbrace (\sigma_i,x_j) \in \bB_{\overline{\gamma}} \left| \right. x_j \in \rR_{\gamma(i)} \right\rbrace \subset \bB_{\overline{\gamma}},
\end{equation*}
and take
\begin{equation*}
e(\rR_{\overline{\gamma}}) := e(\rR_{\gamma(1)}) \otimes \dotsm \otimes e(\rR_{\gamma(n)}) \in \st_{\gamma(1)} \otimes \dotsm \otimes \st_{\gamma(n)} .
\end{equation*}
Consider on $\aA^{\overline{\gamma}}$ the free sheaf  
\begin{equation*}
\bigoplus_{j=1}^N \cO^{\rR_j} =:  \cO^{\rR_{\overline{\gamma}}} \subset \cO^{\overline{\gamma}} \quad \textrm{ with basis } \left\lbrace e_j(i) \right\rbrace_{(\sigma_i,x_j) \in \rR_{\overline{\gamma}}}.
\end{equation*}
For any $(\sigma_i,x_j) \in \rR_{\overline{\gamma}}$, form the sections
\begin{equation}\label{sections}
\begin{array}{rcll}
\mathfrak{a}_j(i) & := & \left( x_{s(i,j)}(i)^{a_{s(i,j)}} + x_j(i)^{a_j-1} x_{t(i,j)}(i) \right) \cdot e_j(i) & \in \cO^{\rR_j} \subset \cO^{\rR_{\overline{\gamma}}}, \\[0.2cm]
\mathfrak{b}_j(i) & := &    x_j(i) \cdot e_j(i)^\vee & \in {\cO^{\rR_j}}^{\vee} \subset {\cO^{\rR_{\overline{\gamma}}}}^\vee.
\end{array}
\end{equation}
Here, the notations $s(i,j)$ and $t(i,j)$ are applied to the vertices of $\Gamma_{W_{\overline{\gamma}}}$ (see \eqref{previousnext} for the definition) and we take the convention $x_{-\infty}(i)=0$ and $a_{-\infty}=1$.
Since the decoration $\rR_{\overline{\gamma}}$ is admissible, the Koszul \MF given by
\begin{equation*}
\mathfrak{a}_{\rR_{\overline{\gamma}}} := \sum_{(\sigma_i,x_j) \in \rR_{\overline{\gamma}}} \mathfrak{a}_j(i) \qquad \textrm{and} \qquad \mathfrak{b}_{\rR_{\overline{\gamma}}} := \sum_{(\sigma_i,x_j) \in \rR_{\overline{\gamma}}} \mathfrak{b}_j(i)
\end{equation*}
is in $\textrm{MF}(\mathbb{A}^{\overline{\gamma}},W_{\overline{\gamma}})$; we denote it by $\kK(\rR_{\overline{\gamma}})$ and compute its Chern character via \eqref{Chernmf}.

\begin{lem}
For any $\overline{\gamma} \in \textrm{Aut}(W)^n$ and any admissible decoration $\rR_{\overline{\gamma}}$ of the graph $\Gamma_{W_{\overline{\gamma}}}$, we have
\begin{equation}\label{ChernKKK}
\Ch (\kK(\rR_{\overline{\gamma}})) = e(\rR_{\overline{\gamma}}) \in \st_{\gamma(1)} \otimes \dotsm \otimes \st_{\gamma(n)}.
\end{equation}
In particular, the Chern character vanishes when the decoration is not balanced.
\end{lem}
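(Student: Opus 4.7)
The plan is to compute the Chern character \eqref{Chernmf} directly and reduce to a per-component analysis. The graph $\Gamma_{W_{\overline{\gamma}}}$ splits as the disjoint union $\bigsqcup_i \Gamma_{W_{\gamma(i)}}$ over marked points, which further decomposes into Fermat, chain-type, and loop-type connected components on pairwise disjoint sets of variables. Correspondingly $W_{\overline{\gamma}}$ is a Thom--Sebastiani sum, the basis element $e(\rR_{\overline{\gamma}})$ factors by its definition \eqref{basisH}, and $\kK(\rR_{\overline{\gamma}}) = \bigotimes_{(\sigma_i,x_j) \in \rR_{\overline{\gamma}}} \{\mathfrak{a}_j(i),\mathfrak{b}_j(i)\}$ groups as an external tensor product of Koszul MFs supported on each component (since the sections $\mathfrak{a}_j(i)$ and $\mathfrak{b}_j(i)$ involve only variables of the component containing $v_j$). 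By the K\"unneth-type multiplicativity of the Chern character for MFs with disjoint variables, the identity \eqref{ChernKKK} reduces to a per-component verification. Fermat components carry a single vertex which is necessarily narrow in any admissible balanced decoration, hence contribute trivially on both sides.

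For a chain component $C = \{v_1 \to \dotsb \to v_c\}$ with terminal self-loop $t(c) = c$, admissibility forces the alternating pattern starting from $v_c$ crossed, so balance requires $c$ even and the uncrossed vertices are $\{v_1,v_3,\dotsc,v_{c-1}\}$. On the exterior algebra $\bigwedge^\bullet \cO\langle e_j : j \text{ even}\rangle$ I would decompose $\partial_{x_k} U$ as follows: at uncrossed $x_j$, it is a sum of wedgings $a_j x_j^{a_j-1}\,(e_{t(j)} \wedge \cdot) + x_{s(j)}^{a_{s(j)}-1}\,(e_{s(j)} \wedge \cdot)$ (with the second summand zero when $j=1$ by $x_{-\infty}=0$), and at crossed $x_j$, it is the sum of the contraction $\iota(e_j^\vee)$ and the polynomial wedging $(a_j-1)x_j^{a_j-2}x_{t(j)}\,(e_j\wedge\cdot)$. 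Expanding $\mathrm{str}\bigl(\prod_j \partial_{x_j} U\bigr)$ and observing that a non-vanishing diagonal contribution requires each wedging $e_j\wedge$ to be paired with exactly one contraction $\iota(e_j^\vee)$, the linear topology of the chain together with the boundary conditions at $v_1$ and $v_c$ forces a unique surviving assignment: at each uncrossed $v_j$ pick the term $a_j x_j^{a_j-1}\,(e_{j+1}\wedge\cdot)$, paired with the contraction at the adjacent crossed $v_{j+1}$, while each polynomial wedging at a crossed vertex is killed by double-wedging. This yields $\mathrm{str} = \prod_{j\text{ unc}} a_j x_j^{a_j-1}$, matching $e(\rR_C)$.

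For a loop component $L = \{v_1 \to \dotsb \to v_{2\ell} \to v_1\}$ with alternating decoration, the same analysis applies but the cyclic topology admits a second family of non-vanishing assignments: at every uncrossed $v_j$ pick the term $x_{s(j)}^{a_{s(j)}-1}\,(e_{s(j)}\wedge\cdot)$ and at every crossed $v_j$ pick the polynomial wedging $(a_j-1)x_j^{a_j-2}x_{t(j)}\,(e_j\wedge\cdot)$. Collecting the monomials around the cycle and using the anticommutativity of $\bigwedge^\bullet \cO^{\rR_L}$ to assemble the signs, this cyclic assignment contributes precisely the additional term $-\prod_{j\text{ cr}}(-x_j^{a_j-1})$, reproducing the loop factor in \eqref{basisH}. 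The main obstacle is the careful tracking of signs arising from (i) the supertrace, (ii) the anticommutativity of the exterior algebra at each step of the matrix product, and (iii) the prescribed ordering of $d\underline{x}_\gamma$ starting from an uncrossed vertex and following the arrows; these are rendered compatible by the convention matching product order and wedge order set right after \eqref{Chernmf}, which I would verify on the base cases of chains and loops of length two and four. Vanishing of $\Ch(\kK(\rR_{\overline{\gamma}}))$ when $\rR_{\overline{\gamma}}$ is not balanced follows because in an unbalanced component the product of partials shifts the $\ZZ/2$-parity of the exterior algebra and annihilates the supertrace, matching the convention that $e(\rR_{\overline{\gamma}}) = 0$.
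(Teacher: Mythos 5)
Your overall strategy --- compute $\Ch(\kK(\rR_{\overline\gamma}))$ by expanding the supertrace \eqref{Chernmf}, differentiate the Koszul differential term by term, and exploit the Thom--Sebastiani/K\"unneth decomposition by connected component of $\Gamma_{W_{\overline\gamma}}$ --- matches the paper's proof in structure; the explicit reduction to per-component MFs is a cleaner way to say what the paper leaves implicit. Your chain analysis is correct: the boundary condition $x_{-\infty}=0$ at $v_1$ and the self-loop at $v_c$ force a unique matching in which every crossed vertex contracts and every uncrossed wedges its successor, yielding $\prod_{\textrm{unc}}a_j x_j^{a_j-1}$. The parity observation for the unbalanced case is also fine.

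The loop analysis, however, has a genuine error. You describe the ``cyclic assignment'' as taking, at each crossed $v_j$, the polynomial wedging $(a_j-1)x_j^{a_j-2}x_{t(j)}\,(e_j\wedge\cdot)$ and at each uncrossed $v_{j'}$ the predecessor-wedging $x_{s(j')}^{a_{s(j')}-1}\,(e_{s(j')}\wedge\cdot)$. This assignment contains no contractions whatsoever and wedges each crossed $e_j$ twice (once by $\partial_j$, once by $\partial_{t(j)}$), so it annihilates the supertrace identically; the resulting monomial $\prod_{\textrm{unc}}x_{s(j')}^{a_{s(j')}-1}\cdot\prod_{\textrm{cr}}(a_j-1)x_j^{a_j-2}x_{t(j)}$ also does not have the form $\prod_{\textrm{cr}}x_j^{a_j-1}$. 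The point you are missing is that on a balanced component \emph{every} crossed vertex must contract: if any crossed $\partial_j$ took its wedging term, the number of wedgings would exceed the number of contractions and the operator could not have a nonzero diagonal entry. What changes between the two loop contributions is not the choice at crossed vertices but which uncrossed neighbour supplies the compensating wedge: either every crossed $e_j$ is wedged by $\partial_{s(j)}$ (coefficient $a_{s(j)}x_{s(j)}^{a_{s(j)}-1}$, total $\prod_{\textrm{unc}}a_{j'}x_{j'}^{a_{j'}-1}$), or every crossed $e_j$ is wedged by $\partial_{t(j)}$ (coefficient $x_j^{a_j-1}$, total $\prod_{\textrm{cr}}x_j^{a_j-1}$). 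These two globally consistent orientations of the cycle are exactly the paper's $K=\emptyset$ and $K=\{\text{all crossed in the loop}\}$, and together with the sign bookkeeping they reproduce the loop factor of \eqref{basisH}.
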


\begin{proof}
Equation \eqref{ChernKKK} is a direct computation from \eqref{Chernmf}, so we only give the main steps.
First, we derivate the differential of $\kK(\rR_{\overline{\gamma}})$ along $x_j(i)$; and denote it
\begin{equation*}
\partial_j(i) := \frac{\partial (\mathfrak{a}_{\rR_{\overline{\gamma}}} \wedge \cdot + \mathfrak{b}_{\rR_{\overline{\gamma}}}(\cdot))}{\partial x_j(i)}.
\end{equation*}
There are two cases:
if $(\sigma_i,x_j) \in \rR_{\overline{\gamma}}$, we get
\begin{equation*}
\partial_j(i) = \left\lbrace \begin{array}{rl}
(a_j-1) ~ x_j(i)^{a_j-2} ~ x_{t(i,j)}(i) ~ e_j(i) \wedge \cdot + e_j(i)^\vee(\cdot) & \textrm{if } x_{t(i,j)}(i) \neq x_j(i), \\[0.2cm]
a_j ~ x_j(i)^{a_j-2} ~ x_{t(i,j)}(i) ~ e_j(i) \wedge \cdot + e_j(i)^\vee(\cdot) & \textrm{otherwise,} \\
\end{array}\right. 
\end{equation*}
and if $(\sigma_i,x_j) \in \bB_{\overline{\gamma}}-\rR_{\overline{\gamma}}$, we get
\begin{equation*}
\partial_j(i) = x_{s(i,j)}(i)^{a_{s(i,j)}-1} ~ e_{s(i,j)}(i) \wedge \cdot  + a_j ~ x_j(i)^{a_j-1} ~ e_{t(i,j)}(i) \wedge \cdot
\end{equation*}
To get the supertrace of \eqref{Chernmf}, we have to compute
\begin{equation}\label{tocompute}
{e_K}^\vee \circ \biggl( \bigcirc_{(\sigma_i,x_j) \in \bB_{\overline{\gamma}}} \partial_j(i) \biggr) (e_K),
\end{equation}
for every element
\begin{equation*}
e_K:= \bigwedge_{(\sigma_i,x_j) \in K} e_j(i)~, \quad K \subset \rR_{\overline{\gamma}}.
\end{equation*}
Observe that, for any $(\sigma_i,x_j) \in \rR_{\overline{\gamma}}$, the only non-zero contribution from $\partial_j(i)$ to \eqref{tocompute} is given by the contraction $e_j(i)^\vee$ (if we take the wedge product by $e_j(i)$, then we cannot contract anymore via $e_j(i)^\vee$ and we obtain zero by ${e_K}^\vee$).
Thus, there are two possibilities.
If $(\sigma_i,x_j) \in K$, then we apply $\partial_{t(i,j)}(i) \circ \partial_j(i)$ and the only non-zero contribution to \eqref{tocompute} is
\begin{equation*}
x_j(i)^{a_j-1} e_j(i) \wedge \bigl( e_j(i)^\vee(e_K) \bigr) = x_j(i)^{a_j-1} ~ e_K.
\end{equation*}
If $(\sigma_i,x_j) \notin K$, then we apply $\partial_j(i) \circ \partial_{s(i,j)}(i)$ and the non-zero contribution is
\begin{equation*}
e_j(i)^\vee \bigl( a_{s(i,j)} ~ x_{s(i,j)}(i)^{a_{s(i,j)}-1} ~ e_j(i) \wedge e_K \bigr) = a_{s(i,j)} ~ x_{s(i,j)}(i)^{a_{s(i,j)}-1} ~ e_K.
\end{equation*}
Finally, the only non-zero values for \eqref{tocompute} are given by the elements $e_K$ where $K$ is the empty set or the set of all the crossed vertices of some loop-type components of the graph.
Then it is straightforward to recover the formula \eqref{basisH}.
\end{proof}


\subsection{Two-periodic complex and virtual class}\label{cvirinvertible}
The tensor product of the \MF $\PV$ with the pull-back $Z^*\kK(\rR_{\overline{\gamma}})$ yields a two-periodic complex on $X$.
More precisely, we write
\begin{equation*}
\begin{array}{lcl}
\alpha'_{s(j)}+\alpha''_j & \colon & \cS^{a_{s(j)}} A_{s(j)} \oplus (\cS^{a_j-1} A_j \otimes A_{t(j)}) \rightarrow  B_j^\vee, \\
\beta_j  & \colon &  A_j \rightarrow B_j, \\
\end{array}
\end{equation*}
with the convention $A_{-\infty}=0$ and $a_{-\infty}=1$,
we add these morphisms to
\begin{eqnarray*}
Z^*\mathfrak{a}_j & \colon & \textrm{Sym}^{a_{s(j)}} A_{s(j)} \oplus (\textrm{Sym}^{a_j-1} A_j \otimes A_{t(j)}) \rightarrow \cO_S^{\rR_j}, \\
Z^*\mathfrak{b}_j & \colon & A_j \rightarrow (\cO_S^{\rR_j})^\vee,
\end{eqnarray*}
and we obtain
\begin{equation}\label{widealpha}
\begin{array}{lcl}
\widetilde{\alpha}_j & \colon & \cS^{a_{s(j)}} A_{s(j)} \oplus (\cS^{a_j-1} A_j \otimes A_{t(j)}) \rightarrow \widetilde{B}_j^\vee, \\
\widetilde{\beta}_j & \colon & A_j \rightarrow \widetilde{B}_j,
\end{array}
\end{equation}
where $\widetilde{B}_j$ is the vector bundle $B_j \oplus (\cO_S^{\rR_j})^\vee$.
Consider the direct sum
\begin{equation*}
\widetilde{B}:=\widetilde{B}_1 \oplus \dotsb \oplus \widetilde{B}_N
\end{equation*}
and the morphisms
\begin{equation*}
\begin{array}{l}
\widetilde{\alpha}:=\widetilde{\alpha}_1 + \dotsb + \widetilde{\alpha}_N, \\
\widetilde{\beta}:=\widetilde{\beta}_1 + \dotsb + \widetilde{\beta}_N. \\
\end{array}
\end{equation*}
We end with the two-periodic complex
\begin{equation}\label{anticomm}
\{ \widetilde{\alpha} , \widetilde{\beta} \} = \PV \otimes Z^*\kK(\rR_{\overline{\gamma}}) \in \mathrm{MF}(X,0).
\end{equation}

According to \eqref{foncteur}, it is well-defined to push-forward this two-periodic complex via the projection $p$, then we get
\begin{equation*}
p_* \{ \widetilde{\alpha} , \widetilde{\beta} \} = \Phi(\kK(\rR_{\overline{\gamma}})).
\end{equation*}
By \cite[Remark 1.5.1]{Polish1}, in the case of two-periodic complexes we have a quasi-isomorphism
\begin{equation*}
p_* \{ \widetilde{\alpha} , \widetilde{\beta} \}  ~ \simeq ~ p^{\textrm{naive}}_* \{ \widetilde{\alpha} , \widetilde{\beta} \}
\end{equation*}
where $p_*^\textrm{naive}$ is the naive push-forward, i.e.~the push-forward for quasi-coherent sheaves instead of matrix factorizations.
We denote by $T$ the two-periodic complex
\begin{equation}\label{TTT}
T:=p^\textrm{naive}_* \{ \widetilde{\alpha} , \widetilde{\beta} \}
\end{equation}
of quasi-coherent sheaves on $S$, with
\begin{equation*}
\begin{array}{lcl}
T^+ &:=& \mathrm{Sym}(A^\vee) \otimes \bigwedge_{\textrm{even}} \widetilde{B}^\vee, \\
T^- &:=& \mathrm{Sym}(A^\vee) \otimes \bigwedge_{\textrm{odd}} \widetilde{B}^\vee, \\
\end{array}
\end{equation*}
and the differential $\delta$ induced by \eqref{widealpha}.

\begin{dfn}\label{cvircomplex}
The virtual class evaluated at $e(\rR_{\overline{\gamma}})$ is
\begin{equation}\label{virtualcomplex}
\cvirPV(e(\rR_{\overline{\gamma}})) = \Ch \left( H^+(T) - H^-(T) \right) \frac{\Td (\widetilde{B})}{\Td (A)} \in H^*(S,\CC)
\end{equation}
and it extends in a linear map
\begin{equation*}
\cvirPV \colon \st^{\otimes n} \longrightarrow H^*(S;\CC).
\end{equation*}
\end{dfn}
By \cite[Lemma 1.2.1]{Polish3}, any \MF $U$ in $\textrm{MF}(\aA^{\overline{\gamma}},W_{\overline{\gamma}})$ satisfies
\begin{equation}\label{commut}
\Phi_* (\Ch (U)) = \Ch (\Phi(U)),
\end{equation}
so that the following diagram is commutative.
\begin{equation*}
\xymatrix{
    \mathrm{MF}(\aA^{\overline{\gamma}},W_{\overline{\gamma}}) \ar[r]^{\Phi} \ar[d]_\Ch & \mathrm{MF}(S,0) \ar[d]^\Ch \\
    \otimes_{i=1}^n \st_{\gamma(i)} \ar[r]_{\Phi_*} & H^*(S)
    }
\end{equation*}

\noindent
Thus Definition \ref{cvircomplex} is compatible with \cite[Equation (5.15)]{Polish1} on any base scheme $S$, and yields a morphism
\begin{equation*}
\cvirPV \colon \st^{\otimes n} \longrightarrow H^*(\sS_{g,n};\CC).
\end{equation*}


\section{Recursive complexes}\label{contrib}
This section has six parts. The first four parts introduce a new structure called the recursive complex. They may be read independently from the rest of the paper, but we can keep in mind the two-periodic complex we have just constructed.
These four parts are the setup of the recursive complex (Sect.~\ref{3.1}), the main technical property about its cohomology (Sect.~\ref{3.2}, Theorem \ref{genus0theo}), its proof (Sect.~\ref{proof}), and its application yielding the formula \eqref{formulelim} for the virtual class in terms of characteristic classes.  
In Sect.~\ref{appli}, we get back to the quantum singularity theory of invertible polynomials and compute Polishchuk--Vaintrob's class in Theorem \ref{main}.
In Sect.~\ref{compatsec}, we prove Theorem \ref{compat} on the compatibility between Polishchuk--Vaintrob and Fan--Jarvis--Ruan--Witten theories for (almost) every invertible polynomials.

\subsection{Definition and non-degeneracy}\label{3.1}
Consider two vector bundles $A$ and $B$ on a smooth scheme $S$, and a two-periodic complex
\begin{equation*}
T = \left( \dotsb \rightarrow T^+ \xrightarrow{\delta} T^- \xrightarrow{\delta} T^+ \rightarrow \dotsb \right)
\end{equation*}
of quasi-coherent sheaves on the base space $S$, with
\begin{equation*}
T^+ := \cS ~ A^\vee \otimes \bigwedge\nolimits_{\textrm{even}} B^\vee
\end{equation*}
and $T^-$ replacing ``even'' by ``odd''.

\begin{dfn}\label{defrecursiveMF}
Let $a_1,\dotsc,a_N$ be positive integers greater than or equal to $2$.
We say that $(T,\delta)$ is a recursive complex with respect to $(a_1,\dotsc,a_N)$ if the sheaves $A$ and $B$ can be decomposed into a direct sum of coherent locally free sheaves
\begin{equation*}
A := A_1 \oplus \dotsb \oplus A_N \qquad \textrm{and} \qquad B:= B_1 \oplus \dotsb \oplus B_N
\end{equation*}
such that there are morphisms
\begin{displaymath}
\begin{array}{lclcl}
\alpha_j & \colon & \cO_S & \rightarrow & \left( \cS^{a_j} A_j^\vee \otimes B_{j+1}^\vee \right) \oplus \left( \cS^{a_j-1} A_j^\vee \otimes A_{j+1}^\vee \otimes B_j^\vee \right), \\
\beta_j & \colon & B_j^\vee & \rightarrow & A_j^\vee, \\
\end{array}
\end{displaymath}
whose sum $\alpha_1+\dotsb+\alpha_N+\beta_1+\dotsb+\beta_N$ induce the differential $\delta$.
We use the cyclic convention $A_{N+1}=A_1$ and $B_{N+1}=B_1$.
\end{dfn}

\begin{rem}
The data of a recursive complex embodies a $\ZZ^{N+1}$-grading, as we explain later in Sect.~\ref{proof}. In particular, we recover the two-periodic complex from this $\ZZ^{N+1}$-graded complex in the usual way.
\end{rem}

\begin{exa*}
Let $W=x_1^{a_1}x_2+\dotsb+x_N^{a_N}x_1$ be a loop polynomial.
For any genus, any type $\overline{\gamma} \in \Aut(W)^n$ and any decoration $\rR_{\overline{\gamma}}$, the naive push-forward \eqref{TTT}
\begin{equation*}
p^\textrm{naive}_*(\PV \otimes \kK(\rR_{\overline{\gamma}}))
\end{equation*}
over a base scheme $S$ is a recursive complex.
\end{exa*}

\begin{exa*}
Let $W=x_1^{a_1}x_2+\dotsb+x_{N-1}^{a_{N-1}}x_N+x_N^{a_N+1}$ be a chain polynomial.
In genus $g=0$, for any type $\overline{\gamma} \in \Aut(W)^n$ and any decoration $\rR_{\overline{\gamma}}$, the naive push-forward
\begin{equation*}
p^\textrm{naive}_*(\PV \otimes \kK(\rR_{\overline{\gamma}}))
\end{equation*}
over a base scheme $S$ is a recursive complex, because the vector bundle $A_N$, the morphism $\alpha_N$ and the morphism $\beta_N$ vanish (see \eqref{quasiconcave} and  \cite[Proposition 3.1]{LG/CY} for precisions on this vanishing condition).
\end{exa*}

\begin{rem}
For a chain polynomial in higher genus, the two-periodic complex \eqref{TTT} is not recursive, but satisfies a similar definition, where we adopt the convention $A_{N+1}=0$ and $B_{N+1}=B_N$ instead of the cyclic convention.
\end{rem}

Consider a recursive complex $T$ with integers $a_1,\dotsc,a_N \geq 2$.
Remark that each morphism $\alpha_j$ is a sum of two morphisms $\alpha'_j$ and $\alpha''_j$ with
\begin{equation}\label{alpha'}
\begin{array}{lclcl}
\alpha'_j & \colon & \cO_S & \rightarrow & \cS^{a_j} (A_j)^\vee \otimes (B_{j+1})^\vee, \\
\alpha''_j & \colon & \cO_S & \rightarrow & \cS^{a_j-1} (A_j)^\vee \otimes (A_{j+1})^\vee \otimes (B_j)^\vee. \\
\end{array}
\end{equation}

\begin{dfn}\label{non-degenera}
For any $j$, let us denote by $\overline{A}_{j,s}$ the kernel of the map $\beta^\vee_{j,s}$ over a geometric point $s$ in $S$, and by $\overline{B}_{j,s}$ its cokernel.
Each morphism $\alpha'_{j,s}$ over the point $s$ induces a morphism
\begin{equation*}
\overline{\alpha}'_{j,s} \colon \CC \rightarrow \cS^{a_j} (\overline{A}_{j,s})^\vee \otimes (\overline{B}_{j+1,s})^\vee
\end{equation*}
and we say that the morphism $\alpha'_j$ is non-degenerate if for every geometric point $s$ and every $v \in \overline{A}_{j,s}$, the condition
\begin{equation*}
\forall w \in \overline{B}_{j+1,s}~,~~ (\overline{\alpha}'_{j,s})^\vee  (v^{a_j} \otimes w) = 0
\end{equation*}
implies $v=0$.
A recursive complex is non-degenerate if each morphism $\alpha'_j$ is so. 
\end{dfn}

This definition is equivalent to \cite[Definition 3.1.5]{ChiodoJAG} where one requires the morphism $\alpha'_j$ to induce a base-point free linear system
\begin{equation*}
\overline{B}_{j+1,s} \rightarrow \cS^{a_j} (\overline{A}_{j,s})^\vee = H^0(\PP (\overline{A}_{j,s}),\cO_{\PP (\overline{A}_{j,s})}(a_j)).
\end{equation*}

\subsection{Computation of cohomology}\label{3.2}
A recursive complex is multi-graded; the term of multi-degree $(\underline{p},\underline{q}) \in \NN^{2N}$ is
\begin{equation*}
T^{(\underline{p},\underline{q})} := \left( \cS^{p_1} (A_1)^\vee \otimes \dotsb \otimes \cS^{p_N} (A_N)^\vee \right)  \otimes \left( \Lambda^{q_1}(B_1)^\vee \otimes \dotsb \otimes \Lambda^{q_N} (B_N)^\vee \right)
\end{equation*}
and we have
\begin{equation*}
T^+ := \cS ~ A^\vee \otimes \bigwedge\nolimits_{\textrm{even}} B^\vee = \bigoplus_{\substack{(\underline{p},\underline{q}) \in \NN^{2N} \\ q_1+\dotsb+q_N \textrm{ even}}} T^{(\underline{p},\underline{q})},
\end{equation*}
and $T^-$ replacing ``even'' by ``odd''.
To state Theorem \ref{genus0theo} on the cohomology $H^\pm(T,\delta)$, we need to introduce for any positive intergers $R_1,\dotsc,R_N$ the polytope
\begin{equation}\label{polytope}
\cP(R_1,\dotsc,R_N) := \cP^+ \cap \bigcap_{j=1}^N \cP_j(R_j),
\end{equation}
where 
\begin{equation*}
\cP^+ := \left\lbrace (\underline{p},\underline{q}) \in \NN^{2N} \left| \right. q_j \leq \textrm{rank}(B_j) \right\rbrace \subset \ZZ^{2N} \qquad \textrm{and}
\end{equation*}
\begin{equation*}
\cP_j(R_j) := \biggl\lbrace  (\underline{p},\underline{q}) \in \ZZ^{2N} \left| \right. (p_j+q_j) + \sum_{k=j+1}^N (-a_j) \dotsm (-a_{k-1}) (p_k+q_k) \leq R_j   \biggr\rbrace .
\end{equation*}
Observe that the vector bundle $T^{(\underline{p},\underline{q})}$ vanishes (when $(\underline{p},\underline{q})$ is) outside of $\cP^+$ and the polytope $\cP(R_1,\dotsc,R_N)$ is a finite subset of $\ZZ^{2N}$.

\begin{thm}\label{genus0theo}
Let $T$ be a non-degenerate recursive complex with 
\begin{equation}\label{qqconcave}
A_N=0 \qquad \textrm{(vanishing condition)}.
\end{equation}
Then the cohomology groups $H^+(T)$ and $H^-(T)$ are coherent and, for any sufficiently large integers $R_1 \gg \dotsb \gg R_N \gg 1$, the evaluation of the two-periodic complex $T$ in $K_0(S)$ equals
\begin{equation}\label{sumpolytope}
\sum_{(\underline{p},\underline{q}) \in \cP(R_1,\dotsc,R_N)} (-1)^{q_1+\dotsb+q_N} \bigl[ T^{(\underline{p},\underline{q})}\bigr].
\end{equation}
\end{thm}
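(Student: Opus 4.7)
The strategy is to exploit the $\NN^{2N}$-grading of $T$ to reduce, inductively in $j = N, N-1, \ldots, 1$, to a finite-dimensional subcomplex whose indexing set is precisely the polytope $\cP(R_1,\ldots,R_N)$. The vanishing condition $A_N = 0$ is what lets the induction start, and non-degeneracy of each $\alpha'_j$ is what lets it progress.

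First I would unpack how the differential $\delta$ interacts with the multi-grading. Since $\beta_j$ maps $B_j^\vee \to A_j^\vee$, it raises $p_j$ by $1$ and lowers $q_j$ by $1$; since $\alpha'_j$ lands in $\cS^{a_j}A_j^\vee \otimes B_{j+1}^\vee$, it lowers $p_j$ by $a_j$ and raises $q_{j+1}$ by $1$; and $\alpha''_j$ lowers $p_j$ by $a_j-1$, lowers $p_{j+1}$ by nothing but shifts an $A_{j+1}^\vee$ factor and raises $q_j$ by $1$. With the cyclic convention $A_{N+1}=A_1$, $B_{N+1}=B_1$, the vanishing $A_N = 0$ forces both $\alpha_N$ and $\beta_N$ to be zero, so the recursion becomes acyclic (chain-like) with terminal index $j = N$.

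Next, I would perform the induction. Fix large $R_N$ and consider the subcomplex $T_{\leq R_N}$ supported on indices with $p_N + q_N \leq R_N$. On the graded quotient $T / T_{\leq R_N}$, the morphism $\alpha'_{N-1}$ together with $\beta_N$ (which is zero) effectively yields a Koszul-type subcomplex in the variables coming from $A_N$ and $B_N$; non-degeneracy of $\alpha'_{N-1}$ (applied pointwise using Definition \ref{non-degenera}) guarantees that the associated Koszul complex is exact in sufficiently high degrees, so the contribution of $T / T_{\leq R_N}$ to $K_0(S)$ vanishes. This is the analogue of \cite[\S 3.2]{ChiodoJAG}; the shape of the cut-off, involving the alternating product $(-a_j)\cdots(-a_{k-1})$, comes from the fact that $\alpha'_{j}$ trades degree $a_j$ in $p_j$ against degree $1$ in $q_{j+1}$ and hence degree $1$ in $p_{j+1}$ after applying $\beta_{j+1}$, cascading the constraint. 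Iterating on $j = N-1, N-2, \ldots, 1$ with the hierarchy $R_1 \gg \cdots \gg R_N \gg 1$ chosen so each truncation remains inside the region carved out by the previous ones, I obtain a finite subcomplex $T_\cP$ indexed by $\cP(R_1,\ldots,R_N)$ whose inclusion is a quasi-isomorphism modulo a sub/quotient that is acyclic in $K_0(S)$.

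Coherence of $H^\pm(T)$ then follows because $T_\cP$ has finitely many terms, each a vector bundle on $S$. The $K_0$-identity \eqref{sumpolytope} is the Euler characteristic of this finite complex, which by additivity equals the alternating sum over $\cP(R_1,\ldots,R_N)$ with sign $(-1)^{q_1+\cdots+q_N}$ from the $\ZZ/2$-grading $T^\pm$. The main technical obstacle is verifying that the Koszul-type exactness arguments at each step $j$ interact correctly with the other morphisms $\alpha''_k$ and $\beta_k$ for $k \neq j$, i.e.~that the perturbation from the chosen piece of $\delta$ to the full $\delta$ does not destroy the acyclicity established on the associated graded. I expect this to be handled by a careful spectral-sequence argument (or equivalently a filtered-complex argument) whose $E_1$-page is controlled by the non-degeneracy of $\alpha'_j$, with the higher differentials strictly decreasing the multi-degree in a way that preserves the polytope constraints.
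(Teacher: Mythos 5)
The overall strategy — Koszul exactness from non-degeneracy (Green's theorem), followed by a filtration/spectral-sequence argument to cut the unbounded two-periodic complex down to the finite polytope — is the right one and is the paper's. But your sketch stops at the point where the real work begins, and several of the steps you describe do not hold. You treat $\cP(R_1,\ldots,R_N)$ as defining a subcomplex whose inclusion is a quasi-isomorphism; in fact the differential moves indices \emph{out} of the polytope ($\alpha'_{j-1}$ raises $q_j$ and $\alpha''_{j-1}$ raises $p_j$ without changing $p_k+q_k$ for $k>j$, so each increases the linear form defining $\cP_j(R_j)$ by $1$). The $\delta$-stable piece is the complement, and the theorem amounts to proving that complementary sub-object is \emph{exact} — not merely ``acyclic in $K_0$'', which for an unbounded complex of quasi-coherent sheaves is not a well-posed notion. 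Proving that exactness is where the difficulty lies: the pointwise Koszul vanishing from Green's theorem is governed by a condition $k_{j-1}-a_j a_{j+1}k_{j+1}>R_j$ on \emph{pairs} of coordinates in the changed grading (Lemma \ref{exactseq}), not by a single polytope inequality. The paper therefore partitions $\ZZ^{N+1}$ into pieces $\cQ_1,\ldots,\cQ_N$ with recursively shifted constants $\widetilde{R}_k = R_k + a_k a_{k+1}\widetilde{R}_{k+2}$, chosen so that on each $\cQ_j$ the Green's-theorem condition holds and $\cQ_j$ is stable under $\alpha_j$, $\beta$, and the later $\alpha_k$'s (Proposition \ref{puzzle}, Lemma \ref{goesout}); the pieces are then removed one at a time via the spectral-sequence Lemma \ref{spectral2}.

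Three further gaps. (i) The argument cannot run in the $\ZZ^{2N}$-grading: $\alpha'_j$ and $\alpha''_j$ have different $\ZZ^{2N}$-degrees, so $\delta$ is not homogeneous there; Step~1 passes to the coarser $\ZZ^{N+1}$-grading in which $\alpha_j=\alpha'_j+\alpha''_j$ has degree $\dw_j$ and all the $\beta_j$ share degree $\dw_{N+1}$ (this is forced by $\deg\alpha'_j+\deg\beta_{j+1}=\deg\alpha''_j+\deg\beta_j$). (ii) Your induction never starts: since $A_N=0$ forces $p_N\equiv 0$ and $q_N\leq\rank B_N$ automatically, the cut $p_N+q_N\leq R_N$ is vacuous for $R_N\geq\rank B_N$; the real use of $A_N=0$ is to make the final puzzle piece $\cQ_N$ empty so the decreasing induction of Step~4 terminates. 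Relatedly, the Koszul complex controlled by $\alpha'_{N-1}$ is built from $\overline{A}_{N-1,s}$ and $\overline{B}_{N,s}$, not from ``$A_N$ and $B_N$'' (which are zero and irrelevant, respectively, for this step). (iii) Nothing in your sketch shows the total complex in the changed grading is bounded in each total degree; this is Lemma \ref{finitesum}, it uses $a_j\geq 2$ in an essential way, and it is what makes $H^\pm(T)$ coherent and lets the spectral sequences of Lemma \ref{spectral2} converge.
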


We emphasize that the result does not depend on the values of the integers $R_1,\dotsc,R_N$, provided the integers $R_1-R_2,\dotsc,R_{N-1}-R_N$, and $R_N$ are large enough.
We leave the proof to Sect.~\ref{proof}.

\begin{dfn}\label{cvirrecursivdfn}
For a two-periodic complex $(T,\delta)$ with coherent cohomology, we define the virtual class as in \eqref{virtualcomplex} by
\begin{equation}\label{cvirrecursiv}
\cvir(T,\delta) = \Ch \left( H^+(T,\delta) - H^-(T,\delta) \right) \frac{\Td (B)}{\Td (A)} \in H^*(S,\CC).
\end{equation}
\end{dfn}

\begin{cor}\label{maincoro}
Let $T$ be a non-degenerate recursive complex, with the vanishing condition $A_N=0$, as in the above theorem.
Denote by $\AB_j$ the class of $\left[ A_j \rightarrow B_j \right]$ in the derived category $\cD(S)$.
Then the virtual class is
\begin{equation*}
\cvir(T)= \sum \Ch^\vee(\cS^{z_1}\AB_1) \dotsm \Ch^\vee(\cS^{z_N}\AB_N) \prod_{j=1}^N \frac{1}{\Td(\AB_j)},
\end{equation*}
where the sum is taken over all $(z_1,\dotsc,z_N) \in \mathbb{N}^N$ such that for each $j$ we have
\begin{equation}\label{domainofsum}
z_j - a_j z_{j+1} + \dotsb + (-a_j) \dotsm (-a_{N-1}) z_N \leq R_j
\end{equation}
and where $\Ch_l^\vee$ stands for $(-1)^l \Ch_l$.

In particular, the result is independent of the choice of a representative $\left[ A_j \rightarrow B_j \right]$ for the derived element $\AB_j$.
\end{cor}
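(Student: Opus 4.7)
The plan is to derive Corollary \ref{maincoro} from Theorem \ref{genus0theo} by a purely formal K-theoretic manipulation. Theorem \ref{genus0theo} identifies $[H^+(T)] - [H^-(T)] \in K_0(S)$ with the finite alternating sum \eqref{sumpolytope}, and the virtual class is defined in \eqref{cvirrecursiv} as $\cvir(T) = \Ch(H^+(T) - H^-(T)) \cdot \Td(B)/\Td(A)$. So the argument reduces to applying the Chern character to \eqref{sumpolytope}, multiplying by the Todd-class factor, and reorganizing the sum.

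The key step is a change of summation variables $z_j := p_j + q_j$. The defining inequalities of $\cP(R_1,\ldots,R_N)$ depend on $(\underline{p},\underline{q})$ only through $\underline{z}$, once one discards the bound $q_j \le \rank(B_j)$ included in $\cP^+$; this is harmless since $\Lambda^{q_j}B_j^\vee$ vanishes for larger $q_j$. The surviving constraint is exactly \eqref{domainofsum}. Because $T^{(\underline{p},\underline{q})}$ factorizes as a tensor product over $j$, for each fixed $\underline{z} \in \NN^N$ satisfying \eqref{domainofsum} the contribution factorizes into a product over $j$, and the $j$-th factor reads
\begin{equation*}
\sum_{\substack{p_j + q_j = z_j \\ p_j,q_j \ge 0}} (-1)^{q_j}\bigl[\cS^{p_j}A_j^\vee \otimes \Lambda^{q_j}B_j^\vee\bigr] \;=\; \bigl[\cS^{z_j}(A_j^\vee - B_j^\vee)\bigr] \;=\; \bigl[\cS^{z_j}(\AB_j^\vee)\bigr] \in K_0(S),
\end{equation*}
by the standard K-theoretic identity for the symmetric power of a difference of vector bundle classes (equivalently, by expanding the two-term complex recalled in Sect.~\ref{QST} that represents $\cS^{z_j}(\AB_j^\vee)$).

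Taking Chern characters yields $\Ch(\cS^{z_j}\AB_j^\vee) = \Ch^\vee(\cS^{z_j}\AB_j)$ via $\Ch(V^\vee) = \Ch^\vee(V)$ together with the commutation of duality and symmetric power. The Todd factor splits as $\Td(B)/\Td(A) = \prod_j \Td(B_j)/\Td(A_j) = \prod_j \Td(\AB_j)^{-1}$ by multiplicativity of $\Td$ and the relation $\AB_j = [A_j] - [B_j]$ in K-theory. Assembling produces the formula announced in Corollary \ref{maincoro}. The ``in particular'' clause is then automatic: every ingredient on the right-hand side, namely $\cS^{z_j}\AB_j$, $\Ch^\vee$, and $\Td$, depends on $\AB_j$ only through its derived-category (equivalently, K-theoretic) class, not through a chosen resolution $[A_j \to B_j]$. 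There is no substantial obstacle in this approach; the only delicate bookkeeping is to verify that the polytope \eqref{polytope} really does depend on $(\underline{p},\underline{q})$ only through the sums $p_j + q_j$, which is immediate from its definition.
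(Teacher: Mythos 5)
Your proposal is correct and follows essentially the same route as the paper: pushing the sum forward along the map $(\underline{p},\underline{q})\mapsto \underline{z}=(p_1+q_1,\ldots,p_N+q_N)$, observing that the $\cP^+$-constraint is absorbed by the vanishing of $\Lambda^{q_j}B_j^\vee$, and then recognizing each inner sum $\sum_{p+q=z}(-1)^q\Ch(\cS^pA_j^\vee)\Ch(\Lambda^qB_j^\vee)$ as $\Ch^\vee(\cS^z\AB_j)$. The only cosmetic difference is that you phrase this step as the $\lambda$-ring identity $\cS^z(A^\vee-B^\vee)$ while the paper invokes the same equality directly at the level of Chern characters using the two-term-complex definition of $\cS^k[A\to B]$ recalled in Sect.~1.1.
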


\begin{proof}
By Theorem \ref{genus0theo}, the virtual class equals
\begin{equation*}
\cvir(T)= \sum_{(\underline{p},\underline{q}) \in \bigcap_{j=1}^N \cP_j(R_j)} (-1)^{q_1+\dotsb+q_N} \Ch ( T^{(\underline{p},\underline{q})} ) \prod_{j=1}^N \frac{1}{\Td(\AB_j)},
\end{equation*}
where the sum is taken over $\bigcap_{j=1}^N \cP_j(R_j)$ because the vector bundle $T^{(\underline{p},\underline{q})}$ vanishes outside of $\cP^+$.
Consider the function $f \colon \NN^{2N} \rightarrow \NN^N$ which sends $(\underline{p},\underline{q})$ to $(\underline{z})$ with $z_j=p_j+q_j$,
and observe that $f(\cP_j(R_j))$ is exactly the subset delimited by 
\begin{equation*}
z_j - a_j z_{j+1} + \dotsb + (-1)^{N-j} a_j \dotsm a_{N-1} z_N \leq R_j.
\end{equation*}
Thus the virtual class is
\begin{equation*}
\cvir(T)  =  \sum_{(\underline{z}) \in f(\bigcap_{j=1}^N \cP_j(R_j))} \sum_{(\underline{p},\underline{q}) \in f^{-1}(\underline{z})} (-1)^{q_1+\dotsb+q_N} \Ch ( T^{(\underline{p},\underline{q})} ) \prod_{j=1}^N \frac{1}{\Td(\AB_j)}.
\end{equation*}
The corollary follows from the equality (see Sect.~\ref{term})
\begin{eqnarray*}
\textrm{Ch}^\vee(\cS^k\AB_j) & = & \textrm{Ch}^\vee(\cS^k\left[ A_j \rightarrow B_j \right]) \\
& = & \sum_{p+q=k} (-1)^q \textrm{Ch}(\cS^p(A_j)^\vee)\textrm{Ch}(\Lambda^q(B_j)^\vee).
\end{eqnarray*}
\end{proof}

\begin{cor}[Concave case]
Assume that all the vector bundles $A_1, \dotsc, A_N$ vanish.
Then the virtual class reduces to
\begin{equation*}
\cvir(T) = \sum_{q \geq 0} (-1)^q \Ch(\Lambda^q B^\vee) \Td(B) = \textrm{c}_{\textrm{top}}(B).
\end{equation*}
\end{cor}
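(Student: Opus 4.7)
The plan is to specialize Corollary~\ref{maincoro} to the hypothesis $A_1 = \cdots = A_N = 0$. Under this assumption, $\AB_j = [0 \to B_j]$ represents $-B_j$ in K-theory, so $\cS^{p_j}(A_j)^\vee$ is nonzero only for $p_j = 0$. In the reindexing $z_j = p_j + q_j$ used in Corollary~\ref{maincoro}, this forces $z_j = q_j$. Consequently $\Ch^\vee(\cS^{z_j}\AB_j)$ collapses to $(-1)^{z_j}\Ch(\Lambda^{z_j} B_j^\vee)$ and $1/\Td(\AB_j)$ simplifies to $\Td(B_j)$, so the general formula degenerates into one involving only exterior powers of the $B_j^\vee$.

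Next I would argue that the summation constraints \eqref{domainofsum} become vacuous. Since $\Lambda^{z_j} B_j^\vee = 0$ for $z_j > \rank(B_j)$, every nonzero summand has each $z_j$ bounded by $\rank(B_j)$; hence for $R_1 \gg \cdots \gg R_N \gg 1$ sufficiently large the inequalities \eqref{domainofsum} are automatically satisfied. The sum therefore becomes an unconstrained one over $(z_1, \ldots, z_N) \in \NN^N$ and factors as
$$\cvir(T) = \prod_{j=1}^N \biggl( \sum_{z \geq 0} (-1)^{z} \Ch(\Lambda^{z} B_j^\vee) \, \Td(B_j) \biggr).$$

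Finally, I would identify each factor with $c_{\mathrm{top}}(B_j)$ by invoking the classical identity $\lim_{t \to 1}\fc_t(V) = c_{\mathrm{top}}(V)$ recalled in Section~\ref{0.5b} (equivalently, the computation of the Euler class via the Koszul resolution of the zero section). Multiplicativity of the top Chern class under direct sums then gives $\prod_j c_{\mathrm{top}}(B_j) = c_{\mathrm{top}}(B)$, and regrouping the product by total exterior degree $q = z_1 + \cdots + z_N$, together with the canonical decomposition $\Lambda^q B^\vee = \bigoplus_{z_1+\cdots+z_N = q} \Lambda^{z_1} B_1^\vee \otimes \cdots \otimes \Lambda^{z_N} B_N^\vee$, recovers the stated form $\sum_{q \geq 0}(-1)^q \Ch(\Lambda^q B^\vee)\Td(B)$. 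There is no genuine obstacle here: the corollary is essentially a bookkeeping specialization of Corollary~\ref{maincoro}, the only substantive input being the classical expression of the top Chern class as an alternating sum of Chern characters of exterior powers.
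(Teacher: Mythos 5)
Your proof is correct and follows exactly the route the paper implicitly intends: the corollary is stated without proof immediately after Corollary~\ref{maincoro}, precisely because it is the direct specialization you carry out. Setting $A_j=0$ forces $p_j=0$ so that $\Ch^\vee(\cS^{z_j}\AB_j)=(-1)^{z_j}\Ch(\Lambda^{z_j}B_j^\vee)$ and $1/\Td(\AB_j)=\Td(B_j)$, the constraints \eqref{domainofsum} are vacuous because the nonzero summands already have $z_j\leq\rank(B_j)$ and the $R_j$ may be chosen arbitrarily large, the resulting unconstrained sum factors over $j$, and each factor is identified with $c_{\mathrm{top}}(B_j)$ by the Borel--Serre identity recalled in Sect.~\ref{0.5b}.
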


\subsection{Proof of Theorem \ref{genus0theo}}\label{proof}
Our strategy is to write the quasi-coherent sheaf $T$ as a direct sum (of sheaves and not of two-periodic complexes)
\begin{equation*}
T = T_0 \oplus T_1 \oplus \dotsb \oplus T_N
\end{equation*}
with $T_0$ a coherent locally free sheaf.
For any $j \geq 1$, the subsheaf $T_j$ satisfy a stability condition (see Definition \ref{stable}) with respect to the morphisms $\alpha_j$ and $\beta$, and vanishes in cohomology, that is,
\begin{equation*}
H(T_j,\alpha_j+\beta)=0.
\end{equation*}
We take advantage of this situation to remove each piece $T_j$ but $T_0$. The remaining evaluation in K-theory is exactly \eqref{sumpolytope}.
The proof is in four steps.

In Step $1$, we change the natural $\ZZ^{2N}$-grading of the sheaf $T$ into a $\ZZ^{N+1}$-grading so that the multi-degrees of the morphisms $\alpha_1,\dotsc,\alpha_N$ and $\beta$ form the canonical basis of $\ZZ^{N+1}$.
Although only the grading is changing and not the sheaf, we prefer to write $K$ instead of $T$ when we deal with the $\ZZ^{N+1}$-grading.

In Step $2$, we use \cite[Theorem 2]{Green} to find for each $j$ some planes in $\ZZ^{N+1}$ with the following property.
The sheaf of elements with multi-degrees in these planes, together with the morphisms $\beta$ and $\alpha_j$, is a double complex whose cohomology vanishes.
We call these planes exact and illustrate this in Example \ref{Ndeux} for $N=2$.

In Step $3$, we cut $\ZZ^{N+1}$ into a puzzle with pieces $\cQ_0,\dotsc,\cQ_N$ where, for $j \geq 1$, each $\cQ_j$ is made of some exact planes with respect to the morphisms $\beta$ and $\alpha_j$.
Altogether, the pieces cover $\ZZ^{N+1}$ and do not overlap. The subsheaf whose element have multi-degree in $\cQ_j$ is $K_j$ (or $T_j$ if we work with the $\ZZ^{2N}$-grading).
We illustrate the case $N=3$ (see Example \ref{Ntrois}).

In Step $4$, we use an argument relying on spectral sequences (see Lemma \ref{spectral2}) to remove the subsheaf $T_1$. Then we remove the subsheaf $T_2$ and so on, until there remains only $T_0$ and $T_N$.
To conclude, we need the vanishing condition $A_N=0$ which implies that $T_N$ is empty.

\noindent
\textit{Step $1$: a change of grading.}
Let $\du_1,\dotsc,\du_N,\dv_1,\dotsc,\dv_N$ be the canonical basis of $\ZZ^{2N}$ with the coordinates $(\underline{p},\underline{q})$ and let  $\dw_1,\dotsc,\dw_{N+1}$ be the canonical basis of $\ZZ^{N+1}$ with the coordinates $(\underline{k},l)$.
The quasi-coherent sheaf $T$ is $\ZZ^{2N}$-graded and the degrees of the morphisms $\beta_j$, $\alpha'_j$ and $\alpha''_j$ are
\begin{eqnarray*}
\deg (\beta_j) & = & \du_j - \dv_j, \\
\deg (\alpha'_j) & = & a_j \du_j + \dv_{j+1}, \\
\deg (\alpha''_j) & = & (a_j-1)\du_j + \du_{j+1} + \dv_j,
\end{eqnarray*}
where we use the cyclic convention $\du_{N+1}=\du_1$ and $\dv_{N+1}=\dv_1$.
The lattice $L$ generated by these $3N$ vectors is a sub-lattice of $\ZZ^{2N}$ and we fix a finite subset $\cE$ of $\ZZ^{2N}$ such that
\begin{equation*}
\ZZ^{2N} = \bigsqcup_{\ccE \in \cE} (\ccE + L) \qquad \textrm{and} \qquad (p,q) \in \cE \implies q \textrm{ is even}.
\end{equation*}

For simplicity, we prefer to work with a $\ZZ^{N+1}$-grading on $T$ such that each morphism $\beta_j$ has degree $\dw_{N+1}$ and each morphism $\alpha_j=\alpha'_j+\alpha''_j$ has degree $\dw_j$.
Notice that a grading assigning the same degree to $\alpha'_j$ and $\alpha''_j$ should assign the same degree to $\beta_j$ and $\beta_{j+1}$.
This happens because
\begin{equation*}
\deg (\alpha'_j) + \deg (\beta_{j+1}) = \deg (\alpha''_j) + \deg (\beta_j).
\end{equation*}
For any $\ccE \in \cE$ and $(\underline{k},l) \in \ZZ^{N+1}$, we consider the direct sum
\begin{equation}\label{Kdef}
(K_{\ccE})^{\underline{k},l} := \bigoplus_{(\underline{\lambda}) \in \ZZ^{N-1}}   T^{\ccE+\mathcal{A} \cdot (\underline{k},l,\underline{\lambda})}
\end{equation}
where $\mathcal{A} \cdot (\underline{k},l,\underline{\lambda})$ is (with only non-zero entries represented)

\begin{center}
\begin{tikzpicture}[scale=2.5]
\node (a1) at (0.25,1.525) {};
\draw (0.27,1.525) node{$a_1$};
\node (aN) at (0.85,0.925) {};
\draw (0.82,0.925) node{$a_N$};
\node (b1) at (0.25,0.7) {$1$};
\node (bN) at (0.75,0.2) {$1$};
\node (bNN) at (0.85,0.8) {$1$};

\node (l1) at (0.97,1.525) {$1$};
\node (lN) at (0.99,0.8) {$1$};
\draw (0.91,0.8)--(0.95,0.8);

\node (aa1) at (1.1,1.525) {$1$};
\draw (1.02,1.525)--(1.06,1.525);
\node (aaa1) at (1.1,1.425) {$1$};
\node (aaN) at (1.6,1.025) {$1$};
\draw (1.52,1.025)--(1.56,1.025);
\node (aaaN) at (1.6,0.925) {$1$};
\node (bb1) at (1.1,0.8) {$1$};
\node (bbb1) at (1.1,0.7) {$1$};
\draw (1.02,0.7)--(1.06,0.7);
\node (bbN) at (1.6,0.3) {$1$};
\node (bbbN) at (1.6,0.2) {$1$};
\draw (1.52,0.2)--(1.56,0.2);

\draw[dotted] (a1) -- (aN);
\draw[dotted] (b1) -- (bN);
\draw (bNN);
\draw (l1);
\draw (lN);
\draw[dotted] (aa1) -- (aaN);
\draw[dotted] (bb1) -- (bbN);
\draw[dotted] (aaa1) -- (aaaN);
\draw[dotted] (bbb1) -- (bbbN);

\draw (0.2,0.15) -- (0.2,1.6) -- (1.65,1.6) -- (1.65,0.15) -- cycle;
\draw (0.2,0.875) -- (1.65,0.875);
\draw (0.9,1.6) -- (0.9,0.15);
\draw (1.02,1.6) -- (1.02,0.15);

\draw (1.835,0.15) -- (1.835,1.6) -- (2.155,1.6) -- (2.155,0.15) -- cycle;
\draw (1.835,0.875) -- (2.155,0.875);
\draw (1.835,0.76) -- (2.155,0.76);
\draw (2.4,0.15) -- (2.4,1.6) -- (2.6,1.6) -- (2.6,0.15) -- cycle;
\draw (2.4,0.875) -- (2.6,0.875);

\node (k1) at (2,1.525) {$k_1$};
\node (kN) at (2,0.945) {$k_N$};
\draw (2,0.814) node{$l$};
\node (la1) at (2,0.697) {$\lambda_1$};
\node (laN) at (2,0.215) {};
\draw (2,0.215) node{$\lambda_{N-1}$};

\node (p1) at (2.5,1.525) {$p_1$};
\node (pN) at (2.5,0.925) {$p_N$};
\node (q1) at (2.5,0.795) {$q_1$};
\node (qN) at (2.5,0.2) {$q_N$};

\draw (1.74,0.875) node{$\cdot$};
\draw (2.275,0.875) node{$=$};
\draw (2.75,0.875) node{$- \ e$};

\draw[dotted] (k1) -- (kN);
\draw[dotted] (la1) -- (laN);
\draw[dotted] (p1) -- (pN);
\draw[dotted] (q1) -- (qN);
\end{tikzpicture}
\end{center}

\noindent
Observe that the determinant of the matrix $\mathcal{A}$ is
\begin{equation*}
\det \mathcal{A} = (-1)^N a_1 \dotsm a_N - 1 \neq 0, \qquad \textrm{(because $a_1,\dotsc,a_N \geq 2$).}
\end{equation*}
By looking at the columns of $\mathcal{A}$, it is straightforward to check that for each $j$
\begin{equation*}
\beta_j \colon (K_{\ccE})^{\underline{k},l} \rightarrow (K_{\ccE})^{(\underline{k},l)+\dw_{N+1}} \qquad \textrm{and} \qquad \alpha_j \colon (K_{\ccE})^{\underline{k},l} \rightarrow (K_{\ccE})^{(\underline{k},l)+\dw_j}.
\end{equation*}
Notice that the direct sum \eqref{Kdef} is finite because the sheaf $T^{(\underline{p},\underline{q})}$ is nonzero only when $0 \leq q_j \leq \rank B_j$ for each $j$ and because we have
\begin{eqnarray*}
q_1 & = & k_N-l+\lambda_1 + e_{N+1} \\
q_2 & = & k_1-\lambda_1+\lambda_2 + e_{N+2} \\
& \dotsb & 
\end{eqnarray*}
Thus the locally free sheaf $(K_{\ccE})^{\underline{k},l}$ is coherent.
The total complex $K_{\ccE}^{\textrm{tot}}$ is
\begin{equation}\label{totalcomplex}
\left( K_{\ccE}^{\textrm{tot}} \right)^m := \bigoplus_{k_1+\dotsb+k_N+l=m} (K_{\ccE})^{k_1,\dotsc,k_N,l}, \qquad \delta \colon \left( K_{\ccE}^{\textrm{tot}} \right)^m \rightarrow \left( K_{\ccE}^{\textrm{tot}} \right)^{m+1}
\end{equation}
and the associated two-periodic complex
\begin{equation*}
\bigoplus_{\substack{m \in \ZZ \\ \ccE \in \cE}} \left( K_{\ccE}^{\textrm{tot}} \right)^{2m} \leftrightarrows \bigoplus_{\substack{m \in \ZZ \\ \ccE \in \cE}} \left( K_{\ccE}^{\textrm{tot}} \right)^{2m+1} \qquad \textrm{equals} \qquad (T,\delta).
\end{equation*}

We define a surjective function
\begin{equation*}
\Psi \colon \ZZ^{2N} \rightarrow \ZZ^{N+1} \times \cE
\end{equation*}
which sends $(\underline{p},\underline{q})$ to the unique $(\underline{k},l,\ccE)$ such that there is $\underline{\lambda} \in \ZZ^{N-1}$ with
\begin{equation*}
\mathcal{A}\cdot (\underline{k},l,\underline{\lambda}) = (\underline{p},\underline{q}) - \ccE.
\end{equation*}
For any $(\underline{p},\underline{q}) \in \ZZ^{2N}$ and $(\underline{k},l,\ccE) \in \ZZ^{N+1} \times \cE$ such that $\Psi(\underline{p},\underline{q})=(\underline{k},l,\ccE)$, we have
\begin{equation}\label{Psi map}
\begin{array}{rcl}
k_{j-1}+a_j k_j & = & p^\ccE_j+q^\ccE_j \qquad \qquad \textrm{for any } j, \\
k_1 + \dotsb + k_N - l & = &  q^\ccE_1 + \dotsb + q^\ccE_N,
\end{array}
\end{equation}
where $(\underline{p}^\ccE,\underline{q}^\ccE):=(\underline{p},\underline{q})-\ccE$ and with the convention $k_0=k_N$.
We define the subset $\cQ$ of $\ZZ^{N+1}$ by
\begin{equation}\label{cQ}
\cQ := \biggl\lbrace (\underline{k},l) \left| \right. \forall j, ~~  0 \leq k_{j-1}+a_j k_j \textrm{ and } 0 \leq \sum_{j=1}^N k_j -l \leq \sum_{j=1}^N \rank (B_j) \biggr\rbrace.
\end{equation}
Since the set $\Psi (\cP^+)$ is included in $\cQ \times \cE$ by \eqref{Psi map}, the vector bundle $(K_{\ccE})^{\underline{k},l}$ vanishes outside $\cQ$.

\begin{lem}\label{finitesum}
Since we have $a_j \geq 2$ for every $j$, the direct sum \eqref{totalcomplex} is finite.
\end{lem}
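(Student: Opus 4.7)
The plan is to show that for each fixed $m$, the set of tuples $(\underline{k},l)\in\cQ$ satisfying $k_1+\dotsb+k_N+l=m$ is finite; combined with the fact that the summand $(K_\ccE)^{\underline{k},l}$ vanishes outside $\cQ$, this will give the lemma.

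First I would bound the total $\Sigma:=k_1+\dotsb+k_N$. From the constraint $0\leq\Sigma-l\leq\sum_j\rank(B_j)$ together with $\Sigma+l=m$, one immediately gets
\begin{equation*}
\frac{m}{2}\leq \Sigma\leq \frac{m+\sum_j\rank(B_j)}{2},
\end{equation*}
so $\Sigma$ takes only finitely many values, and thus $l=m-\Sigma$ is also confined to a bounded range.

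The heart of the argument is to upgrade this bound on $\Sigma$ to a bound on each individual $|k_j|$, and here the hypothesis $a_j\geq 2$ plays the crucial role. Writing $J^-:=\{j\mid k_j<0\}$ and setting $S_+:=\sum_{k_j\geq 0}k_j$, $S_-:=-\sum_{k_j<0}k_j$, so $\Sigma=S_+-S_-$, the cyclic inequality $k_{j-1}+a_jk_j\geq 0$ forces, for every $j\in J^-$,
\begin{equation*}
k_{j-1}\geq a_j|k_j|\geq 2|k_j|>0.
\end{equation*}
In particular no two consecutive indices (cyclically) belong to $J^-$, so the map $j\mapsto j-1$ is injective on $J^-$ with image disjoint from $J^-$. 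Summing the previous inequalities over $j\in J^-$ gives $S_+\geq 2S_-$, and combined with $S_+-S_-\leq \Sigma\leq C$ one obtains $S_+\leq 2C$ and $S_-\leq C$. Hence each $|k_j|$ is bounded by $S_++S_-\leq 3C$, leaving only finitely many admissible tuples $(\underline{k},l)$.

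The main (though modest) obstacle is the cyclic bookkeeping in the last step: one must be careful that the indices $\{j-1:j\in J^-\}$ do not meet $J^-$, which is precisely what ensures that the positive mass forced by the negative entries is not itself being counted as negative mass somewhere else. The strict inequality $a_j\geq 2$ is essential — with $a_j=1$ one could have arbitrarily long negative/positive oscillations of bounded sum, and the finiteness would fail.
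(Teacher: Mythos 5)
Your proof is correct and follows essentially the same route as the paper's: bound $\Sigma=k_1+\dotsb+k_N$ (hence $l$) from the two constraints of $\cQ$, then use $k_{j-1}\geq -a_j k_j\geq -2k_j$ to show that the positive part dominates twice the negative part, forcing both to be bounded. You are in fact slightly more careful than the paper in spelling out why the cyclic shift $j\mapsto j-1$ is injective on $J^-$ with image disjoint from $J^-$, which is what justifies the inequality $S_+\geq 2S_-$ (the paper's $k^+\geq -2k^-$) without double-counting.
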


\begin{proof}
Fix the integer $m$.
We show there is a finite number of $(\underline{k},l) \in \cQ$ satisfying
\begin{equation}\label{summ}
k_1+\dotsb+k_N+l=m.
\end{equation}
For any $(\underline{k},l) \in \cQ$, we have
\begin{equation*}
0 \leq k_1+\dotsb+k_N-l \leq \textrm{rank}(B_1 \oplus \dotsb \oplus B_N),
\end{equation*}
so that the number of possible values for $l$ under \eqref{summ} is finite.

For any $(\underline{k},l) \in \cQ$, consider the sum $k^+$ of all positive integers among $k_1,\dotsc,k_N$ and the sum $k^-$ of all negative integers among them.
The condition \eqref{summ} becomes $k^++k^-+l=m$, and if there is a finite number of possible values for $k^+$ or for $k^-$ under \eqref{summ}, then it is clear that the subset of $\cQ$ satisfying \eqref{summ} is finite.
For any $(\underline{k},l) \in \cQ$, observe
\begin{equation*}
k_{j-1} \geq -a_j k_j \geq -2 k_j,
\end{equation*}
so that if $k_j$ contributes to $k^-$ (i.e.~$k_j$ is negative), then $k_{j-1}$ contributes to $k^+$ (i.e.~$k_{j-1}$ is positive).
Thus we get $k^+ \geq - 2 k^-$ and
\begin{equation*}
k_1+\dotsb+k_N := k^+ + k^- \geq - k^-.
\end{equation*}
Consequently, the set of values for $k^-$ under \eqref{summ} is finite.
\end{proof}

\noindent
\textit{Step $2$: exact sequences.}
When we fix all the coordinates of $\ZZ^{N+1}$ but $k_j$ and $l$, we get a double complex $(K_\ccE^{\underline{k},l}; \alpha_j, \beta)$.
Over a geometric point $s \in S$, denote its cohomology along $\beta_s$ and then along $\alpha_{j,s}$ by
\begin{equation*}
H(H((K_{\ccE}^{\underline{k},l})_s,\beta_s),\alpha_{j,s}).
\end{equation*}

\begin{lem}\label{exactseq}
For each $j$, there is a constant $R_j \geq 0$ such that for every geometric point $s$ in $S$ and for every $(\underline{k},l,\ccE) \in \ZZ^{N+1} \times \cE$, we have
\begin{equation}\label{exactseq2}
H(H((K_\ccE^{\underline{k},l})_s,\beta_s),\alpha_{j,s})=0,
\end{equation}
whenever $k_{j-1}-a_j a_{j+1} k_{j+1} >R_j$. Moreover, we take $R_N \geq \rank(B_N)$.
\end{lem}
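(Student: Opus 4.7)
The plan is to compute the iterated cohomology fibrewise in two stages, exploiting the double-complex structure $(K_\ccE^{\underline{k},l}; \beta, \alpha_j)$. First I would handle the $\beta$-cohomology. Reading off the matrix $\mathcal{A}$, each summand $\beta_i \colon B_i^\vee \to A_i^\vee$ preserves the total degree $p_i + q_i = k_{i-1} + a_i k_i$, so the $\beta$-differential splits as a tensor product, over $i$, of the finite Koszul-type complexes built from $\beta_i$ on $\cS^{p_i} A_i^\vee \otimes \Lambda^{q_i} B_i^\vee$ at fixed $p_i + q_i$. These are exactly the standard complexes computing the derived symmetric power $\cS^{p_i + q_i}$ of the two-term complex $[B_i^\vee \to A_i^\vee]$. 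Fibrewise at a geometric point $s \in S$, their cohomology is expressible in terms of the reduced bundles $\overline{A}_{i,s} = \ker \beta_{i,s}^\vee$ and $\overline{B}_{i,s} = \mathrm{coker}\, \beta_{i,s}^\vee$ of Definition \ref{non-degenera}, yielding direct sums of $\cS^{\bar p_i} \overline{A}_{i,s}^\vee \otimes \Lambda^{\bar q_i} \overline{B}_{i,s}^\vee$ subject to $\bar p_i + \bar q_i = k_{i-1} + a_i k_i$.

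Next I would compute the $\alpha_j$-cohomology on this fibrewise reduction. The decomposition $\alpha_j = \alpha_j' + \alpha_j''$ of \eqref{alpha'} descends, via the commutative diagram following \eqref{Serreduality}, to a differential whose leading piece is multiplication by the section $\overline{\alpha}_{j,s}' \in \cS^{a_j} \overline{A}_{j,s}^\vee \otimes \overline{B}_{j+1,s}^\vee$. By non-degeneracy, this section induces a base-point-free linear subsystem $\overline{B}_{j+1,s} \hookrightarrow H^0(\PP(\overline{A}_{j,s}), \cO(a_j))$, and Green's syzygy theorem \cite[Theorem 2]{Green} provides a Koszul-type vanishing in $\cS^\bullet \overline{A}_{j,s}^\vee$ above an explicit degree threshold depending only on $\rank \overline{A}_{j,s}$ and on the number of wedges in $\Lambda^\bullet \overline{B}_{j+1,s}^\vee$. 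Uniformity of the threshold in $s$ follows from the bounds $\rank \overline{A}_{j,s} \leq \rank A_j$ and $\rank \overline{B}_{j,s} \leq \rank B_j$. Translating into the $(\underline{k}, l)$-parameters, the direction $\alpha_j$ raises the symmetric-power index $p_j + q_j = k_{j-1} + a_j k_j$ by $a_j$ and raises the exterior-power index $p_{j+1} + q_{j+1} = k_j + a_{j+1} k_{j+1}$ by $1$; so Green's exactness kicks in once $(k_{j-1} + a_j k_j) - a_j (k_j + a_{j+1} k_{j+1}) = k_{j-1} - a_j a_{j+1} k_{j+1}$ exceeds a threshold, which is precisely the stated inequality. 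For $j = N$ the cyclic neighbour $k_{N+1} = k_1$ still appears, but the relevant exterior-power index on the $\overline{B}_N^\vee$-side is bounded by $\rank \overline{B}_{N,s} \leq \rank B_N$, so the choice $R_N \geq \rank(B_N)$ trivially suffices to kill the complex once it runs out of admissible multidegrees.

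The main obstacle lies in the first stage: making the fibrewise $\beta$-cohomology calculation sufficiently explicit, and verifying that the induced $\alpha_j$-differential on this cohomology really factors through the section $\overline{\alpha}_j'$ of Definition \ref{non-degenera}. This is a spectral-sequence bookkeeping combined with the compatibility provided by the diagram following \eqref{Serreduality}, and is the essential content of the lemma. Once this identification is in place, the appeal to Green's syzygy theorem is standard and the numerical translation of its threshold into the single linear inequality $k_{j-1} - a_j a_{j+1} k_{j+1} > R_j$ is routine, so the constant $R_j$ can be taken as an explicit polynomial expression in $\rank A_i, \rank B_i, a_j$ (and $R_N = \rank B_N$).
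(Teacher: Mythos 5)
Your strategy matches the paper's: compute the fibrewise $\beta$-cohomology via the replacement $A_{j,s}, B_{j,s} \rightsquigarrow \overline{A}_{j,s}, \overline{B}_{j,s}$ (this is exactly \cite[Theorem 3.3.1]{ChiodoJAG}, which the paper cites), then apply Green's syzygy theorem \cite[Theorem 2]{Green} to the base-point-free system coming from non-degeneracy, and finally translate the degree threshold into the inequality $k_{j-1} - a_j a_{j+1} k_{j+1} > R_j$ using the matrix $\mathcal{A}$. The translation and the uniformity argument via $\rank\overline{A}_{j,s}\leq\rank A_j$, $\rank\overline{B}_{j,s}\leq\rank B_j$ are also correct and agree with the paper.

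However, there is a genuine gap in the handling of $\alpha''_j$. You write that the reduced differential has $\overline{\alpha}'_{j,s}$ as its ``leading piece'' and appeal to ``spectral-sequence bookkeeping,'' but Green's theorem only kills the complex whose differential is $\alpha'_{j,s}$ alone; one still needs to show that $H(\overline{K}_s,\alpha'_{j,s})=0$ forces $H(\overline{K}_s,\alpha'_{j,s}+\alpha''_{j,s})=0$. This cannot be dismissed as routine because the associated bicomplex $(C;f,g)$ with $f=\alpha'_{j,s}$, $g=\alpha''_{j,s}$ is \emph{a priori} unbounded in the auxiliary index $\lambda_j$, so the column filtration spectral sequence does not obviously converge. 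The paper circumvents this with a direct diagram chase: given $c$ with $(f+g)(c)=0$, the finiteness of the direct sum \eqref{Kdef} guarantees a minimal nonzero component $c_w$, and one builds a primitive $c'$ inductively from $c'_w$ using the exactness of $f$ alone; this is the real content of the step you label as ``bookkeeping.'' You also misplace the difficulty --- you say the main obstacle is identifying the $\beta$-cohomology, but that is outsourced to \cite{ChiodoJAG}; the obstacle is the $\alpha''_j$ argument just described. Finally, a minor point: the side condition $R_N\geq\rank(B_N)$ is not a consequence of the Green bound for $j=N$; it is an extra requirement imposed freely at the end, whose only role is later in Proposition \ref{puzzle} (to make $K^{\underline{k},l}=0$ directly from $A_N=0$ when $\epsilon_N=-1$), so your justification of it is off-target even though the freedom to enlarge $R_N$ makes it harmless.
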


\begin{proof}
We omit the index $\ccE$ in $(K_\ccE^{\underline{k},l})_s$ to simplify notations.
Following the proof of \cite[Theorem 3.3.1]{ChiodoJAG}, we see that 
\begin{equation}\label{pointwise}
H_{\beta_s}(K^{\underline{k},l}_s)=\overline{K}_s^{(\underline{k},l)},
\end{equation}
where $\overline{K}_s^{(\underline{k},l)}$ is the subspace of $K_s^{(\underline{k},l)}$ obtained when we replace the vector space $A_{j,s}$ (resp. $B_{j,s}$) by $\overline{A}_{j,s}$ (resp. $\overline{B}_{j,s}$) in the construction of $K_s^{(\underline{k},l)}$, with $\overline{A}_{j,s}$ the kernel of $\beta^\vee_{j,s}$ and $\overline{B}_{j,s}$ its cokernel.
Still as in the proof of \cite[Theorem 3.3.1]{ChiodoJAG}, we use the non-degeneracy condition, i.e.~
\begin{equation*}
\alpha'_{j,s} \colon \overline{B}_{j+1,s} \rightarrow \cS^{a_j} (\overline{A}_{j,s})^\vee = H^0(\PP(\overline{A}_{j,s}), \cO_{\PP(\overline{A}_{j,s})}(a_j)
\end{equation*}
is a base-point free linear system, to deduce from \cite[Theorem 2]{Green} that the complex
\begin{equation*}
(\cS^{p_j+a_j \cdot t} (\overline{A}_{j,s})^\vee \otimes \Lambda^t (\overline{B}_{j+1,s})^\vee)_t \qquad \textrm{with differential } \alpha'_{j,s}
\end{equation*}
is exact when $p_j$ is larger than $R'_{j,s} := a_j+\mathrm{rank}(\cS^{a_j}\overline{A}_{j,s})$.

By \eqref{Psi map}, the complex $((\overline{K}_s^{(\underline{k},l)+ t \cdot \dw_j})_t,\alpha'_{j,s})$ is bounded below by
\begin{equation*}
t \geq \ccE_{j+1}+\ccE_{N+j+1}-(k_j+ a_{j+1} k_{j+1}) \qquad \textrm{with $\ccE = (\ccE_1,\dotsc, \ccE_{2N}) \in \cE \subset \ZZ^{2N}$}
\end{equation*}
because otherwise we have $p_{j+1}+q_{j+1}<0$. For this value of $t$, we have
\begin{eqnarray*}
p_j+q_j & = & \ccE_j+\ccE_{N+j} + k_{j-1}+ a_j (k_j +\ccE_{j+1}+\ccE_{N+j+1}-(k_j+ a_{j+1} k_{j+1})) \\
& = & (k_{j-1}- a_j a_{j+1} k_{j+1}) + \ccE_j+\ccE_{N+j} + a_j(\ccE_{j+1}+\ccE_{N+j+1}).
\end{eqnarray*}
As a consequence, the complex $((\overline{K}_s^{(\underline{k},l)+ t \cdot \dw_j})_t,\alpha'_{j,s})$ is exact whenever
\begin{equation}\label{inef}
k_{j-1} - a_j a_{j+1} k_{j+1} > R_{j,s,\ccE},
\end{equation}
with $R_{j,s,\ccE} := R'_{j,s,\ccE} + \rank(\overline{B}_{j,s})-\ccE_j-\ccE_{N+j}-a_j(\ccE_{j+1}+\ccE_{N+j+1})$.

Now, we consider the double complex
\begin{equation*}
C^{u,v}:=\bigoplus_{(\lambda_1,\dotsc,\hat{\lambda}_j,\dotsc,\lambda_{N-1}) \in \ZZ^{N-2}}   \overline{T}_s^{\ccE+\mathcal{A} \cdot (\underline{k},l,\underline{\lambda})} \qquad \textrm{with }k_j:=u+v~,~~\lambda_j:=v,
\end{equation*}
where the horizontal differential is $f:=\alpha'_{j,s}$ and the vertical differential is $g:=\alpha''_{j,s}$. 
Assume the inequality \eqref{inef}.
We have just proved that the total cohomology of the double complex $(C;f,0)$ vanishes,
\begin{equation}\label{moche}
H^{\bullet}(C^\textrm{tot},f)=0
\end{equation}
and we want to prove that the total cohomology of $(C;f,g)$ also vanishes,
\begin{equation}\label{preuvtor}
H^{\bullet}(C^\textrm{tot},f+g)=0.
\end{equation}
Take an element
\begin{equation*}
c = \sum_{v \in \ZZ} c_v \qquad \textrm{with } c_v \in C^{k_j-v,v},
\end{equation*}
such that $f(c)+g(c)=0$.
Since the direct sum \eqref{Kdef} is finite, then there is an integer $w$ such that $c_v=0$ for $v < w$.
Observe that $f(c)+g(c)=0$ reads
\begin{equation*}
f(c_v)+g(c_{v-1})=0 ~, \qquad \textrm{for all } v.
\end{equation*}
Thus, we obtain $f(c_w)=0$, and by \eqref{moche}, there is $c'_w$ such that $f(c'_w) = c_w$.
Then,
\begin{equation*}
\begin{array}{lcl}
0 & = & f(c_{w+1})+g(c_w) \\
  & = & f(c_{w+1})+g(f(c'_w)) \\
  & = & f(c_{w+1})-f(g(c'_w)) \\
  & = & f(c_{w+1} - g(c'_w)), \\
\end{array}
\end{equation*}
and by \eqref{moche}, there is $c'_{w+1}$ such that $f(c'_{w+1}) = c_{w+1} - g(c'_w)$.
By induction we construct $c'_v$ for all $v \geq w$ and we set
\begin{equation*}
c'=\sum_{v \geq w} c'_v
\end{equation*}
to get $(f+g)(c')=c$. This proves \eqref{preuvtor} and we conclude with the following value for $R_j$, which is independent from the geometric point $s$ and from $\ccE \in \cE$,
\begin{equation*}
R_j := \textrm{max}_{\ccE \in \cE} \left\lbrace a_j+\mathrm{rank}(\cS^{a_j} A_j) + \rank(B_j) -\ccE_j-\ccE_{N+j}-a_j\ccE_{j+1}-a_j\ccE_{N+j+1}\right\rbrace,
\end{equation*}
and we eventually increase $R_N$ to have $R_N \geq \rank(B_N)$.
\end{proof}

\begin{exa}\label{Ndeux}
We illustrate Lemma \ref{exactseq} when $N=2$.
Since the vanishing condition \eqref{qqconcave} imposes $A_2=0$, there are three vector bundles $A_1,B_1$, and $B_2$ together with two morphisms $\beta_1$ and $\alpha_1$.
Here, $T$ decomposes into a direct sum over $r \in \QQ$ of double complexes
\begin{equation*}
\bigoplus_{p_1+q_1-a_1q_2=r} (T^{p_1,q_1,q_2} , \alpha_1,\beta_1).
\end{equation*}
given by planes directed by the vectors $(a_1,0,1)$ and $(1,-1,0)$ (see the next figure).
Since the integers $p_1,q_1$ and $q_2$ must be non-negative, the set of rational numbers $r$ contributing to this sum is bounded below.

\begin{center}
\begin{tikzpicture}[x= {(-1.333cm,-0.333cm)}, y={(1cm,0cm)}, z={(0cm,1cm)}]
\draw (0,1,0) -- (1,1,0) -- (1,0,0);
\draw[dotted] (1,0,0) -- (0,0,0) -- (0,1,0);
\fill[gray,opacity=0.1] (1,0,1) -- (0,0,2) -- (0,1,4) -- (1,1,3) -- cycle;
\draw (1,1,0)--(1,1,3)--(1,0,1)--(1,0,0);
\draw (1,1,3)--(0,1,4)--(0,1,0);
\draw[dotted] (1,0,1)--(0,0,2)--(0,1,4);

\draw (0,-0.05,2)--(0,0.05,2);
\draw (0,0,2) node[right] {$R_1$};

\draw[->,>=stealth] (1,0,1) -- (1,0.3,1.6);
\draw (1,0.2,1.4) node[left] {$\alpha_1$};

\draw[->,>=stealth] (1,0,1) -- (0.7,0,1.3);
\draw (0.95,0,1.05) node[right] {$\beta_1$};

\draw[->,>=stealth] (0,0,0) -- (0,0,0.5);
\draw[->,>=stealth] (0,0,0) -- (0,0.5,0);
\draw[->,>=stealth] (0,0,0) -- (0.5,0,0);
\draw (0.5,0,0) node[above] {$q_1$};
\draw (0,0.55,-0.1) node[above] {$q_2$};
\draw (0,-0.1,0.55) node[right] {$p_1$};
\draw[dotted] (0,0,0) -- (0,0,2);
\end{tikzpicture}
\end{center}

\noindent
Lemma \ref{exactseq} claims that for any $r>R_1$ the corresponding two-periodic complex is exact above any $s \in S$.
Since it is a two-periodic complex of coherent locally free sheaves, it is exact over the base scheme $S$.
Thus, the cohomology of $T$ is given by
\begin{equation*}
\bigoplus_{r \leq R_1} H\Bigl(\bigoplus_{p_1+q_1-a_1q_2=r} T^{p_1,q_1,q_2} , \alpha_1+\beta_1\Bigr).
\end{equation*}
Since the direct sum is finite, the evaluation in K-theory of $T$ equals
\begin{equation*}
\bigoplus_{r \leq R_1} \bigoplus_{p_1+q_1-a_1q_2=r} (-1)^{q_1+q_2} \left[ T^{p_1,q_1,q_2} \right]
\end{equation*}
and this coincides with \eqref{sumpolytope}.
\end{exa}

\noindent
\textit{Step $3$: the puzzle.}
In general, the two-periodic complex $T$ is not a direct sum of two-periodic complexes.
To understand how to deal with it, we treat an example.

\begin{exa}\label{Ntrois}
When $N=3$, the vanishing condition \eqref{qqconcave} implies $A_3=0$ and there remains five vector bundles, $A_1, A_2, B_1, B_2$, and $B_3$, with four morphisms.
Consider the coordinates $z_j=p_j+q_j$ for $j=1,\dotsc,3$ and assume for simplicity that the morphism $\alpha''_j$ in \eqref{alpha'} is zero (this is not the case in general).
Since the coordinate $z_j$ corresponds to
\begin{equation*}
\bigoplus_{p_j+q_j=z_j} \cS^{p_j} A_j^\vee \otimes \bigwedge\nolimits^{q_j} B_j^\vee,
\end{equation*}
we impose the condition $z_3 \leq R_3:= \rank (B_3)$.
We interpret Lemma \ref{exactseq} as follows.

We have two kind of exact lines, in the sense that the corresponding double complex is exact:
\begin{enumerate}
\item for all $z_1 > R_1$ and for all $z_3$, the line $(z_1,0,z_3)+ t \cdot (a_1,1,0)$ is exact,
\item for all $z_2 > R_2$ and for all $z_1$, the line $(z_1,z_2,0)+ t \cdot (0,a_2,1)$ is exact
\end{enumerate}
The idea is to get rid of a maximal set of exact and disjoint lines.
First, we fix $\widetilde{R}_1 \geq R_1$ and we consider all the exact lines of the first kind with $z_1 > \widetilde{R}_1$ and $z_3=0$.
Then we accept only the exact lines of the second kind with the extra condition $z_1 - a_1 z_2 \leq \widetilde{R}_1$, in order to avoid the overlaps.
Finally we consider all the lines of the first kind with $z_3>0$ and $z_1 + a_1 a_2 z_3 > \widetilde{R}_1$, in order to stick with the previous lines.
Notice that if $\widetilde{R}_1$ is greater than $R_1+a_1a_2R_3$, then the last lines we have drawn are exact.
We sum up this construction by defining two sets
\begin{displaymath}
\cQ_1 := \left\{\begin{array}{ll}
(z_1,0,z_3)+ t \cdot (a_1,1,0) \in \NN^3 \left| \right. &  z_1 + a_1 a_2 z_3 > R_1+a_1a_2R_3 ~, \\
&  z_3 \leq R_3 \textrm{ and } t \in \NN \\
\end{array} \right\},
\end{displaymath}
\begin{displaymath}
\cQ_2 := \left\{\begin{array}{ll}
(z_1,z_2,0)+ t \cdot (0,a_2,1) \in \NN^3 \left| \right. & z_1-a_1z_2 \leq R_1+a_1a_2R_3 ~, \\
& z_2 > R_2 ~,~~ z_3 \leq R_3 \textrm{ and } t \in \NN \\
\end{array} \right\}.
\end{displaymath}
The complement of the subsets $\cQ_1$ and $\cQ_2$ is the finite polytope represented in

\begin{center}
\begin{tikzpicture}[x= {(0.35cm,0cm)}, y={(0.123cm,0.123cm)}, z={(0cm,0.4cm)}]
\fill[gray,opacity=0.1] (0,0,3)--(2,0,3)--(18,8,3)--(0,8,3)--cycle; 
\fill[gray,opacity=0.4] (2,0,3)--(14,0,0)--(18,2,0)--(18,8,3)--cycle; 
\fill[gray,opacity=0.2] (0,2,0)--(18,2,0)--(18,8,3)--(0,8,3)--cycle; 

\draw (0,0,3)--(2,0,3)--(18,8,3)--(0,8,3)--cycle;
\draw (2,0,3)--(14,0,0)--(18,2,0)--(18,8,3)--cycle;

\draw[dashed] (2,0,0)--(2,0,3);

\draw[->] (0,0,0) -- (18,0,0);
\draw[dotted] (0,0,0) -- (0,2,0);
\draw[dotted] (18,2,0)--(0,2,0)--(0,8,3);
\draw[->] (0,0,0) -- (0,0,4);

\draw (2,0,0) node[below]{$R_1$} ;
\draw (2,-0.3,0)--(2,0.3,0) ;
\draw (0,2,-0.2) node[above]{$R_2$} ;
\draw (-0.2,2,0)--(0.2,2,0) ;
\draw (0,0,3) node[left]{$R_3$} ;
\draw (-0.2,0,3)--(0.2,0,3) ;
\end{tikzpicture}
\end{center}
\noindent
and it coincides with the subset of Theorem \ref{genus0theo},
\begin{displaymath}
\cQ_0 := \left\{\begin{array}{ll}
(z_1,z_2,z_3) \in \NN^3 \left| \right. & z_1-a_1 z_2+a_1a_2z_3 \leq R_1+a_1a_2R_3  ~,~~  \\
 & z_2-a_2 z_3 \leq R_2 \textrm{ and } z_3 \leq R_3 \\
\end{array} \right\}.
\end{displaymath}
\end{exa}

We go back to the proof of Theorem \ref{genus0theo} and we give a definition of the subsets $\cQ_j$ in terms of the coordinates $(k_1,\dotsc,k_N,l)$.
Let $R_1,\dotsc,R_N$ be the integers obtained in Lemma \ref{exactseq} and define recursively $\widetilde{R}_1,\dotsc,\widetilde{R}_N$ by
\begin{equation*}
\widetilde{R}_N=R_N ~,~~ \widetilde{R}_{N-1}=R_{N-1} \qquad \textrm{and} \qquad \widetilde{R}_k = R_k + a_k a_{k+1} \widetilde{R}_{k+2}.
\end{equation*}
For any $(\epsilon_1,\dotsc,\epsilon_N)$ in $\left\lbrace -1,1\right\rbrace ^N$, we define the subset $\cQ_{(\underline{\epsilon})}$ of $\cQ$ delimited by
\begin{equation}\label{Qepsilon}
\begin{array}{ll}
k_{j-1} + (-1)^{N-j} a_j \dotsm a_N k_N \leq \widetilde{R}_j & \textrm{if } \epsilon_j=1 \textrm{ and} \\
k_{j-1} + (-1)^{N-j} a_j \dotsm a_N k_N > \widetilde{R}_j & \textrm{if } \epsilon_j=-1, \\
\end{array}
\end{equation}
for every $j$ and with the cyclic convention $k_0=k_N$.
These subsets form the pieces of our puzzle
\begin{equation*}
\cQ = \bigsqcup_{(\underline{\epsilon}) \in \left\lbrace -1,1 \right\rbrace^N } \cQ_{(\underline{\epsilon})}.
\end{equation*}

\begin{dfn}\label{stable}
Let $f \colon T \rightarrow T$ be a linear endomorphism and $Z \subset \ZZ^{N+1}$.
We say that $f$ ends in $Z$ if the image of every element with degree in $Z$ has degree in $Z$ or vanishes.
On the contrary, we say that $f$ starts from $Z$ if every pre-image of every element with degree in $Z$ has degree in $Z$.
When a morphism $f$ starts from $Z$ and ends in $Z$, we say that $Z$ is stable under $f$.
\end{dfn}

For instance, for any $j \neq N$, the morphism $\alpha_j$ starts from any $\cQ_{(\epsilon_1,\dotsc,\epsilon_N)}$ with $\epsilon_{j+1}=1$ and ends in any $\cQ_{(\epsilon_1,\dotsc,\epsilon_N)}$ with $\epsilon_{j+1}=-1$.
The morphism $\alpha_N$ plays a special role, because of the asymmetry between $k_N$ and the other coordinates in \eqref{Qepsilon}.
Moreover, any subset $\cQ_{(\epsilon_1,\dotsc,\epsilon_N)}$ is stable under the morphism $\beta$.

\begin{pro}\label{puzzle}
Fix an integer $1 \leq j \leq N$ and choose $(\epsilon_1,\dotsc,\epsilon_N)$ in $\left\lbrace -1,1\right\rbrace^N$ such that
\begin{equation*}
\begin{array}{ll}
 \textrm{if } j \leq N-2, & \left\lbrace  \begin{array}{l}
 \epsilon_j=-1 \\
 \epsilon_{j+2}=1
 \end{array}\right. \\[0.4cm]
\textrm{if } j = N-1 \textrm{ or } j=N, &  \quad \epsilon_j=-1. \\
\end{array}
\end{equation*}
Then for any geometric point $s$ in $S$ and $(\underline{k},l) \in \cQ_{(\epsilon_1,\dotsc,\epsilon_N)}$, we have
\begin{equation*}
H(H((K_\ccE^{\underline{k},l})_s,\beta_s),\alpha_{j,s})=0.
\end{equation*}
\end{pro}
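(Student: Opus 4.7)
My plan is to reduce the statement, for each admissible value of $j$, to the single hypothesis of Lemma~\ref{exactseq}, namely $k_{j-1} - a_j a_{j+1} k_{j+1} > R_j$, and then invoke that lemma directly. The recursive definition $\widetilde{R}_j = R_j + a_j a_{j+1} \widetilde{R}_{j+2}$ is engineered precisely to make this reduction work by combining one strict and one non-strict inequality extracted from \eqref{Qepsilon}.

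In the generic case $1 \leq j \leq N-2$, the hypotheses $\epsilon_j=-1$ and $\epsilon_{j+2}=1$ read, respectively,
\begin{equation*}
k_{j-1} + (-1)^{N-j} a_j \dotsm a_N k_N > \widetilde{R}_j \qquad \textrm{and} \qquad k_{j+1} + (-1)^{N-j} a_{j+2} \dotsm a_N k_N \leq \widetilde{R}_{j+2},
\end{equation*}
where I use $(-1)^{N-j-2}=(-1)^{N-j}$. Subtracting $a_j a_{j+1}$ times the second inequality from the first, the $k_N$ contributions cancel and one obtains
\begin{equation*}
k_{j-1} - a_j a_{j+1} k_{j+1} > \widetilde{R}_j - a_j a_{j+1} \widetilde{R}_{j+2} = R_j.
\end{equation*}
Lemma~\ref{exactseq} then yields the vanishing of $H(H((K_\ccE^{\underline{k},l})_s,\beta_s),\alpha_{j,s})$.

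For the boundary cases, the argument is even cleaner. When $j = N-1$, we have $\widetilde{R}_{N-1}=R_{N-1}$, so $\epsilon_{N-1}=-1$ reads $k_{N-2} - a_{N-1} a_N k_N > R_{N-1}$ directly, and Lemma~\ref{exactseq} applies. When $j = N$, the vanishing condition $A_N=0$ together with $a_N \geq 2$ kills both $\cS^{a_N} A_N^\vee$ and $\cS^{a_N-1} A_N^\vee$, forcing $\alpha_N = 0$; hence $H(\cdot,\alpha_{N,s})$ is the identity functor and it suffices to show $H((K_\ccE^{\underline{k},l})_s,\beta_s) = \overline{K}_\ccE^{(\underline{k},l)}{}_s$ vanishes. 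The hypothesis $\epsilon_N=-1$ reads $k_{N-1}+a_N k_N > \widetilde{R}_N = R_N \geq \rank B_N$, and the grading identities \eqref{Psi map} then show that the weight $q_N$ of every nonzero summand of $\overline{K}_\ccE^{(\underline{k},l)}{}_s$ would have to exceed $\rank B_N$, which is impossible; so the term-by-term vanishing follows.

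The only real obstacle I anticipate is combinatorial bookkeeping: tracking the signs $(-1)^{N-j}$ carefully so that the $k_N$ terms cancel upon combining the two inequalities from \eqref{Qepsilon}, and making sure the chosen $R_N$ in Lemma~\ref{exactseq} is large enough to absorb the finitely many contributions coming from the shift set $\cE$ in the $j=N$ case. Once these two sign/bound alignments are checked, the recursive choice of $\widetilde{R}_j$ carries out all of the remaining arithmetic automatically.
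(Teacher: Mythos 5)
Your proposal is correct and follows essentially the same route as the paper: for $j \leq N-2$ you combine the two inequalities from $\epsilon_j=-1$ and $\epsilon_{j+2}=1$ to land on the single hypothesis of Lemma~\ref{exactseq}, while the boundary cases $j=N-1$ (where $\widetilde{R}_{N-1}=R_{N-1}$) and $j=N$ (where $A_N=0$ forces the relevant graded piece to vanish outright) are handled separately. The only cosmetic difference is that the paper concludes in the $j=N$ case by noting $K_\ccE^{\underline{k},l}=0$ already before taking any cohomology, whereas you phrase it via the vanishing of the $\beta_s$-cohomology, but the underlying observation is the same.
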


\begin{proof}
For $j=N$, since $\epsilon_N=-1$ implies
\begin{equation*}
p_N + q_N = k_{N-1}+a_Nk_N > R_N \geq \rank(B_N)
\end{equation*}
and since the vanishing condition in Theorem \ref{genus0theo} requires $A_N=0$, then we have directly $K_\ccE^{\underline{k},l}=0$.
For $j=N-1$, we have $\epsilon_{N-1}=-1$, i.e.~
\begin{equation*}
k_{N-2} - a_{N-1} a_N k_N > R_{N-1},
\end{equation*}
and we use Lemma \ref{exactseq}.

Fix an index $j \leq N-2$. The conditions $\epsilon_j=-1$ and $\epsilon_{j+2}=1$ mean
\begin{equation*}
k_{j-1} + (-1)^{N-j} a_j \dotsm a_N k_N > \widetilde{R}_j  \qquad \textrm{and} \qquad k_{j+1} + (-1)^{N-j} a_{j+2} \dotsm a_N k_N \leq \widetilde{R}_{j+2}.
\end{equation*}
Multiplying the second inequality by $-a_j a_{j+1}$ and adding the first inequality yields
\begin{equation*}
k_{j-1} - a_j a_{j+1} k_{j+1} > \widetilde{R}_j - a_j a_{j+1} \widetilde{R}_{j+2} = R_j
\end{equation*}
and we conclude again with Lemma \ref{exactseq}.
\end{proof}

Proposition \ref{puzzle} guides us to the definition of the subsets $\omega_1,\dotsc,\omega_N$ of $\left\lbrace -1,1\right\rbrace ^N$:
\begin{equation*}
\omega_1 = \left\lbrace (\epsilon_1,\dotsc,\epsilon_N) \in \left\lbrace -1,1\right\rbrace^N \left| \right. \epsilon_1=-1 \textrm{ and }\epsilon_3=1 \right\rbrace
\end{equation*}
and for $j \geq 2$, the set $\omega_j$ is the maximal subset of $\left\lbrace -1,1\right\rbrace^N \setminus (\omega_1 \cup \dotsb \cup \omega_{j-1})$ with the properties
\begin{enumerate}
\item $\epsilon_j=-1$,
\item $\epsilon_{j+2}=1$ (not required for $\omega_{N-1}$ and $\omega_N$),
\item $(\epsilon_1,\dotsc,\epsilon_{j+1},\dotsc,\epsilon_N) \in \omega_j \implies (\epsilon_1,\dotsc,-\epsilon_{j+1},\dotsc,\epsilon_N) \in \omega_j$ (not required for $\omega_N$).
\end{enumerate}
The first and the second points are motivated by Proposition \ref{puzzle} and the third point by the stability condition for the morphism $\alpha_j$ (see Lemma \ref{goesout}).
We also define the set $\omega_\Omega$ such that we have
\begin{equation}\label{decoup}
\left\lbrace -1,1\right\rbrace^N = \omega_\Omega \sqcup \omega_1 \sqcup \dotsb \sqcup \omega_N.
\end{equation}
Observe that if $(\epsilon_1,\dotsc,\epsilon_N)$ is in $\omega_k \subset \left\lbrace -1,1\right\rbrace^N$, then in $\left\lbrace -1,1\right\rbrace^{N+1}$ we get
\begin{itemize}
\item $(\epsilon_1,\dotsc,\epsilon_N,1)$ is in $\omega_k$
\item $(\epsilon_1,\dotsc,\epsilon_N,-1)$ is in $\omega_k$ when $k \neq N-1$ and $k \neq \Omega$
\item $(\epsilon_1,\dotsc,\epsilon_N,-1)$ is in $\omega_{N+1}$ when $k=N-1$ or $k=\Omega$
\item $(1,\dotsc,1,-1)$ is in $\omega_{N}$.
\end{itemize}
We illustrate the behavior of the sets $\omega_j$ under the insertion of $\pm 1$ as
\begin{center}
\begin{tikzpicture}[scale=0.5]
\draw (0,0)--(0,1);
\draw[dashed] (0,1)--(0,3);
\draw (0,3)--(0,5);
\draw (3,0)--(3,1);
\draw[dashed] (3,1)--(3,3);
\draw (3,3)--(3,6);

\draw (-0.2,0)--(0.2,0);
\draw (2.8,0)--(3.2,0);
\draw (-0.2,1)--(0.2,1);
\draw (2.8,1)--(3.2,1);
\draw (-0.2,3)--(0.2,3);
\draw (2.8,3)--(3.2,3);
\draw (-0.2,4)--(0.2,4);
\draw (2.8,4)--(3.2,4);
\draw (-0.2,5)--(0.2,5);
\draw (2.8,5)--(3.2,5);
\draw (2.8,6)--(3.2,6);

\draw (0,0) node[left] {$\omega_\Omega$};
\draw (3,0) node[right] {$\omega_\Omega$};
\draw (0,1) node[left] {$\omega_1$};
\draw (3,1) node[right] {$\omega_1$};
\draw (0,3) node[left] {$\omega_{N-2}$};
\draw (3,3) node[right] {$\omega_{N-2}$};
\draw (0,4) node[left] {$\omega_{N-1}$};
\draw (3,4) node[right] {$\omega_{N-1}$};
\draw (0,5) node[left] {$\omega_N$};
\draw (3,5) node[right] {$\omega_N$};
\draw (3,6) node[right] {$\omega_{N+1}$};

\draw (0,-0.5) node[below] {$N$};
\draw (3,-0.5) node[below] {$N+1$};

\draw[->,>=stealth] (0.5,0)--(2.5,0);
\draw[->,>=stealth] (0.5,1)--(2.5,1);
\draw[->,>=stealth] (0.5,3)--(2.5,3);
\draw[->,>=stealth] (0.5,4)--(2.5,4);
\draw[->,>=stealth] (0.5,5)--(2.5,5);
\draw[->,>=stealth] (0.5,0)--(2.5,6);
\draw[->,>=stealth] (0.5,4)--(2.5,6);

\draw (1.2,-0.2) node[above] {$\scriptscriptstyle 1$};
\draw (1.8,0.8) node[above] {$\scriptscriptstyle 1,-1$};
\draw (0.8,2.8) node[above] {$\scriptscriptstyle 1,-1$};
\draw (1.2,3.8) node[above] {$\scriptscriptstyle 1$};
\draw (0.8,4.8) node[above] {$\scriptscriptstyle 1,-1$};
\draw (1.4,2.1) node[left] {$\scriptscriptstyle -1$};
\draw (1.2,4.5) node[left] {$\scriptscriptstyle -1$};
\end{tikzpicture}
\end{center}

\noindent
and it becomes easy to prove
\begin{equation}\label{omegaO}
\omega_\Omega = \left\lbrace (1,\dotsc,1) \right\rbrace.
\end{equation}

\noindent
At last, we define the subsets

\begin{equation*}
\cQ_j := \bigsqcup\limits_{(\underline{\epsilon}) \in \omega_j} \cQ_{(\underline{\epsilon})} \subset \ZZ^{N+1} \qquad \textrm{with } 1 \leq j \leq N,
\end{equation*}
and $\cQ_\Omega$ stands for the finite subset $\cQ_{(1,\dotsc,1)}$.

\begin{lem}\label{goesout}
For $j \leq N-1$, the subset $\cQ_j$ is stable under the morphisms $\alpha_j$, $\beta$ and $\alpha_k$ with $j+1 < k < N$.
For $j \leq N-2$, the morphism $\alpha_{j+1}$ starts from $\cQ_j$.
\end{lem}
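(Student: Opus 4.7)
The plan is to analyze how each morphism acts on the $\mathbb{Z}^{N+1}$-grading of \eqref{Kdef} and on the defining inequalities \eqref{Qepsilon}, and then reduce the lemma to a combinatorial closure property of $\omega_j$. Since $\beta$ has multi-degree $\dw_{N+1}$ and $\alpha_k$ has multi-degree $\dw_k$, the morphism $\beta$ shifts only the coordinate $l$, which does not appear in any of the inequalities, while $\alpha_k$ shifts $k_k \mapsto k_k+1$. The $m$-th inequality of \eqref{Qepsilon} involves only $k_{m-1}$ and $k_N$, so for $k<N$ the morphism $\alpha_k$ perturbs exactly the $(k+1)$-st inequality, raising its left-hand side by $1$; consequently $\alpha_k$ can only flip $\epsilon_{k+1}$ from $+1$ to $-1$. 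The morphism $\alpha_N$ would perturb every inequality simultaneously, which explains its exclusion from the stability list.

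With this dictionary, stability of $\cQ_j$ under the listed morphisms amounts to closure of $\omega_j$ under flipping $\epsilon_{j+1}$ from $+1$ to $-1$ (to accommodate $\alpha_j$) and under flipping $\epsilon_m$ from $+1$ to $-1$ for $m\in\{j+3,\dotsc,N\}$ (to accommodate $\alpha_k$ with $j+1<k<N$, since such $k$ satisfies $k+1=m\ge j+3$). Closure in the coordinate $\epsilon_{j+1}$ is precisely property (3) in the definition of $\omega_j$. The ``starts from $\cQ_j$'' assertion for $\alpha_{j+1}$ is an immediate consequence of property (2): a pre-image under $\alpha_{j+1}$ has $k_{j+1}$ decreased by $1$, which lowers the LHS of the $(j+2)$-nd inequality and leaves every other inequality untouched; since $\omega_j$ forces $\epsilon_{j+2}=+1$ (and the LHS was already $\le\widetilde R_{j+2}$), the pre-image keeps $\epsilon_{j+2}=+1$ and agrees with the image in every other coordinate.

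The main step is therefore to prove by induction on $j$, for $1 \le j \le N-2$, that $\omega_j$ is \emph{free in every coordinate $\epsilon_m$ with $m \ge j+3$}, meaning that membership of $\underline\epsilon$ in $\omega_j$ does not depend on such coordinates. The inductive step inspects which coordinates appear in the definition of $\omega_j$: properties (1), (2), and (3) only involve $\epsilon_j$, $\epsilon_{j+1}$, $\epsilon_{j+2}$, and the disjointness condition $\underline\epsilon \notin \omega_1 \cup \dotsb \cup \omega_{j-1}$ involves, by the inductive hypothesis applied to each $\omega_l$ with $l \le j-1$, only coordinates $\epsilon_m$ with $m \le (j-1)+2 = j+1$. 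Hence no defining constraint of $\omega_j$ touches $\epsilon_m$ for $m \ge j+3$, and flipping such a coordinate preserves membership in $\omega_j$. This yields every closure needed in the stability list for $j \le N-2$. The boundary case $j = N-1$ requires no such induction: the stability list reduces to $\alpha_{N-1}$ and $\beta$, and closure under $\alpha_{N-1}$ (which flips $\epsilon_N$) is exactly property (3) in the definition of $\omega_{N-1}$.

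The principal obstacle is extracting this freeness in the high coordinates from the recursive, maximality-based definition of $\omega_j$, which entangles property (2) with disjointness from every earlier $\omega_l$; the inductive coordinate-tracking is what disentangles them. Once this combinatorial observation is in place, each assertion of the lemma follows by a direct inspection and uses no further input from the structure of the recursive complex $K_{\ccE}$ itself.
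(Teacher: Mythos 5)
Your proposal is correct and follows essentially the same strategy as the paper's own proof: translate each morphism's $\ZZ^{N+1}$-degree into a sign-flip on a single inequality of \eqref{Qepsilon}, and then invoke properties (2), (3) of the $\omega_j$'s together with the freeness of $\omega_j$ in the coordinates $\epsilon_m$, $m\ge j+3$. The only difference is one of rigor: where the paper reads this freeness off the inductive figure showing how $\omega_j$ behaves under appending $\pm 1$, you prove it cleanly by induction on $j$, tracking that membership in $\omega_j$ depends only on $\epsilon_1,\dotsc,\epsilon_{j+2}$ because the disjointness condition involves only $\omega_l$ with $l\le j-1$, each depending only on coordinates up to $l+2\le j+1$.
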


\begin{proof}
The stability condition is clear for the morphisms $\alpha_j$ and $\beta$.
Let us prove it for $\alpha_k$ with $j+1 < k <N$.
Fix $(\epsilon_1,\dotsc,\epsilon_N)$ in $\omega_j$.
By the figure above, we see that $(\epsilon_1,\dotsc,\epsilon_j)$ is also in $\omega_j$ and
\begin{equation*}
(\epsilon_1,\dotsc,\epsilon_j, \pm 1, 1 , \pm 1 , \dotsc, \pm 1) \in \omega_j.
\end{equation*}
Since $k \neq N$, the morphism $\alpha_k$ can only change the sign of $\epsilon_{k+1}$ with $k+1 \geq j+3$, hence the stability condition holds for $\alpha_k$.

Since the morphism $\alpha_{j+1}$ can only change the sign of $\epsilon_{j+2}$ and since $\epsilon_{j+2}=1$ for the set $\cQ_j$,
the morphism $\alpha_{j+1}$ starts from $\cQ_j$.
\end{proof}

Recall that when we fix all the coordinates of $\ZZ^{N+1}$ but $k_j$ and $l$, we get a double complex $(K_\ccE^{\underline{k},l}; \alpha_j, \beta)$.
By Lemma \ref{goesout}, the subset $\cQ_j$ is stable with respect to these morphisms.
Thus those double complexes can be of two kinds:
\begin{enumerate}
\item all the multi-degrees of the sheaves are in $\cQ_j$,
\item all the multi-degrees of the sheaves are in the complement of $\cQ_j$.
\end{enumerate}
In the first case, Proposition \ref{puzzle} implies that the total cohomology over any geometric point vanishes.
As the sheaves are coherent and locally free, the total cohomology over the base scheme $S$ vanishes also, which we write
\begin{equation}\label{vanishingforQj}
H(K_\ccE^{\underline{k},l},\alpha_j + \beta) =0 \quad \textrm{on } \cQ_j.
\end{equation}

\noindent
\textit{Step $4$: spectral sequences.}
For each $j$, we define the sheaves
\begin{equation}\label{inductcomplex}
 (K_\ccE^{\underline{k},l})_j =  \left\lbrace \begin{array}{ll}
0 & \textrm{if } (\underline{k},l) \in \cQ_1 \sqcup \dotsb \sqcup \cQ_j, \\
K_\ccE^{\underline{k},l} & \textrm{otherwise,}
\end{array}\right. 
\end{equation}
yielding the total complexes $K_1,\dotsc,K_N$; the notation $K_0$ stands for $K$.
By Lemma \ref{finitesum}, they are complexes of coherent sheaves.

\begin{lem}\label{spectral2}
Let $(C;f,g)$ be a double complex and $\cE$ be a subset of $\ZZ^2$.
We define a double complex $(C_0;f_0,g_0)$ by
\begin{equation*}
C_0^{r,s} =  \left\lbrace \begin{array}{ll}
0 & \textrm{if } (r,s) \in \cE, \\
C^{r,s} & \textrm{otherwise,}
\end{array}\right. 
\end{equation*}
with the induced morphisms.

If the morphisms $f$ and $g$ start from the subset $\cE$, then the natural injection of sheaves $C_0^{r,s} \hookrightarrow C^{r,s}$ induces an injective morphism $(C_0;f_0,g_0) \hookrightarrow (C;f,g)$ of double complexes.

If the morphisms $f$ and $g$ end in the subset $\cE$, then the natural surjection of sheaves $C^{r,s} \twoheadrightarrow C_0^{r,s}$ induces a surjective morphism $(C;f,g) \twoheadrightarrow (C_0;f_0,g_0)$ of double complexes.

Moreover, in both cases, if, for any $n \in \ZZ$, there is an integer $r_0$ such that
\begin{equation}\label{complete}
C^{n-r,r}=0 \quad \textrm{for any } r \geq r_0
\end{equation}
and if we have
\begin{equation}\label{spectral}
H^r(C^{\bullet,s},f) = H^r(C_0^{\bullet,s},f) \qquad \textrm{for any } (r,s) \in \ZZ^2,
\end{equation}
then the total complexes are quasi-isomorphic.
\end{lem}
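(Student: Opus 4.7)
The plan is to treat the two parts of the lemma separately: first the existence of the induced morphism of double complexes (a direct verification from the definitions of \emph{starts from} and \emph{ends in}), then the quasi-isomorphism claim (via a spectral-sequence comparison).

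For the morphism, suppose $f$ and $g$ start from $\cE$. Pick an element $x$ of degree $(r,s) \notin \cE$ and suppose its image under $f$ has a nonzero component in $C^{r',s'}$. If $(r',s')$ belonged to $\cE$, then the starts-from hypothesis would force $(r,s) \in \cE$, a contradiction; hence every nonzero component of $f$ on $C_0$ lands back in $C_0$, and the same argument applies to $g$. Therefore $C_0 \hookrightarrow C$ is a sub-double-complex. The case where $f$ and $g$ end in $\cE$ is dual: the direct summand $\bigoplus_{(r,s) \in \cE} C^{r,s}$ is preserved by both differentials, so it is a sub-double-complex of $C$, and the projection $C \twoheadrightarrow C_0$ is the required surjective morphism.

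For the quasi-isomorphism, I would filter the total complex by rows,
\[
F^p \mathrm{Tot}(C)^n := \bigoplus_{s \geq p} C^{n-s, s},
\]
so that the associated spectral sequence has $E_1^{p, s} = H^p(C^{\bullet, s}, f)$, with differential $d_1$ induced by $g$; the same filtration on $\mathrm{Tot}(C_0)$ yields a second spectral sequence. The morphism of double complexes supplied by the first part induces a morphism of the two spectral sequences, and hypothesis (\ref{spectral}) is exactly the statement that this morphism is an isomorphism on $E_1$. By the classical comparison theorem, the isomorphism then propagates to every later page, and in particular to $E_\infty$.

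The genuinely subtle point, and the main obstacle, is convergence. Hypothesis (\ref{complete}) bounds each diagonal $\{(r, n-r)\}$ on one side, cutting off the nonzero terms for $r \geq r_0$. In the ambient setting in which this lemma will be applied, the double complex sits in a bounded region in the $s$-direction as well (the columns $C^{\bullet,s}$ vanish for $s$ outside an interval determined by the ranks of the bundles appearing in the $\ZZ^{N+1}$-graded pieces $(K_\ccE^{\underline{k},l})^{\bullet,\bullet}$), so each $\mathrm{Tot}(C)^n$ has only finitely many nonzero contributions. The filtration $F^\bullet$ is therefore bounded in each total degree, strong convergence of both spectral sequences to the cohomology of the respective total complexes is automatic, and the $E_\infty$ isomorphism lifts to an isomorphism $H^\bullet(\mathrm{Tot}(C_0)) \xrightarrow{\sim} H^\bullet(\mathrm{Tot}(C))$, which is the desired quasi-isomorphism.
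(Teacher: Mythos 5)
Your argument is correct and uses the same spectral-sequence comparison as the paper: a direct check of the morphism of double complexes from the definitions of \emph{starts from} and \emph{ends in}, followed by identification of the $E_1$ pages via \eqref{spectral} and convergence via \eqref{complete}. One small but useful observation: your filtration $F^p = \bigoplus_{s \geq p} C^{n-s,s}$ is the one whose $E_0$-differential is $f$ (it fixes the filtration index $s$), so $E_1$ is precisely the horizontal $f$-cohomology that \eqref{spectral} controls; the paper prints $F^p = \bigoplus_{r \geq p} C^{r,n-r}$, which would instead yield $E_1^{p,q} = H^q(C^{p,\bullet},g)$, so the displayed filtration appears to carry an $r \leftrightarrow s$ slip, and yours is the one the argument actually requires. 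You are also right to single out convergence as the delicate point: as stated, \eqref{complete} bounds each diagonal $r+s=n$ on only one side, whereas the filtration by $s$ needs to stabilize on the other side. In the application, Lemma~\ref{finitesum} provides degree-wise finiteness of the total complex, so each total degree has finite filtration and the classical convergence theorem applies directly; the paper's proof invokes \eqref{complete} for abutment without spelling this out, and your version makes the auxiliary finiteness that is really doing the work explicit.
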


\begin{rem}
If the subset $\cE$ is stable by the morphism $f$ and if
\begin{equation*}
H^r(C^{\bullet,s},f) = 0 \qquad \textrm{for any } (r,s) \in \cE,
\end{equation*}
then the equation \eqref{spectral} follows.
\end{rem}

\begin{proof}
It is easy to check the injection and the surjection between the double complexes.
To prove the quasi-isomorphism, we treat the case of the injection; the other case is similar.
Consider the spectral sequences given by the filtration induced by the rows
\begin{equation*}
F^pC^n = \oplus_{r \geq p} C^{n-r,r} \quad \textrm{and} \quad F^pC_0^n = \oplus_{r \geq p} C_0^{n-r,r}.
\end{equation*}
The injection induces a morphism from the spectral sequence $E_\bullet (C_0;f_0,g_0)$ to the spectral sequence $E_\bullet(C;f,g)$, and by \eqref{spectral}, we have
$E_1(C_0;f_0,g_0)=E_1(C;f,g)$.
Then the two spectral sequences coincide,
\begin{equation*}
E_r(C_0;f_0,g_0)=E_r(C;f,g) ~,~~ \textrm{for any } r \geq 1.
\end{equation*}
By \eqref{complete}, the spectral sequences
abut to the total cohomology of the complexes,
\begin{equation*}
H(C_0^\textrm{tot},f_0+g_0) = H(C^\textrm{tot},f+g).
\end{equation*}
\end{proof}

For any $1 \leq j \leq N-1$, Lemma \ref{goesout} implies
\begin{equation*}
(K_j,\alpha_j+\beta,\alpha_{j+1}+\dotsb+\alpha_{N-1}) \hookrightarrow (K_{j-1},\alpha_j+\beta,\alpha_{j+1}+\dotsb+\alpha_{N-1}),
\end{equation*}
with the convention $K_0:=K$.
For any $(\underline{k},l)$ in $\cQ_j$, we have
\begin{equation*}
\begin{array}{lcll}
H((K_\ccE^{\underline{k},l})_{j-1},\alpha_j+\beta) & = & 0 & \textrm{by \eqref{vanishingforQj},} \\
H((K_\ccE^{\underline{k},l})_j,\alpha_j+\beta) & = & 0 & \textrm{by construction.} \\
\end{array}
\end{equation*}
By Lemma \ref{finitesum}, the condition \eqref{complete} is satisfied and we apply Lemma \ref{spectral2} to get
\begin{equation}\label{egalcompl}
H(K_j,\alpha_j+\dotsb+\alpha_{N-1}+\beta) = H(K_{j-1},\alpha_j+\dotsb+\alpha_{N-1}+\beta)~, ~~ \textrm{for } 1 \leq j \leq N-1.
\end{equation}
By the vanishing condition of Theorem \ref{genus0theo}, the vector bundle $A_N$ is zero, thus
\begin{equation*}
\begin{array}{lcl}
\bigoplus_{(\underline{k},l,\ccE) \in \cQ_N \times \cE} K_\ccE^{\underline{k},l} & = & 0, \\
\alpha_N & = & 0, \\
K_{N-1} & = & K_N.
\end{array}
\end{equation*}
Since the morphisms $\alpha_1, \dotsc, \alpha_{N-1}$ and $\beta$ start from the subset $\cQ_\Omega$, there is a surjection
\begin{equation*}
(K_j,\beta+\alpha_{j+1}+\dotsb+\alpha_N,\alpha_j) \twoheadrightarrow (K_N,\beta+\alpha_{j+1}+\dotsb+\alpha_N,\alpha_j),
\end{equation*}
for any $j \leq N-1$.
Consequently, following Lemma \ref{finitesum}, we need
\begin{equation}\label{hyp}
H(K_j,\alpha_{j+1}+\dotsb+\alpha_N+\beta) = H(K_N,\alpha_{j+1}+\dotsb+\alpha_N+\beta)
\end{equation}
to show
\begin{equation}\label{res}
H(K_j,\alpha_j+\dotsb+\alpha_N+\beta) = H(K_N,\alpha_j+\dotsb+\alpha_N+\beta).
\end{equation}
The equality \eqref{egalcompl} for $j=N-1$ coincides with \eqref{hyp} for $j=N-2$ (because $K_N=K_{N-1}$), so that we get \eqref{res} for $j=N-2$.
The following implications are
\begin{displaymath}
\left. \begin{array}{l}
\eqref{res} \textrm{ for } j \\
\eqref{egalcompl} \textrm{ for } j \\
\end{array} \right\}
= (\eqref{hyp} \textrm{ for } j-1) \implies (\eqref{res} \textrm{ for } j-1).
\end{displaymath}
By a decreasing induction on the index $j$, we end with \eqref{hyp} for $j=0$,
\begin{equation*}
H(K,\alpha_1+\dotsb+\alpha_N+\beta) = H(K_N,\alpha_1+\dotsb+\alpha_N+\beta).
\end{equation*}
By construction, $(K_N,\delta)$ is a bounded complex, as it vanishes outside of $\cQ_{(1,\dotsc,1)}$.
Its evaluation in K-theory gives
\begin{equation*}
\begin{split}
\Bigl[ \left( K^{\textrm{tot}} \right)^\bullet,\delta \Bigr] = \left[ \left( K_N^{\textrm{tot}} \right)^\bullet,\delta \right]& = \sum\limits_{(\underline{k},l) \in \cQ_{(1,\dotsc,1)}} (-1)^{k_1+\dotsb+k_N+l} \left[ K^{\underline{k},l} \right]\\
& =  \sum\limits_{(\underline{k},l) \in \cQ_{(1,\dotsc,1)}} (-1)^{k_1+\dotsb+k_N-l} \left[ K^{\underline{k},l} \right] \\
& = \sum\limits_{(\underline{p},\underline{q}) \in \cP(R_1,\dotsc,R_N)} (-1)^{q_1+\dotsb+q_N} \left[ T^{\underline{p},\underline{q}} \right], \\
\end{split}
\end{equation*}
by the change of variables
\begin{equation*}
\Psi(\cP(R_1,\dotsc,R_N))=\cQ_{(1,\dotsc,1)} \times \cE \ \ \textrm{and} \ \ \Bigl( \Psi^{-1}(\cQ_{(1,\dotsc,1)}) \times \cE \Bigr) \cap \cP^+=\cP(R_1,\dotsc,R_N).
\end{equation*}
This ends the proof of Theorem \ref{genus0theo}. \qed

\subsection{Explicit formula for the virtual class}\label{leadgiven}
Let $(T,\delta)$ be a non-degenerate recursive complex with $A_N=0$.
We want to derive from Theorem \ref{genus0theo} a formula suitable to Givental's theory (see Sect.~\ref{computations}).
First, we take the formal power series
\begin{eqnarray}
\mathbf{F}^\mathrm{form}_j(x) & := & \sum_{z \in \NN} \Ch^\vee(\cS^{z}\AB_j) \cdot x^z  \frac{1}{\Td(\AB_j)} \nonumber \\
\mathbf{F}^\mathrm{form}(x_1,\dotsc,x_N) & := & \prod_{j=1}^N \mathbf{F}^\mathrm{form}_j(x_j), \label{formalcvir}
\end{eqnarray}
and by Corollary \ref{maincoro}, we observe that we get the virtual class by the following rules:
\begin{enumerate}
\item choose sufficiently large integers $R_1 \gg \dotsb \gg R_N \gg 1$ (we could estimate lower bounds for these values),
\item develop the formal power series \eqref{formalcvir} with respect to the variables $x_1, \dotsc, x_N$,
\item eliminate every monomial $x_1^{h_1} \dotsm x_N^{h_N}$ with
\begin{equation*}
h_j+\sum_{k=j+1}^N (-a_j) \dotsm (-a_{k-1}) h_k > R_j \quad \textrm{for some index } j,
\end{equation*}
\item evaluate the remaining polynomial at $(x_1,\dotsc,x_N)=(1,\dotsc,1)$.
\end{enumerate}

Fix once for all sufficiently large integers $R_1 \gg \dotsb \gg R_N \gg 1$.
We set the polynomial functions with coefficients in $H^*(S)$
\begin{equation}\label{dfnGj}
\mathbf{G}^{(R_j,\dotsc,R_N)}_j(x_j,\dotsc,x_N):=\sum  \prod_{k=j}^N \Ch^\vee(\cS^{z_k}\AB_k) \cdot x_k^{z_k} \frac{1}{\Td(\AB_k)},
\end{equation}
where the sum is taken over all indexes $(z_j,\dotsc,z_N) \in \NN^{N-j+1}$ such that
\begin{equation*}
z_l+\sum_{k=l+1}^N (-a_l) \dotsm (-a_{k-1}) z_k \leq R_l, \qquad \textrm{for each $j \leq l \leq N$.}
\end{equation*}
We also define the rational functions with coefficients in $H^*(S)$
\begin{equation}\label{cvirx}
\mathbf{F}_j(x) = \exp \Bigl(  \sum\limits_{l \geq 0} s_l(x) \Ch_l(\AB_j) \Bigr) 
\end{equation}
with the functions
\begin{equation}\label{parametresl}
s_l(x) = \left\lbrace 
\begin{split}
&- \ln (1-x)  & \qquad \textrm{if $l=0$,} \\
&\cfrac{B_l(0)}{l} + (-1)^l \sum\limits_{k=1}^l (k-1)! \left( \frac{x}{1-x} \right)^k \gamma(l,k) & \qquad \textrm{if } l \geq 1. \\
\end{split}\right. 
\end{equation}

\noindent
Here, the number $\gamma(l,k)$ is defined by the generating function
\begin{equation*}
\sum_{l \geq 0} \gamma(l,k) \frac{z^l}{l!} := \frac{(e^z-1)^k}{k!}.
\end{equation*}
We notice that $\gamma(l,k)$ vanishes for $k>l$ and that the sum over $l$ in \eqref{cvirx} is finite because $\Ch_l$ vanishes for $l > \dim(S)$.
Finally, we set $\mathbf{F}(x_1,\dotsc,x_N) := \prod_{j=1}^N \mathbf{F}_j(x_j)$.

\begin{lem}\label{formaldev}
For each $j$, the formal series $\mathbf{F}_j^\mathrm{form}$ is a development of the rational function $\mathbf{F}_j$ in the open unit ball $B_{\CC}(0,1)$.
\end{lem}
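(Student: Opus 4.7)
The plan is to pass to Chern roots via the splitting principle and reduce the lemma to a scalar identity in one formal variable $y$, which is then verified by a Bernoulli expansion combined with a Stirling-number computation. By the splitting principle, assume $A_j$ and $B_j$ decompose as sums of line bundles with respective Chern roots $\alpha_1,\dotsc,\alpha_a$ and $\beta_1,\dotsc,\beta_b$. Since Corollary \ref{maincoro} identifies $\AB_j$ with $A_j - B_j$ in $K^0(S)$, the $\lambda$-ring identity $\cS_x(A_j - B_j) = \lambda_{-x}(B_j)/\lambda_{-x}(A_j)$ gives
\[
\sum_{z \geq 0} \Ch^\vee(\cS^z \AB_j)\, x^z \;=\; \frac{\prod_{k} (1-x e^{-\beta_k})}{\prod_{i} (1-x e^{-\alpha_i})},
\]
and multiplying by $\Td(\AB_j)^{-1} = \prod_i (1-e^{-\alpha_i})/\alpha_i \cdot \prod_k \beta_k/(1-e^{-\beta_k})$ yields the explicit product expression
\[
\mathbf{F}^\mathrm{form}_j(x) \;=\; \prod_{i}\frac{1-e^{-\alpha_i}}{\alpha_i(1-x e^{-\alpha_i})} \cdot \prod_{k}\frac{\beta_k(1-x e^{-\beta_k})}{1-e^{-\beta_k}}.
\]

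Taking logarithms, the equality of $\log \mathbf{F}^\mathrm{form}_j(x)$ with $\log \mathbf{F}_j(x) = \sum_l s_l(x) \Ch_l(\AB_j)$ is additive over Chern roots, so it suffices to establish the single-variable formal identity
\[
\log\!\left( \frac{1-e^{-y}}{y(1-x e^{-y})} \right) \;=\; \sum_{l \geq 0} s_l(x) \, \frac{y^l}{l!}.
\]
Splitting the left-hand side into $\log((1-e^{-y})/y) - \log(1 - x e^{-y})$, the first summand is treated via the Bernoulli expansion: differentiating gives $1/(e^y-1) - 1/y$, and substituting $y/(e^y-1) = \sum_l B_l(0) y^l/l!$ followed by integration produces $\sum_{l \geq 1} (B_l(0)/l) \cdot y^l/l!$. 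The second summand expands as $\sum_{k \geq 1} x^k e^{-ky}/k$, whose coefficient of $y^l/l!$ is $(-1)^l \sum_{k \geq 1} k^{l-1} x^k$; in particular the $l=0$ term contributes $-\log(1-x) = s_0(x)$, matching the constant piece of $s_l$.

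The main obstacle is to identify, for $l \geq 1$, the polylogarithmic sum with the Stirling expression prescribed in \eqref{parametresl}:
\[
\sum_{k \geq 1} k^{l-1} x^k \;=\; \sum_{k=1}^{l} (k-1)!\, \gamma(l,k)\, \left( \frac{x}{1-x} \right)^k.
\]
The generating series $\sum_l \gamma(l,k) z^l/l! = (e^z-1)^k/k!$ identifies $\gamma(l,k)$ with the Stirling number $S(l,k)$ of the second kind; the identity then follows from the classical conversion $k^{l-1} = \sum_j S(l-1,j)\, k^{\underline{j}}$, combined with $\sum_{k \geq j} k^{\underline{j}} x^k = j!\, x^j/(1-x)^{j+1}$, and reshaped via the recursion $S(l,k) = k\, S(l-1,k) + S(l-1,k-1)$ into the required form. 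Convergence on $|x|<1$ is then automatic: the product expression above is rational in $x$ with poles only at $x=1$ (each factor $(1 - x e^{-\alpha_i})^{-1} = ((1-x) + x(1-e^{-\alpha_i}))^{-1}$ is rational with pole at $x=1$, by nilpotency of $\alpha_i$), so $\mathbf{F}^\mathrm{form}_j$ and $\mathbf{F}_j$ are Taylor developments of the same rational function on the open unit disc.
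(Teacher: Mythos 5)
Your proof is correct and takes essentially the same route as the paper: pass to Chern roots, reduce to a scalar power-series identity in $y$, and split it into a Bernoulli piece matching $\Td$ and a $-\log(1-xe^{-y})$ piece matching $\lambda_{-x}$, with $\gamma(l,k)$ being the Stirling numbers of the second kind. The only difference is cosmetic: the paper states the two required identities in exponential form and says ``apply to $\AB_j$,'' whereas you take logarithms first and spell out the Stirling-number manipulation (via $k^{l-1}=\sum_j S(l-1,j)\,k^{\underline{j}}$ and the recursion $S(l,k)=kS(l-1,k)+S(l-1,k-1)$) that the paper leaves implicit.
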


\begin{proof}
For a vector bundle $E$ of rank $s$ with roots $r_1,\dotsc,r_s$ and for $\left| t \right| <1$, we have
\begin{displaymath}
\begin{split}
\sum_{m\geq 0}(-1)^m\Ch(\Lambda^m(E)) \cdot t^m & =  \prod_{k=1}^s(1-t \cdot e^{r_k}) \\
\sum_{m\geq 0}\Ch(\cS^m(E)) \cdot t^m & =  \frac{1}{\prod_{k=1}^s(1-t \cdot e^{r_k})} \\
\end{split}
\end{displaymath}
and in the ring $\CC \left[ x,y \right] /(y^R)$, we have
\begin{displaymath}
\begin{split}
1-x \cdot e^y  & =  (1-x) \exp \biggl(  - \sum_{l \geq 1} \sum_{k \geq 1} (k-1)! \left(\frac{x}{1-x}\right)^k \gamma(l,k) \frac{y^l}{l!}  \biggr) \\
\frac{y}{1-e^{-y}} & =   \exp \biggl( - \sum_{l \geq 1} \frac{B_l(0)}{l} \frac{y^l}{l!} \biggr).\\
\end{split}
\end{displaymath}
We apply these formulas to $\AB_j$ to conclude.
\end{proof}

For any $\lambda$ in $\CC-\left\lbrace 0,1 \right\rbrace$, we consider the rational function with values in $H^*(S)$
\begin{equation*}
\mathbf{F}(\lambda) := \mathbf{F}(x_1,\dotsc,x_N)\left| \right._{(x_1,\dotsc,x_N)=\bigl(\lambda,\lambda^{-a_1}, \dotsc, \lambda^{(-a_1) \dotsm (-a_{N-1})}\bigr)}.
\end{equation*}

\begin{thm}\label{formula}
Under the hypothesis of Theorem \ref{genus0theo}, the rational function $\mathbf{F}(\lambda)$ is a polynomial in $\lambda,\lambda^{-1}$ and we have
\begin{equation*}
\cvir(T,\delta) = \mathbf{F}(\lambda=1).
\end{equation*}
In particular, we get
\begin{equation}\label{formulelim}
\cvir(T,\delta) = \lim_{\lambda \to 1} \Biggl(\prod_{j=1}^N (1-\lambda_j)^{-\Ch_0 (\AB_j)} \Biggr)  \exp \Biggl( \sum_{j=1}^N \sum_{l \geq 1} s_l(\lambda_j) \Ch_l(\AB_j) \Biggr),
\end{equation}
with $\lambda_j = \lambda^{(-a_1) \dotsm (-a_{j-1})}$.
\end{thm}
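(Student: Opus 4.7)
The plan is to reduce Theorem \ref{formula} to Corollary \ref{maincoro} by a generating-function manipulation. The starting identity is
\[
\cvir(T,\delta)\;=\;\mathbf{G}_1(1,\dotsc,1),
\]
which is exactly Corollary \ref{maincoro} repackaged through the definition \eqref{dfnGj}. The goal is then to show that this equals $\mathbf{F}(\lambda=1)$ for the rational function $\mathbf{F}(\lambda)$ obtained from $\mathbf{F}$ by the substitution $x_j=\lambda^{m_j}$, with $m_j=(-a_1)\dotsm(-a_{j-1})$.

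First I would record the elementary observation that the polytope $\cP$ is tailored to the substitution. Multiplying the $j$-th defining inequality \eqref{domainofsum} by $m_j$ and using $m_j\cdot(-a_j)\dotsm(-a_{k-1})=m_k$, one finds that $\cP$ is cut out by the $N$ conditions
\[
\sum_{k\geq j} m_k z_k\;\leq\; m_jR_j\qquad(j=1,\dotsc,N),
\]
with the inequality reversed when $m_j<0$. In particular, the $j=1$ condition bounds the total exponent $\sum_k m_kz_k$ of $\lambda$, while the remaining ones bound the successive tails. Under the substitution, the polynomial $\mathbf{G}_1|_{x_j=\lambda^{m_j}}$ is a Laurent polynomial in $\lambda$ whose value at $\lambda=1$ equals $\mathbf{G}_1(1,\dotsc,1)=\cvir(T,\delta)$.

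The heart of the argument is then the identity $\mathbf{F}(\lambda)=\mathbf{G}_1|_{x_j=\lambda^{m_j}}$ as rational functions. By Lemma \ref{formaldev}, each $\mathbf{F}_j^{\mathrm{form}}$ is the Taylor expansion of $\mathbf{F}_j$ in the polydisk $\{|x_j|<1\}$, so $\mathbf{F}^{\mathrm{form}}-\mathbf{G}_1$ is a formal sum of monomials $\prod_j x_j^{z_j}$ lying outside $\cP$. Each such monomial is detected by the violation of at least one of the $N$ inequalities above; grouping them accordingly and summing, the tails assemble into rational functions in $\lambda$. The claim is that these rational functions cancel identically after substitution, so that $\mathbf{F}(\lambda)$ and $\mathbf{G}_1|_{x_j=\lambda^{m_j}}$ coincide; this forces $\mathbf{F}(\lambda)$ to be a Laurent polynomial with value $\cvir(T,\delta)$ at $\lambda=1$. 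Once this is done, the explicit limit formula \eqref{formulelim} is immediate: by Lemma \ref{formaldev} we may rewrite $\mathbf{F}_j$ in the exponential form \eqref{cvirx}, and since $\Ch_l(\AB_j)$ is nilpotent in $H^\ast(S)$ the exponential is a polynomial in the $s_l(\lambda_j)$ with a well-defined limit as $\lambda\to 1$.

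The main obstacle is precisely the vanishing of the outside-polytope tails. The strategy I would pursue is an induction on $N$ mirroring the layer-by-layer removal of the subsets $\cQ_j$ in the proof of Theorem \ref{genus0theo}: for each $j$, saturating the first $j-1$ inequalities reduces the contribution to a one-variable geometric-type summation in the direction $z_j$, which telescopes to zero after substitution precisely because of the alternating signs of the $m_j$'s. This inductive bookkeeping parallels the exactness of the $\cQ_j$-layers established in Proposition \ref{puzzle} and Lemma \ref{goesout}, and is the natural source of all the cancellations producing the polynomiality of $\mathbf{F}(\lambda)$.
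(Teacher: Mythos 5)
Your overall framing is right, but the key step is not justified, and the mechanism you attribute it to is not the correct one.

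You correctly reduce the theorem to two claims: $\cvir(T,\delta)=\mathbf{G}_1(1,\dotsc,1)$ (which is Corollary \ref{maincoro}), and the rational-function identity $\mathbf{F}(\lambda)=\mathbf{G}_1|_{x_j=\lambda^{m_j}}$. The gap is in the second. You propose to prove it by showing that the part of $\mathbf{F}^{\mathrm{form}}$ outside $\cP$ ``telescopes to zero after substitution precisely because of the alternating signs of the $m_j$'s.'' This is not where the cancellation comes from, and the alternating signs create a real difficulty rather than resolving one: since the exponents $m_j$ have mixed signs, infinitely many monomials $\prod_j x_j^{z_j}$ land on each power of $\lambda$ after the substitution $x_j=\lambda^{m_j}$, so $\mathbf{F}^{\mathrm{form}}(\lambda^{m_1},\dotsc,\lambda^{m_N})$ is not even well-defined as a formal Laurent series and no direct ``tail summation'' can be carried out. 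The actual source of the vanishing is cohomological, not combinatorial: it is the assertion, built into Theorem \ref{genus0theo}, that the polytope sum $\mathbf{G}_j(1,\dotsc,1)$ is \emph{independent} of the choice of sufficiently large $R_j$. Taking the difference between the $R_j+h$ and $R_j+h-1$ truncations yields exactly the vanishing \eqref{coeffjj} of a slice sum, and this slice sum is precisely the coefficient of $\lambda_j^{R_j+h}$ in $\mathbf{F}^{\mathrm{form}}_j(\lambda_j)\,\mathbf{G}_{j+1}(\lambda_{j+1},\dotsc,\lambda_N)$. No telescoping argument produces this; it is a consequence of the exactness of the $\cQ_j$-layers, packaged through Theorem \ref{genus0theo}.

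The paper's proof assembles these ingredients into a \emph{decreasing} induction in $j$, which also avoids the well-definedness issue above. The base case $j=N$ uses the vanishing hypothesis $A_N=0$ to make $\mathbf{F}^{\mathrm{form}}_N$ itself a polynomial, hence equal to $\mathbf{G}_N$; this is the one place where the formal series has no genuine tail. For the inductive step, one first uses Lemma \ref{formaldev} to replace $\mathbf{F}_j$ by $\mathbf{F}^{\mathrm{form}}_j$ on $|\lambda_j|<1$, then invokes the coefficient vanishing \eqref{coeffjj} to identify $\mathbf{F}^{\mathrm{form}}_j\cdot\mathbf{G}_{j+1}$ with the polynomial $\mathbf{G}_j$, and finally extends the resulting equality of rational functions from the disc $|\lambda_j|<1$ to all of $\CC^\times$. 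Your sketch does not record the use of $A_N=0$ to anchor the induction, nor the appeal to the $R_j$-independence as the engine of the cancellation, and these are exactly the two points where the proof has content.
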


\begin{proof}
We proceed by a decreasing induction on $j$ to prove
\begin{equation}\label{recurrence}
\mathbf{F}_j(\lambda_j) \dotsm \mathbf{F}_N(\lambda_N) = \mathbf{G}^{(R_j,\dotsc,R_N)}_j(\lambda_j,\dotsc,\lambda_N), \textrm{ for all $\lambda \in \CC$,}
\end{equation}
with $\lambda_j = \lambda^{(-a_1) \dotsm (-a_{j-1})}$.

\paragraph*{Initialization}
The vanishing property $A_N=0$ implies 
\begin{eqnarray*}
\mathbf{G}^{(R_N)}_N(\lambda_N) & = & \mathbf{F}^\mathrm{form}_N(\lambda_N) \qquad \textrm{ for all $\lambda_N \in \CC$,} \\
& = & \mathbf{F}_N(\lambda_N) \qquad ~~ \textrm{ for all $\left| \lambda_N \right| <1$.}
\end{eqnarray*}
Since the rational function $\mathbf{F}_N$ coincides with the polynomial function $\mathbf{G}^{(R_N)}_N$ in the open unit ball $B_\CC(0,1)$, then they are equal in $\CC$ and 
\eqref{recurrence} is true for $j=N$.

\paragraph*{Heredity}
We assume equation \eqref{recurrence} is true for some index $j+1$.
For any complex number $\lambda$ such that $\left| \lambda_j \right| <1$, we have $\mathbf{F}_j(\lambda_j) = \mathbf{F}^\mathrm{form}_j(\lambda_j)$ by Lemma \ref{formaldev}.
Combining it with the equation \eqref{recurrence} for $j+1$, we obtain
\begin{equation*}
\mathbf{F}_j(\lambda_j) \dotsm \mathbf{F}_N(\lambda_N) = \mathbf{F}^\mathrm{form}_j(\lambda_j) \mathbf{G}^{(R_{j+1},\dotsc,R_N)}_{j+1}(\lambda_{j+1},\dotsc,\lambda_N), \textrm{ when $\left| \lambda_j \right| <1$}.
\end{equation*}
Since the value of $\mathbf{G}^{(R_j,\dotsc,R_N)}_j(1,\dotsc,1)$ does not depend on the choice of sufficiently big integers $R_j \gg \dotsb \gg R_N$ (see Theorem \ref{genus0theo}), then we write, for $h \in \NN^*$,
\begin{equation*}
\mathbf{G}^{(R_j+h,R_{j+1},\dotsc,R_N)}_j(1,\dotsc,1)-\mathbf{G}^{(R_j+h-1,R_{j+1},\dotsc,R_N)}_j(1,\dotsc,1)=0.
\end{equation*}
Consequently, we get
\begin{equation}\label{coeffjj}
\sum  \prod_{k=j}^N \Ch^\vee(\cS^{z_k}\AB_k)  \frac{1}{\Td(\AB_k)} = 0,
\end{equation}
where the sum is taken over all indexes $(z_j,\dotsc,z_N) \in \NN^{N-j+1}$ such that
\begin{eqnarray*}
z_l+\sum_{k=l+1}^N (-a_l) \dotsm (-a_{k-1}) z_k & \leq & R_l \qquad \textrm{for $j+1 \leq l \leq N$,} \\
z_j+\sum_{k=j+1}^N (-a_j) \dotsm (-a_{k-1}) z_k & = & R_j + h.
\end{eqnarray*}
Observe that the left hand side of \eqref{coeffjj} is exactly the coefficient of $\lambda_j^{R_j+h}$ in
\begin{equation*}
\mathbf{F}^\mathrm{form}_j(\lambda_j) \mathbf{G}^{(R_{j+1},\dotsc,R_N)}_{j+1}(\lambda_{j+1},\dotsc,\lambda_N)
\end{equation*}
and since this coefficient is zero, we obtain
\begin{equation*}
\mathbf{F}^\mathrm{form}_j(\lambda_j) \mathbf{G}^{(R_{j+1},\dotsc,R_N)}_{j+1}(\lambda_{j+1},\dotsc,\lambda_N) = \mathbf{G}^{(R_j,\dotsc,R_N)}_j(\lambda_j,\dotsc,\lambda_N).
\end{equation*}
At last, the rational function $\lambda \mapsto \mathbf{F}_j(\lambda_j) \dotsm \mathbf{F}_N(\lambda_N)$ coincides with the rational function $\lambda \mapsto \mathbf{G}^{(R_j,\dotsc,R_N)}_j(\lambda_j,\dotsc,\lambda_N)$ when $\left| \lambda_j \right| <1$. Thus, they are equal for all $\lambda \in \CC^*$ and equation \eqref{recurrence} is true for the index $j$.
\end{proof}

We call virtual degree the integer
\begin{equation*}
\textrm{degvir} := - \sum_{j=1}^N \Ch_0(\AB_j).
\end{equation*}
Since the non-degeneracy condition implies $\textrm{rank}(A_j) \leq \textrm{rank}(B_{j+1})$ for any $j$, then the virtual degree is always a non-negative integer (possibly zero).

For each $l\geq 1$, introduce the polynomial
\begin{equation*}
\widetilde{s}_l(X) := \frac{B_l(0)}{l} + (-1)^l \sum_{k=1}^l (k-1)! X^k \gamma(l,k)
\end{equation*}
of degree $l$ and define a polynomial $\bP_k$ with values in $H^{2k}(S)$ and $\deg \bP_k \leq k$ by
\begin{equation*}
\exp \Biggl( \sum_{j=1}^N \sum_{l \geq 1} \widetilde{s}_l(X_j) \Ch_l(\AB_j) \Biggr) = \sum_{k \geq 0} \bP_k(X_1,\dotsc,X_N).
\end{equation*}

After the change of variables $\epsilon_j := \lambda_j^{-1} - 1$, the degree-$2k$ part of $\mathbf{F}(\lambda)$ is
\begin{equation}\label{partk}
\prod_{j=1}^N \biggl(\frac{\epsilon_j}{1+\epsilon_j}\biggr)^{-\Ch_0\AB_j} \cdot \bP_k(\epsilon_1^{-1},\dotsc,\epsilon_N^{-1}).
\end{equation}
The relation $\lambda_j = \lambda^{(-a_1)\dotsm(-a_{j-1})}$ with $\lambda \to 1$ yields
\begin{equation*}
\epsilon_j^{-1} \underset{\epsilon \rightarrow 0}{=}  \frac{\epsilon^{-1}}{(-a_1)\dotsm(-a_{j-1})} + O(1) \quad \textrm{and} \quad \frac{\epsilon_j}{1+\epsilon_j} \underset{\epsilon \rightarrow 0}{=} (-a_1)\dotsm(-a_{j-1}) \cdot \epsilon + O(\epsilon^2).
\end{equation*}

Let $\mathfrak{d_k} \epsilon^{-\mathfrak{k}}$ be the dominant term in $\bP_k(\epsilon_1^{-1},\dotsc,\epsilon_N^{-1})$, with $\mathfrak{d_k} \in H^{2k}(S)$ non-zero, and write the development of \eqref{partk} near $\epsilon=0$
\begin{equation*}
\mathfrak{h}_k \cdot \mathfrak{d_k} \cdot \epsilon^{\textrm{degvir}-\mathfrak{k}} + O(\epsilon^{\textrm{degvir}-\mathfrak{k}+1}), \qquad \textrm{with $\mathfrak{h}_k \in \CC^*$.}
\end{equation*}
We have $\textrm{degvir} \geq \mathfrak{k}$ because the development of \eqref{partk} near $\epsilon=0$ must converge (see Theorem \ref{formula}) and $\mathfrak{h}_k \cdot \mathfrak{d_k} \neq 0$.
Furthermore, we have already noticed that the degree of the polynomial $\bP_k(X_1,\dotsc,X_N)$ is less than $k$, hence we have $\mathfrak{k} \leq k$.
Finally, when $\textrm{degvir}>\mathfrak{k}$, the degree-$2k$ part of $\mathbf{F}(\lambda)$ tends to $0$ when $\epsilon \to 0$.

\begin{cor}
Under the hypothesis of Theorem \ref{genus0theo}, the class 
$\cvir(T,\delta)$ lies in
\begin{equation*}
\cvir(T,\delta) \in \bigoplus_{k \geq \textrm{degvir}} H^{2k}(S,\QQ).
\end{equation*}
\qed
\end{cor}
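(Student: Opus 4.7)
The plan is to read the statement off the expansion analysis of $\mathbf{F}(\lambda)$ carried out immediately before the corollary. By Theorem~\ref{formula}, $\mathbf{F}(\lambda)$ is a Laurent polynomial in $\lambda$ with $\cvir(T,\delta)=\mathbf{F}(1)$; its coefficients lie in $H^{\bullet}(S,\QQ)$ because the Chern characters, the Bernoulli constants $B_l(0)/l$, and the integers $\gamma(l,k)$ appearing in \eqref{cvirx}--\eqref{parametresl} are all rational. Splitting $\mathbf{F}(\lambda)=\sum_k\mathbf{F}^{(k)}(\lambda)$ according to cohomological degree $2k$, each piece $\mathbf{F}^{(k)}(\lambda)\in H^{2k}(S,\QQ)[\lambda^{\pm 1}]$ is itself a Laurent polynomial, hence admits a well-defined value at $\lambda=1$. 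It suffices to prove $\mathbf{F}^{(k)}(1)=0$ whenever $k<\textrm{degvir}$.

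I would then pass to the variable $\epsilon:=\lambda^{-1}-1$ and exploit the explicit formula \eqref{partk}. As $\epsilon\to 0$, the equivalent $\epsilon_j/(1+\epsilon_j)\sim (-a_1)\dotsm(-a_{j-1})\,\epsilon$ makes the first factor $\prod_{j=1}^N(\epsilon_j/(1+\epsilon_j))^{-\Ch_0\AB_j}$ behave as $\mathfrak{h}_k\,\epsilon^{\textrm{degvir}}+O(\epsilon^{\textrm{degvir}+1})$ with $\mathfrak{h}_k\in\CC^{\times}$ (invertibility uses $a_j\geq 2$). The second factor $\bP_k(\epsilon_1^{-1},\dotsc,\epsilon_N^{-1})$, being the evaluation at the inverted $\epsilon_j$'s of an $H^{2k}(S,\QQ)$-valued polynomial of total degree at most $k$, admits a leading term $\mathfrak{d}_k\,\epsilon^{-\mathfrak{k}}$ for some $0\leq\mathfrak{k}\leq k$. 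The bound $\deg\bP_k\leq k$ follows from the fact that each $\widetilde{s}_l(X_j)$ entering the exponential generating function of $\bP_k$ has degree exactly $l$ and is paired with $\Ch_l(\AB_j)\in H^{2l}(S,\QQ)$.

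Multiplying the two leading orders shows that $\mathbf{F}^{(k)}(\lambda)$ starts at order $\epsilon^{\textrm{degvir}-\mathfrak{k}}$ near $\epsilon=0$. Regularity of $\mathbf{F}^{(k)}$ at $\epsilon=0$ then forces $\textrm{degvir}-\mathfrak{k}\geq 0$, and whenever the inequality is strict the evaluation $\mathbf{F}^{(k)}(1)$ vanishes. Combined with $\mathfrak{k}\leq k$, the assumption $k<\textrm{degvir}$ gives $\mathfrak{k}\leq k<\textrm{degvir}$, hence $\mathbf{F}^{(k)}(1)=0$ in this range. Summing over degrees yields exactly $\cvir(T,\delta)\in\bigoplus_{k\geq\textrm{degvir}}H^{2k}(S,\QQ)$.

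The main delicate point is to check that the leading order of the product $\mathbf{F}^{(k)}$ is genuinely the product of the leading orders of its two factors, i.e.\ that no cancellation with lower-order contamination can occur. This is automatic here because $\mathfrak{h}_k$ is a nonzero \emph{scalar}, so it commutes with and does not interfere with the cohomological coefficient $\mathfrak{d}_k$, and the order of vanishing of a product of Laurent series is additive when the leading coefficients are nonzero; in the trivial case $\bP_k=0$ the piece $\mathbf{F}^{(k)}$ vanishes identically and there is nothing to prove.
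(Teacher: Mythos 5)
Your proof is correct and follows the same route as the paper: Theorem~\ref{formula} reduces everything to the behavior of $\mathbf{F}(\lambda)$ near $\lambda=1$, and the degree-$2k$ piece is analyzed exactly as in the paragraph preceding the corollary via the substitution $\epsilon_j=\lambda_j^{-1}-1$, the expansion \eqref{partk}, the bound $\mathfrak{k}\leq\deg\bP_k\leq k$, and the non-vanishing of the scalar prefactor $\mathfrak{h}_k$. The paper presents the same argument as the discussion immediately before the corollary (hence the \textrm{QED} symbol with no separate proof block); your write-up merely makes the logical dependencies and the edge case $\bP_k(\epsilon_1^{-1},\dotsc,\epsilon_N^{-1})\equiv 0$ slightly more explicit.
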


\subsection{Computing Polishchuk and Vaintrob's virtual class}\label{appli}
We return to the quantum singularity theory of LG orbifolds $(W,\textrm{Aut}(W)$, with an invertible polynomial $W$, and we will adapt Theorem \ref{formula} to compute the virtual class of the two-periodic complex
\begin{equation*}
(T,\delta):=p^\textrm{naive}_*(\PV \otimes \kK(\rR_{\overline{\gamma}})) \quad \textrm{over $S$ (see Sect.~\ref{cvirinvertible})}.
\end{equation*}
Here, we fix an element $\overline{\gamma} \in \left( \textrm{Aut}(W) \right)^n$, a family $\pi \colon \cC \rightarrow S$ of $W$-spin curves of genus zero with sections $\sigma_1,\dotsc,\sigma_n$ of type $\overline{\gamma}$ over a smooth base scheme $S$ and an admissible decoration $\rR_{\overline{\gamma}}$ of the graph $\Gamma_{W_{\overline{\gamma}}}$.

As the invertible polynomial $W$ is not just a loop polynomial, the two-periodic complex $(T,\delta)$ is not recursive. We have to modify this complex and to assume extra conditions to apply Theorem \ref{formula}; we proceed in three steps.
First, we construct another two-periodic complex $(T^\rR,\delta^\rR)$.
Then we prove
\begin{equation*}
\cvirPV(T^\rR,\delta^\rR) = \cvirPV(T,\delta).
\end{equation*}
Finally we assume extra conditions (see Theorem \ref{main}), so that $(T^\rR,\delta^\rR)$ turns into a non-degenerate and recursive complex with vanishing condition \eqref{qqconcave}; we conclude with Theorem \ref{formula}.

\noindent
\textit{Step $1$:}
Recall that the two-periodic complex $(T,\delta)$ is built on the vector bundles $A_1, \dotsc, A_N$ and $\widetilde{B}_1, \dotsc, \widetilde{B}_N$ on the scheme $S$ and observe that $[A_j \rightarrow \widetilde{B}_j]$ is quasi-isomorphic to $R\pi_*(\cL^\rR_j)$ with
\begin{equation}\label{fibreLR}
\cL^\rR_j:=\cL_j \Biggl(- \sum_{(\sigma_i,x_j) \in \rR_{\overline{\gamma}}} \sigma_i \Biggr).
\end{equation}
As each marked point $\sigma_i$ in $\bB_{\overline{\gamma}_j}$ is in $\rR_{\overline{\gamma}_j} \sqcup \rR_{\overline{\gamma}_{t(j)}}$, the isomorphism \eqref{spinstruct} induces
\begin{equation}\label{ssss}
{\cL^\rR_j}^{\otimes a_j} \otimes \cL^\rR_{t(j)} \hookrightarrow \omega_{\cC}.
\end{equation}
Thus the construction of Polishchuk--Vaintrob \cite[Sect.~4.2]{Polish1} gives vector bundles 
\begin{equation*}
A^\rR := A_1^\rR \oplus \dotsb \oplus A_N^\rR \quad \textrm{and} \quad B^\rR := B_1^\rR \oplus \dotsb \oplus B_N^\rR,
\end{equation*}
with morphisms
\begin{equation*}
\alpha^\rR := \alpha_1^\rR + \dotsb + \alpha_N^\rR \quad \textrm{and} \quad \beta^\rR := \beta_1^\rR + \dotsc + \beta_N^\rR,
\end{equation*}
yielding a Koszul matrix factorization $\left\lbrace - \alpha^\rR , \beta^\rR \right\rbrace$ of potential zero on the total space of $A^\rR$.
The naive push-forward gives a two-periodic complex
\begin{equation*}
p^\textrm{naive}_* \left\lbrace - \alpha^\rR , \beta^\rR \right\rbrace = (T^\rR,\delta^\rR) \quad \textrm{on $S$.}
\end{equation*}

Compare the construction of $(T^\rR,\delta^\rR)$ with the construction of $(T,\delta)$ (see \cite[Section 4.2, Step 2]{Polish1}) and observe that
\begin{equation}\label{comparison}
B_j^\rR = B_j \quad \textrm{and} \quad A_j^\rR = \ker Z^{\rR}_j,
\end{equation}
where $Z^{\rR}_j \colon A_j \rightarrow \cO^{\rR_j}$.
Then the morphisms
\begin{equation}\label{alphabeta3}
\begin{array}{ccl}
(\alpha')^\rR_j & \colon & \cS^{a_j} A^\rR_j \rightarrow  B_{t(j)}^\vee, \\
(\alpha'')^\rR_j & \colon & \cS^{a_j-1} A^\rR_j \otimes A^\rR_{t(j)} \rightarrow B_j^\vee, \\
\beta^\rR_j & \colon & A^\rR_j \rightarrow B_j,
\end{array}
\end{equation}
are naturally induced by the morphisms $\alpha'_1,\dotsc, \alpha'_N$, $\alpha''_1,\dotsc, \alpha''_N$ and $\beta_1,\dotsc, \beta_N$.
To fix ideas, over a geometric point $s \in S$, we have
\begin{equation}\label{Serreduality3}
\begin{split}
(\alpha')^\rR_{j,s} \colon & \mathrm{Sym}^{a_j} H^0(\cC_s,\cL^\rR_{j,s}) \rightarrow H^1(\cC_s,\cL^\rR_{t(j),s})^\vee \qquad \textrm{and} \\
(\alpha'')^\rR_{j,s} \colon & \mathrm{Sym}^{a_j-1}(H^0(\cC_s,\cL^\rR_{j,s})) \otimes H^0(\cC_s,\cL^\rR_{t(j),s}) \rightarrow H^1(\cC_s,\cL^\rR_{j,s})^\vee
\end{split}
\end{equation}
induced by \eqref{ssss} (and similarly to \eqref{Serreduality}) and the morphism $Z_{j,s}^{\rR} \colon H^0(\cC,\cL_{j,s}) \rightarrow \cO^{\rR}_{j,s}$ comes from the exact sequence
\begin{equation*}
0 \rightarrow \cL^\rR_{j,s} \rightarrow \cL_{j,s} \rightarrow \cL_{j,s}\left| \right._{\sum_{(\sigma_i,x_j) \in \rR_{\overline{\gamma}}} \sigma_j} \rightarrow 0.
\end{equation*}

\noindent
\textit{Step $2$:}
We go back to the base scheme $S$ and observe
\begin{equation}\label{twotermegal}
R\pi_*(\cL^\rR_j) = [A_j \rightarrow \widetilde{B}_j] = [ A^\rR_j \rightarrow B_j ] \quad \textrm{in } \mathcal{D}(S).
\end{equation}
Denote by $X^\rR$ the total space of $A^\rR$ and by $i \colon X^\rR \hookrightarrow X$ the inclusion in $X$.
The section $\widetilde{\beta}$ induces the regular section $Z^*\mathfrak{b}_{\rR_{\overline{\gamma}}}$ of the sheaf
\begin{equation*}
p^*\widetilde{B}/p^*B \simeq Z^*(\cO^{\rR_{\overline{\gamma}}})^\vee,
\end{equation*}
whose zero locus is $X^\rR$.
The sections $\widetilde{\alpha}$ and $\widetilde{\beta}$ induce $\alpha^\rR$ and $\beta^\rR$ on $X^\rR$.
The zero loci of $\{\alpha^\rR,\beta^\rR\}$ and $\left\lbrace \alpha,\beta \right\rbrace $ coincide with the zero section $S$ in $X^\rR \subset X$, so they are proper.
By \cite[Proposition 4.3.1]{Polish1}, we obtain $\left\lbrace \alpha , \beta \right\rbrace \simeq i_* \left\lbrace \alpha^\rR , \beta^\rR \right\rbrace$, and via the naive push-forward,
\begin{equation}\label{qis}
(T,\delta) \simeq (T^\rR,\delta^\rR).
\end{equation}

\noindent
\textit{Step $3$:}
According to the FJRW terminology, we say that a variable $x_j$ is concave for the decoration $\rR_{\overline{\gamma}}$ if
\begin{equation*}
H^0(\cC,\cL^\rR_j)=0,
\end{equation*}
for every $W$-spin curve $\cC$ of genus-zero and type $\overline{\gamma}$,
with $\cL^\rR_j$ defined by \eqref{fibreLR}.

By \cite[Proposition 3.1]{LG/CY}, if
\begin{equation}\label{quasiconcave}
w_j \left| \right. d ~,~~ \gamma_j(i) \in \left\langle e^{2\ci \pi \fq_j} \right\rangle ~~ \forall i \quad \textrm{and} \quad \rR_{\overline{\gamma}_j} = \bB_{\overline{\gamma}_j},
\end{equation}
i.e.~if the line bundle $\cL_j$ is a root of $\omega_\textrm{log}$ and all the broad marked points are crossed for it,
then the variable $x_j$ is concave for the decoration $\rR_{\overline{\gamma}}$.

\begin{exa*}
For each index $j$ satisfying $t(j)=j$, the variable $x_j$ is concave for every admissible decoration.
Indeed, the monomial $x_j^{a_j}x_{t(j)}$ of $W$ gives $w_j \left| \right. d$, and by definition of an admissible decoration, any vertex followed by itself is crossed.
\end{exa*}

\begin{thm}\label{main}
Consider an invertible polynomial $W$, an element $\overline{\gamma} \in \textrm{Aut}(W)^n$ and admissible decorations $\rR_{\gamma(1)}, \dotsc, \rR_{\gamma(n)}$ of the graph $\Gamma_W$.
Assume that each connected component of $\Gamma_W$ contains a concave variable for $\rR_{\overline{\gamma}}:=\rR_{\gamma(1)} \sqcup \dotsb \sqcup \rR_{\gamma(n)}$. Then this is unambiguous to define
\begin{equation}\label{deflambda}
\begin{array}{lcll}
\lambda_{t(j)} & = & \lambda_j^{-a_j} & \textrm{if $x_j$ is non-concave for $\rR_{\overline{\gamma}}$, with } a_j:=f_W(v_j), \\
\lambda_j & = & \lambda & \textrm{for every remaining index $j$,}
\end{array}
\end{equation}
and the evaluation of Polishchuk--Vaintrob's virtual class \eqref{virtualcomplex} in genus zero on the state $e(\rR_{\overline{\gamma}})$ equals
\begin{equation}\label{formulelim2}
\begin{split}
\cvirPV(e(\rR_{\overline{\gamma}})) & =  \lim_{\lambda \to 1}  \prod_{j=1}^N \fc_{\lambda_j}(-R \pi_*(\cL_j^\rR)) \\
 & =  \lim_{\lambda \to 1}  \prod_{j=1}^N (1-\lambda_j)^{r_j} \fc_{\lambda_j}(-R \pi_*(\cL_j)) \\
 & =  \lim_{\lambda \to 1}  \Biggl(\prod_{j=1}^N (1-\lambda_j)^{-\Ch_0 (R \pi_*(\cL_j))+r_j} \Biggr) \\
 & \qquad \quad \cdot \exp \Biggl( \sum_{j=1}^N \sum_{l \geq 1} s_l(\lambda_j) \Ch_l(R \pi_*(\cL_j)) \Biggr)\\
 & \in \bigoplus_{k \geq \mathrm{degvir}} H^{2k}(\sS_{0,n}, \QQ),
\end{split}
\end{equation}
with the characteristic class $\fc$ defined by \eqref{charclass2}, the function $s_l(x)$ by \eqref{parametresl} and the virtual degree by
\begin{equation*}
\mathrm{degvir}  =  \sum_{j=1}^N -\Ch_0 (R \pi_*(\cL_j))+r_j \quad \in \NN, \qquad \textrm{with } r_j := \mathrm{card } (\rR_{\overline{\gamma}_j}).
\end{equation*}
\end{thm}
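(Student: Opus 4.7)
My plan is to reduce the statement, via the quasi-isomorphism \eqref{qis}, to a computation of $\cvir(T^\rR,\delta^\rR)$ to which Theorem~\ref{formula} applies. Steps~1 and~2 of Section~\ref{appli} already supply the isomorphism $(T,\delta)\simeq(T^\rR,\delta^\rR)$ in the category of two-periodic complexes, and the identification \eqref{twotermegal} identifies $R\pi_*\cL_j^\rR$ with the two-term complex $[A_j^\rR\to B_j]$ up to quasi-isomorphism. Since the definition \eqref{cvirrecursiv} of the virtual class depends only on the cohomology sheaves and on the Todd/Chern data of the two-term representative of $R\pi_*\cL_j^\rR$, this reduction is faithful. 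Next, because each morphism $\alpha_j^\rR$ and $\beta_j^\rR$ couples only $\cL_j^\rR$ with $\cL_{t(j)}^\rR$ and those line bundles lie in a common connected component of $\Gamma_W$, the Koszul \MF $\{-\alpha^\rR,\beta^\rR\}$, hence $(T^\rR,\delta^\rR)$, factors as a tensor product over the connected components of $\Gamma_W$. Multiplicativity of Chern characters and Todd classes then reduces the claim to a single Fermat, chain, or loop component.

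For each such component, I would pick a concave variable (granted by the hypothesis) and cyclically relabel the vertices so that the concave variable occupies the last position~$c$. Concavity then allows the choice of a universal two-term representative $A_c^\rR=0$ and $B_c^\rR=R^1\pi_*\cL_c^\rR$. In the Fermat case this trivializes the component; in the chain case the example following \eqref{quasiconcave} shows the last variable is always concave and the cyclic term $\alpha_c$ vanishes automatically because $A_c^\rR=0$; in the loop case the cyclic convention $A_{c+1}^\rR=A_1^\rR$ of Definition~\ref{defrecursiveMF} precisely matches the monomial $x_c^{a_c}x_1$, and again $\alpha_c=0$ breaks the cycle. In all three cases $(T^\rR,\delta^\rR)$ acquires the structure of a recursive complex in the sense of Definition~\ref{defrecursiveMF} with the vanishing condition $A_N^\rR=0$ of \eqref{qqconcave}. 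Non-degeneracy of each $(\alpha')_j^\rR$ follows from the explicit Serre-dual description \eqref{Serreduality3}: it reduces to the statement that evaluation of sections of $\cL_j^\rR$ at points of $\cC$ has no common zero in a generic sense, a classical argument formalized for spin curves in \cite[Sect.~3.3]{ChiodoJAG}. Applying Theorem~\ref{formula} (via \eqref{formulelim}) component by component then yields
\begin{equation*}
\cvir(T^\rR,\delta^\rR)=\lim_{\lambda\to 1}\prod_{j=1}^N\fc_{\lambda_j}\bigl(-R\pi_*\cL_j^\rR\bigr),
\end{equation*}
with the exponents $\lambda_j$ fixed by $\lambda_{t(j)}=\lambda_j^{-a_j}$ along each component, initialized by $\lambda_j=\lambda$ at each designated starting vertex; this matches the convention \eqref{deflambda}. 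This establishes the first line of \eqref{formulelim2}.

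To pass to the second and third lines, I would use the exact sequence $0\to\cL_j^\rR\to\cL_j\to\cL_j|_{D_j}\to 0$ with $D_j:=\sum_{(\sigma_i,x_j)\in\rR_{\overline{\gamma}}}\sigma_i$. The rigidification \eqref{rigid} gives $R\pi_*(\cL_j|_{D_j})\simeq\cO_S^{r_j}$, so in $K^0(S)$ we have $[R\pi_*\cL_j]=[R\pi_*\cL_j^\rR]+[\cO_S^{r_j}]$. Since $\fc_t(\cO_S)=1-t$ and $\fc_t$ is multiplicative on $K$-theory, this yields $\fc_{\lambda_j}(-R\pi_*\cL_j^\rR)=(1-\lambda_j)^{r_j}\fc_{\lambda_j}(-R\pi_*\cL_j)$, which is the second line; expanding via \eqref{charclass2} and \eqref{cvirx}-\eqref{parametresl} produces the third, and the virtual-degree statement follows from the analysis of leading behavior performed at the end of Section~\ref{leadgiven}. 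The main obstacle I anticipate is precisely the verification that each connected component of $\Gamma_W$ genuinely fits into the recursive complex framework with $A_N=0$: the Fermat and chain cases are immediate, but the loop case requires one to exploit the cyclic convention of Definition~\ref{defrecursiveMF} and relabel vertices so that the chosen concave variable breaks the cyclic differential $\alpha_c$. Once this geometric bookkeeping is in place, Theorem~\ref{formula} does the remaining analytic work.
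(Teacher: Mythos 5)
Your plan reproduces the structure of the paper's proof: reduce to $(T^\rR,\delta^\rR)$ via \eqref{qis}, identify $[A_j^\rR\to B_j]\simeq R\pi_*\cL_j^\rR$, factor over components, use concavity to kill an $A_j^\rR$, verify non-degeneracy via Serre duality, and apply Theorem~\ref{formula}; your K-theoretic passage from $\cL_j^\rR$ to $\cL_j$ via the exact sequence and $\fc_t(\cO_S)=1-t$ is a correct and in fact more explicit rendering of what the paper leaves implicit. However, there is one genuine gap in the bookkeeping of the $\lambda_j$'s. You propose to pick \emph{one} concave variable per connected component, relabel so it occupies the last slot, and then run the recursion $\lambda_{t(j)}=\lambda_j^{-a_j}$ over the entire component starting from a \emph{single} initialization $\lambda_{\text{start}}=\lambda$. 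This does \emph{not} reproduce the convention \eqref{deflambda} when a component carries more than one concave variable. The theorem's prescription imposes $\lambda_{t(j)}=\lambda_j^{-a_j}$ only for non-concave $j$ and resets $\lambda_j=\lambda$ at \emph{every} index whose predecessor under $t$ is concave (or absent). Concretely, for the chain $W=x_1^2x_2+x_2^2x_3+x_3^3$ with weights $(1,1,1)$ and $d=3$, both $x_2$ and $x_3$ are concave; the theorem prescribes $(\lambda_1,\lambda_2,\lambda_3)=(\lambda,\lambda^{-a_1},\lambda)$, whereas your recursion gives $(\lambda,\lambda^{-a_1},\lambda^{a_1a_2})$. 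Your formula still computes $\cvirPV$ correctly as a limit (Theorem~\ref{formula} applies to your single recursive complex since extra $A_j^\rR=0$ are harmless and non-degeneracy is then trivial there), but it proves a \emph{different} formula than the one stated in \eqref{formulelim2}, so the sentence ``this matches the convention \eqref{deflambda}'' is false in general. The fix is exactly what the paper does: erase the arrows out of \emph{all} concave vertices (not just one), breaking each component into several oriented lines, and treat each line as its own recursive complex with its own $\lambda$-initialization. That produces precisely the $\lambda_j$-assignment of \eqref{deflambda}, as a disjoint union of tensor factors, and the rest of your argument then goes through unchanged.
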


\begin{proof}
First, we observe
\begin{equation*}
\Ch_0 (R \pi_*(\cL_j))-r_j = \Ch_0 (R \pi_*(\cL_j^\rR)).
\end{equation*}
Then we prove that $(T^\rR,\delta^\rR)$ is a non-degenerate recursive complex with vanishing condition.
Indeed, let us alter the graph $\Gamma_W$ as follows.
For each concave variable $x_j$, erase the arrow from $v_j$ to $v_{t(j)}$.
As every connected component contains a concave variable, we get a disjoint union of oriented graphs, which are lines from a tail to a head (in the direction of arrows).
Once we draw an arrow from the head to the tail, each oriented graph corresponds to a recursive complex with vanishing condition, as follows.
The vertex $v_j$ corresponds to the vector bundles $A^\rR_j$ and $B_j$, with the morphism $\beta^\rR_j$; the arrow from $v_j$ to $v_{t(j)}$ corresponds to the morphisms $(\alpha')^\rR_j$ and $(\alpha'')^\rR_j$; the vector bundle $A^\rR_k$ which corresponds to the head is zero.
With this representation, this is clear that $(T^\rR,\delta^\rR)$ decomposes as a tensor product of recursive complexes with vanishing condition, corresponding to the connected components of the modified graph.
If we prove the non-degeneracy conditions for $(T^\rR,\delta^\rR)$, then we apply Theorem \ref{formula} for each term of the tensor product, and the product of the resulting formulas gives the virtual class $\cvirPV(T^\rR,\delta^\rR)$.

Over a geometric point $s \in S$, the morphism $(\alpha')^\rR_{j,s}$ is given by
\begin{equation*}
\mathrm{Sym}^{a_j}H^0(\cC_s,\cL^\rR_{j,s}) \rightarrow H^0(\cC_s,(\cL^\rR_{j,s})^{a_j}) \simeq H^0(\cC_s,\omega_{\cC_s} \otimes (\cL^\rR_{t(j),s})^\vee) \simeq H^1(\cC_s,\cL^\rR_{t(j),s})^\vee,
\end{equation*}
and we see that the morphism $(\alpha')^\rR_j$ is non-degenerate (see Definition \ref{non-degenera}).
\end{proof}

\begin{rem}
Theorem \ref{main}, together with the expression of the Chern character of $R\pi_*\cL_j$ (see \cite[Theorem 1.1.1]{Chiodo1}) and a modified version of the algorithm \cite{Faber}, leads to a computer program expressing Polishchuk--Vaintrob's virtual class in terms of psi-classes and boundary terms, and giving numerical values for the invariants of the cohomological field theory \eqref{invariantsPV}.
When at least one of the decorations at the marked points is not balanced, the formula of Theorem \ref{main} vanishes. 
\end{rem}

\begin{rem}\label{3pointt}
The factorization and index zero properties of \cite[Sect.~5]{Polish1} constitute applications of Theorem \ref{main}.
For instance, we show that every three-point correlator is a product of terms
\begin{equation}\label{3pointcor}
\frac{1-\lambda_{t(j)}}{1-\lambda_j} \underset{\lambda \to 1}{\longrightarrow} -a_j
\end{equation}
and possibly of terms $1-\lambda_j \underset{\lambda \to 1}{\longrightarrow} 0$, happening exactly when $\textrm{degvir}>0$.
Hence, the virtual class $(\cvirPV)_{0,3}$ vanishes if and only if its degree $\textrm{degvir}$ is non-zero. 
\end{rem}

\begin{exa*}
By \cite[Lemma 6.1.1]{Polish1}, the bilinear pairing of the state space satisfies
\begin{equation*}
\left( e(\rR_\gamma) , e(\rR'_{\gamma'}) \right) = \langle e(\rR_\gamma) , e(\rR'_{\gamma'}) , e_\grj  \rangle^\textrm{PV}_{0,3} ,
\end{equation*}
where $e_\grj$ is the unit element corresponding to the unique admissible decoration for the grading element $\grj$, defined in \eqref{gradingelement} (this decoration is empty).
The component $\sS_{0,3}(\gamma,\gamma',\grj)$ is non-empty only if $\gamma'=\gamma^{-1}$, then for any index $j$ we have
\begin{equation*}
- \Ch_0 (R \pi_*(\cL_j)) = \left\lbrace \begin{array}{ll}
-1 & \textrm{if } \gamma_j=1, \\
0 & \textrm{otherwise.}
\end{array}\right. 
\end{equation*}
Finally, we recover \eqref{pairingcomput}.
\end{exa*}

\begin{exa*}
Consider the polynomial $W=x_1^2 x_2+ x_2^3 x_3+x_3^5 x_4 + x_4^{10}x_5+x_5^{11}$ with weights $(4,3,2,1,1)$ and degree $11$.
We let as an exercise for the reader to compute
\begin{equation}\label{threepoint}
\langle e_{\grj^3},e_{\grj^3},e_{\grj^6} \rangle=-2
\textrm{ and }
\langle e_{\grj^2}^4e_{\grj^6} \rangle = -\frac{2}{121},
\end{equation}
where we lighten unambiguously notation as $e_\gamma := e(\rR_\gamma)$ and where $\grj$ is the grading element defined in \eqref{gradingelement}.
We stress that concavity fails in these two situations.
\end{exa*}

\begin{rem}
A virtual class with a broad entry can be non-zero.
For instance, consider the $D_5$-singularity $x_1^2x_2+x_2^4$ with weights $(3,2)$ and degree $8$; we leave as an exercise the computation
\begin{equation*}
\cvirPV (e_{\grj^0}, e_{\grj^2}, e_{\grj^3}, e_{\grj^3}, e_{\grj^3})_{0,5} = \textrm{o}^*\psi_2 \in H^2(\mathcal{S}_{0,5}).
\end{equation*}
\end{rem}


\subsection{Compatibility of virtual classes for invertible polynomials}\label{compatsec}
We prove in this section the compatibility between Polishchuk--Vaintrob's class and FJRW virtual class for (almost) every invertible polynomials in every genus.
In Sect.~\ref{computations}, this compatibility applied to chain polynomials is used to deduce mirror symmetry for FJRW theory from Theorem \ref{main}.

\begin{thm}\label{compat}
Let $W$ be an invertible polynomial together with its maximal group $\mathrm{Aut}(W)$.
Assume that no monomials of $W$ are of the form $x^ay+y^2$ or $x^ay+y^2x$.
Then, we have an isomorphism $\Phi \colon \st \rightarrow \st$ rescaling the broad sector (each narrow state is invariant under $\Phi$ and each broad sector is stable under $\Phi$) such that
\begin{equation}\label{compatequal}
\cvir^\mathrm{FJRW} (u_1,\dotsc,u_n)_{g,n} = \cvir^\mathrm{PV} (\Phi(u_1),\dotsc,\Phi(u_n))_{g,n}
\end{equation}
for every elements $u_1, \dotsc, u_n$ of the state space and for every genus $g$.
This isomorphism preserves the pairing (and the grading), i.e.~
\begin{equation}\label{compatpair}
\Phi^\textrm{T} \cdot \eta \cdot \Phi = \eta,
\end{equation}
where $\eta$ is the inverse matrix of the pairing.
\end{thm}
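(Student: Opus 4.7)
I would reduce arbitrary correlators to narrow-only correlators via the cohomological field theory axioms of both $\cvirFJRW$ and $\cvirPV$, with the narrow-sector compatibility of Chang--Li--Li as the base case. For narrow insertions, \cite[Theorem 1.2]{Li2} gives $\cvirFJRW = \cvirPV$; this forces $\Phi$ to be the identity on the narrow part of $\st$ and supplies the starting point of the induction to follow.

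To define $\Phi$ on a broad sector $\st_\gamma$, I would match genus-zero three-point correlators of the form $\langle u, v, e_\grj \rangle_{0,3}$ with $u \in \st_\gamma$, $v \in \st_{\gamma^{-1}}$, and $e_\grj$ playing the role of the CohFT unit. Theorem \ref{main} computes the PV side explicitly: by Remark \ref{3pointt}, every such correlator is a product of the scalars $-a_j$ from \eqref{3pointcor} and collectively recovers the pairing formula \eqref{pairingcomput}. On the FJRW side, the unit axiom of a CohFT identifies $\cvirFJRW(u,v,e_\grj)_{0,3}$ with the FJRW pairing $(u,v)^{\mathrm{FJRW}}$, which a priori differs from the PV pairing on the broad sector. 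I would then define $\Phi_\gamma \colon \st_\gamma \to \st_\gamma$ as the linear map intertwining these two pairings, so that equation \eqref{compatpair} holds tautologically, and assemble the $\Phi_\gamma$ into a global $\Phi$.

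To propagate the match from three-point correlators to all $(g,n)$, I would induct on the number $k$ of broad insertions. For $k \geq 1$, fix a broad insertion $u_1$ and an auxiliary insertion $u_2$, and pull back the correlator along the gluing map from the boundary stratum of $\overline{\cM}_{g,n}$ where $u_1$ and $u_2$ sit on a stable genus-zero bubble meeting the rest of the curve at a single node. The factorization axiom, which holds for both theories by \cite[Theorem 4.1.8]{FJRW} and \cite[Sect.~5]{Polish1}, expresses this pullback as a sum, over a dual basis $\{e_\mu, e^\mu\}$ of $\st$, of products of the three-point correlator $\cvir^\bullet(u_1,u_2,e_\mu)_{0,3}$ with a correlator $\cvir^\bullet(e^\mu, u_3, \ldots, u_n)_{g,n-1}$ having at most $k-1$ broad insertions. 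The induction hypothesis handles the second factor and the three-point comparison of the previous step handles the first, producing \eqref{compatequal}.

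The hard part will be verifying faithfulness of this bubble-off reconstruction: for each broad sector $\st_\gamma$ I must find enough three-point couplings with narrow partners so that the reconstruction captures every component of the broad state. Combining the explicit pairing \eqref{pairingcomput} with the vanishing criterion of Theorem \ref{main}, I expect this faithfulness to hold except precisely when $W$ contains a monomial pair $x^ay+y^2$ or $x^ay+y^2 x$; in those excluded configurations the combinatorics of admissible balanced decorations from Section~\ref{sectionstatespace} forces every three-point coupling between the relevant broad sector and narrow partners to degenerate, severing the inductive bridge. The hypothesis of the theorem excludes exactly these two cases, so outside of them the induction closes and \eqref{compatequal} holds in every genus.
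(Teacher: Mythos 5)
The central step of your proposal—defining $\Phi_\gamma$ by matching the three-point correlators $\langle u,v,e_\grj\rangle_{0,3}$ with $u\in\st_\gamma$, $v\in\st_{\gamma^{-1}}$—does not work. Both $\cvirFJRW$ and $\cvirPV$ are cohomological field theories on the \emph{same} state space $\st$ with the \emph{same} pairing and the \emph{same} unit $e_\grj$, so the CohFT metric axiom forces $\cvirFJRW(u,v,e_\grj)_{0,3}=(u,v)=\cvirPV(u,v,e_\grj)_{0,3}$ already. There are not two pairings to intertwine; your proposed $\Phi$ would collapse to the identity and could not account for the nontrivial rescaling that the theorem is asserting. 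Indeed, the claim \eqref{compatpair} that $\Phi^\mathrm{T}\eta\Phi=\eta$ refers to a single $\eta$, which is incompatible with your reading of it as "making two distinct pairings agree."

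The paper avoids this by choosing a three-point correlator that is \emph{not} the pairing: for a chain, it introduces the diagonal automorphism $\gru$ (whose exponents are built from $1/((-a_j)\dotsm(-a_N))$), verifies that the two complementary states $e_{\gru\cdot\gamma^{-1}}$ and $e_{\grj\cdot\gru^{-1}}$ are narrow, uses Theorem~\ref{main} (via Remark~\ref{3pointt}, degvir $=0$) to show $\cvirPV(e_\gamma,e_{\gru\cdot\gamma^{-1}},e_{\grj\cdot\gru^{-1}})_{0,3}\neq 0$, and then uses factorization to see the corresponding FJRW correlator is nonzero. The rescaling constant $c_\gamma$ in \eqref{constc} is the ratio of these two nonzero numbers, and it is genuinely a new piece of data relative to the pairing. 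For a loop with an even number of variables the same idea requires two automorphisms $\gru,\grv$ and a $2\times 2$ invertibility check because the identity sector is two-dimensional. Two smaller omissions in your plan: you do not invoke the sums-of-singularities axiom, which the paper uses first to reduce to a single Fermat/chain/loop factor; and your explanation of why $x^ay+y^2$ and $x^ay+y^2x$ must be excluded is speculative—the actual constraint comes from the required narrowness of the auxiliary states built from $\gru$ (or $\gru,\grv$), which fails in exactly those two configurations. Your induction-on-broad-insertions via boundary gluing is in the right spirit, but without the explicit construction of narrow partners it does not close: a dual basis element $e^\mu$ appearing in the gluing formula has no reason to be narrow unless the other two insertions are engineered to force it.
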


We sketch the proof of Theorem \ref{compat}:
\begin{enumerate}
\item By the property called ``sums of singularities'' (see \cite[Theorem 4.1.8 (8)]{FJRW} and \cite[Theorem 5.8.1]{Polish1}), we treat separately the cases of Fermat, chain, and loop polynomials.
\item By \cite[Theorem 1.2]{Li2}, equation \eqref{compatequal} is true when $u_1, \dotsc, u_n$ are narrow states. This proves the Fermat case and the loop case with an odd number of variables.
\item For each broad state $u_0$, we look for two narrow states $u_1$ and $u_2$ such that the virtual class $\cvir^\mathrm{PV}(u_0,u_1,u_2)_{0,3}$ is non-zero.
\item Using the factorization property (see \cite[Theorem 4.1.8 (6)]{FJRW}, \cite[Sect.~5.3]{Polish1}) of a cohomological field theory, we define a suitable morphism $\Phi$.
\item Using again the factorization property, we prove that $\Phi$ is an isomorphism and respects the pairing.
\end{enumerate}

For a chain polynomial $W$, there is a unique admissible decoration $\rR_\gamma$ attached to a given automorphism $\gamma \in \Aut(W)$; we lighten notation as $e_\gamma := e(\rR_\gamma)$.
Introduce the diagonal automorphism $\gru := \textrm{diag}(\exp 2 \ci \pi u_1,\dotsc,\exp 2 \ci \pi u_N )$ with
\begin{equation*}
u_j := \frac{1}{(-a_j) \dotsm (-a_N)}.
\end{equation*}
Fixing a broad state $e_\lambda \neq 0$, we check that the states $e_{\gru \cdot \lambda^{-1}}$ and $e_{\grj \cdot \gru^{-1}}$ are narrow.
By Remark \ref{3pointt}, we prove that the genus-zero virtual class
$\cvir^\mathrm{PV}(e_\lambda, e_{\gru \cdot \lambda^{-1}} , e_{\grj \cdot \gru^{-1}})_{0,3}$ is non-zero, since its virtual degree vanishes.
Using the factorization property, the corresponding FJRW class $\cvir^\mathrm{FJRW}(e_\lambda, e_{\gru \cdot \lambda^{-1}} , e_{\grj \cdot \gru^{-1}})_{0,3}$ is also non-zero.
We define
\begin{equation}\label{constc}
\Phi(e_\gamma) = \left\lbrace 
\begin{array}{ll}
e_\gamma & \textrm{if $e_\gamma$ is a narrow state} \\
\frac{\cvir^\mathrm{PV} (e_\gamma, e_{\gru \cdot \gamma^{-1}} , e_{\grj \cdot \gru^{-1}})_{0,3}}{\cvir^\mathrm{FJRW} (e_\gamma, e_{\gru \cdot \gamma^{-1}} , e_{\grj \cdot \gru^{-1}})_{0,3}}  e_\gamma & \textrm{if $e_\gamma$ is a broad state.} \\
\end{array}\right. 
\end{equation}
Using again the factorization property, we prove equalities \eqref{compatequal} and \eqref{compatpair}.

For a loop polynomial $W$ with an even number of variables, there are two broad states in the basis $\left\lbrace e(\rR_\gamma)\right\rbrace$, coming from the identity automorphism $\gamma=1$:
\begin{equation*}
\begin{array}{lclclcl}
e_- & := & e(\rR^-_1) \quad  & \textrm{ with } & \rR^-_1 & := & \left\lbrace x_{2j+1} \right\rbrace_j,  \\
e_+ & := & e(\rR^+_1) \quad & \textrm{ with } & \rR^+_1 & := & \left\lbrace x_{2j} \right\rbrace_j.
\end{array}
\end{equation*}
Introduce two diagonal automorphisms $\gru := \textrm{diag}(\exp 2 \ci \pi u_1,\dotsc,\exp 2 \ci \pi u_N )$ and $\grv := \textrm{diag}(\exp 2 \ci \pi v_1,\dotsc,\exp 2 \ci \pi v_N)$ with
\begin{equation*}
u_j  :=  \left\lbrace 
\begin{array}{ll}
\frac{1}{a_1 \dotsm a_N-1} & \textrm{ if } j=1 \\
\frac{(-a_1) \dotsm (-a_{j-1})}{a_1 \dotsm a_N-1} & \textrm{ if } 2 \leq j \leq N \\
\end{array} \right., \quad 
v_j := \left\lbrace 
\begin{array}{ll}
\frac{(-a_2) \dotsm (-a_N)}{a_1 \dotsm a_N-1} & \textrm{ if } j=1 \\
\frac{1}{a_1 \dotsm a_N-1} & \textrm{ if } j=2 \\
\frac{(-a_2) \dotsm (-a_{j-1})}{a_1 \dotsm a_N-1} & \textrm{ if } 3 \leq j \leq N \\
\end{array} \right. .
\end{equation*}
We consider the matrices
\begin{equation*}
B^\mathrm{PV} := \left( \begin{array}{cc}
\cvirPV(e_-,e_\gru,e_{\grj \cdot \gru^{-1}})_{0,3} & \cvirPV(e_-,e_\grv,e_{\grj \cdot \grv^{-1}})_{0,3} \\
\cvirPV(e_+,e_\gru,e_{\grj \cdot \gru^{-1}})_{0,3} & \cvirPV(e_+,e_\grv,e_{\grj \cdot \grv^{-1}})_{0,3} \\
\end{array}\right)
\end{equation*}
and $B^\mathrm{FJRW}$, where we replace each PV by FJRW.
We check that the states $e_\gru, e_{\grj \cdot \gru^{-1}}$, $e_\grv$, and $e_{\grj \cdot \grv^{-1}}$ are narrow and that the matrix $B^\mathrm{PV}$ is invertible; precisely, we have
\begin{equation*}
B^\mathrm{PV} := \left( \begin{array}{cc}
(-a_1) (-a_3) (-a_5) \dotsm (-a_{N-1}) & 1 \\
1 & (-a_2) (-a_4) (-a_6) \dotsm (-a_N) \\
\end{array}\right).
\end{equation*}
Using the factorization property, the matrix $B^\mathrm{FJRW}$ is also invertible and we define
\begin{equation*}
\Phi(u) := \left\lbrace 
\begin{array}{ll}
u & \textrm{ if $u$ is narrow,} \\
\eta \cdot B^\mathrm{PV} \cdot (B^\mathrm{FJRW})^{-1} \cdot \eta^{-1} (u) & \textrm{ if $u$ is broad,}
\end{array}\right. 
\end{equation*}
where the matrices $B^\mathrm{PV}$ and $B^\mathrm{FJRW}$ are written in the basis $(e_-,e_+)$.
Using again the factorization property, we prove equalities \eqref{compatequal} and \eqref{compatpair}.

\section{Computing the J-function}\label{computations}
In this last section, we focus on genus-zero FJRW theory of chain polynomials with maximal group of symmetries. Thus, we can always apply Theorem \ref{main} about Polishchuk--Vaintrob's class, and by Theorem \ref{compat}, we know all FJRW invariants, up to a rescaling of the broad sectors.
In Sect.~\ref{4.2}, we present Givental's theory, in particular we define the Lagrangian cone and the J-function.
In Sect.~\ref{4.3}, we construct a symplectic operator which connects the so-called untwisted theory to FJRW theory.
In Sect.~\ref{4.4}, we apply this operator to a function lying on the untwisted Lagrangian cone and we obtain a big I-function lying on the FJRW Lagrangian cone, see Theorem \ref{bigItheorem}.
Specializing the argument of the big I-function to a line in the state space and restricting to polynomials of Calabi--Yau type, we get a solution of the Picard--Fuchs equation of the mirror polynomial.
Up to a change of variables and a rescaling of this solution, we end with a part of the small J-function of FJRW theory 
and we prove mirror symmetry Theorem \ref{thmfinal}.

\subsection{Givental's formalism}\label{4.2}
What follows is summary of Givental's theory \cite{Givental}, a way to encode genus-zero invariants in a Lagrangian submanifold of a symplectic space.
We restrict here on the aspects of the theory which we need, but the formalism is more powerful and involves also higher genus invariants.
The progress of the text up to the end is a generalization of \cite[Sect.~4]{LG/CYquintique} or \cite[Sect.~3]{LG/CY}, following the method used in \cite{Coates3} for twisted Gromov--Witten invariants.

From now on to the end, we work on genus-zero FJRW theory with a Calabi--Yau chain polynomial $W$,
\begin{equation*}
W = x_1^{a_1}x_2 + \dotsb + x_{N-1}^{a_{N-1}} x_N + x_N^{a_N+1},
\end{equation*}
with weights $(w_1,\dotsc,w_N)$ and degree
\begin{equation}\label{CYcondd}
d=w_1+ \dotsb + w_N \qquad \textrm{(Calabi--Yau condition)}.
\end{equation}
Observe that for a given automorphism $\gamma$, there is a unique admissible decoration $\rR_\gamma$; we lighten notation as $e_\gamma:=e(\rR_\gamma)$.
According to Theorem \ref{compat}, there is a constant $c_\gamma \in \CC^*$ defined by \eqref{constc} such that $\Phi(e_\gamma) = c_\gamma \cdot e_\gamma$.
We recall that $c_\gamma \cdot c_{\gamma^{-1}} = 1$ and that for a narrow state $e_\gamma$, the constant is $c_\gamma=1$.
We set
\begin{equation*}
\e_\gamma := \frac{1}{c_\gamma} \cdot e_\gamma.
\end{equation*}

The Givental space $\cH$ is the symplectic vector space
\begin{equation*}
\cH := \st \left[ z \right] [\![  z^{-1} ]\!] 
\end{equation*}
of Laurent series in $z^{-1}$ with coefficients in the state space, and we use the bilinear pairing of $\st$ to equip this space with the symplectic form
\begin{equation*}
\Omega (\sum_n a_n z^n , \sum_m b_m z^m) = \sum_{p+q=-1} (-1)^p (a_p,b_q).
\end{equation*}
The sum on the right is finite because the number of positive powers of $z$ is finite for an element of $\cH$.
This symplectic form induces a natural polarization
\begin{equation*}
\cH \simeq \cH^+ \oplus \cH^- \qquad \textrm{where } \cH^+:=\st \left[ z \right] \textrm{ and } \cH^-:=z^{-1}\st [\![ z^{-1} ]\!],
\end{equation*}
and identifies $\cH$ with the cotangent bundle of $\cH^+$.
If the decoration $\rR_\gamma$ is balanced, we denote the dual element of $\e_\gamma$ by $\e^\gamma$ and we recall that 
\begin{equation*}
\e^{\gamma} = \biggl( \prod_{\substack{j \left| \right. \gamma_j = 1 \\ N-j \textrm{ odd}}} \frac{-1}{a_j} \biggr) \cdot \e_{\gamma^{-1}}.
\end{equation*}
If the decoration $\rR_\gamma$ is not balanced, then $\e_\gamma=0$ is not a vector of the basis of $\st$.
The coordinates $\{q^\alpha_k\}$ and $\{ p^\beta_l\}$ of the basis $\{\e_\alpha z^k\}_{\alpha, k \geq 0}$ of $\cH^+$ and the basis $\{\e^\beta (-z)^{-1-l}\}_{\beta, l \geq 0}$ of $\cH^-$ are Darboux coordinates,
\begin{equation*}
\Omega = dp \wedge dq.
\end{equation*}
The genus-zero invariants \eqref{invariants} of FJRW theory generates a function $\cF_0 \colon \cH^+ \rightarrow \CC$
\begin{equation*}
\cF_0:=\sum_{n \geq 0} ~~ \sum_{b_1,\dotsc,b_n \geq 0} ~~ \sum_{\gamma_1,\dotsc,\gamma_n} \langle \tau_{b_1}(\e_{\gamma_1}),\dotsc,\tau_{b_n}(\e_{\gamma_n}) \rangle^\textrm{FJRW}_{0,n} \cfrac{(t_{b_1}^{\gamma_1}) \dotsm (t_{b_n}^{\gamma_n})}{n!},
\end{equation*}
expressed in coordinates $t(z):=q(z)+z$ (this is called dilaton shift), and the graph $\cL$ of its differential
\begin{equation*}
\cL := \left\lbrace (p,q) \textrm{ such that } p = d_q \cF_0\right\rbrace \subset \cH
\end{equation*}
is an exact Lagrangian sub-variety of $\cH$ on which lies the J-function $J \colon \st \rightarrow \cL$,
\begin{eqnarray*}
J(h,-z) = -z \e_\grj + h + 
\sum_{\substack{n \geq 0 \\ l \geq 0}} ~~ \sum_{\gamma_1,\dotsc,\gamma_n,\widetilde{\gamma}} \langle \e_{\gamma_1},\dotsc,\e_{\gamma_n},\tau_{l}(\e_{\widetilde{\gamma}})  \rangle^\textrm{FJRW}_{0,n+1} \cfrac{h^{\gamma_1}\dotsm h^{\gamma_n}}{n!(-z)^{l+1}} \e^{\widetilde{\gamma}},
\end{eqnarray*}
with $h = \sum h^\gamma \e_\gamma$ and $\e_\grj$ the unit element of $\st$ corresponding to the grading element defined by \eqref{gradingelement}.
The FJRW invariants satisfy the string and dilaton equations and the topological recursion relations stated in \cite{FJRW}. 
By \cite[Theorem 1]{Givental}, these relations mean geometrically that $\cL$ is a Lagrangian cone in $\cH$ and that for any point $p \in \cL$, the tangent space satisfies $T_p\cL \cap \cL = z T_p\cL$.
Furthermore, the J-function spans the Lagrangian cone and has the property $J(h,-z)= -z \e_\grj + h + \textrm{o} (z^{-1})$.
The J-function is determined as the unique function on $\cL$ with this property.


\subsection{A symplectic operator}\label{4.3}
The behavior of the J-function is essential for an understanding of the theory in genus-zero.
Any cohomological field theory with a state space $\st$ equipped with a different pairing is encoded in a Lagrangian cone of $\cH$ and distinct Lagrangian cones are related by symplectic operators.
The operator $\Delta$ of Theorem \ref{operateursympl} links an untwisted theory, whose Lagrangian cone is well-understood, to a twisted theory, whose evaluation in special values yields the FJRW theory.

Consider $\overline{\gamma} \in \Aut(W)^n$ and
set the formal virtual class twisted by variables $(s_l^j)_{l \geq 0,j}$ as
\begin{equation*}
\e_{\overline{\gamma}}:=\e_{\gamma_1} \otimes \dotsm \otimes \e_{\gamma_n} \mapsto \exp \biggl( \sum_{j=1}^N \sum_{l \geq 0} s^j_l \Ch_l\bigl(R \pi_*\cL^\rR_j\bigr) \biggr),
\end{equation*}
where the line bundle $\cL^\rR_j$ defined by \eqref{fibreLR} takes the form
\begin{displaymath}
\cL^\rR_j = \left\lbrace 
\begin{array}{ll}
\cL_j(-\sigma_1-\dotsb-\sigma_n) & \textrm{if } N-j \textrm{ is even}, \\
\cL_j & \textrm{if } N-j \textrm{ is odd}. \\
\end{array}\right. 
\end{displaymath}
This gives a twisted cohomological field theory with a Lagrangian cone $\cL_{\text{tw}}$, 
and the twisted bilinear pairing is given by
\begin{equation*}
\left( \e_{\gamma} , \e_{\gamma^{-1}} \right)_\textrm{tw} := \prod_{x_j \in \rR_{\gamma}} \exp( - s_0^j) \cdot \prod_{x_j \in \bB_\gamma \backslash \rR_{\gamma}} \exp( s_0^j).
\end{equation*}
The Lagrangian cone $\cL_{\text{un}} \subset \cH$ of the untwisted theory arises for the specific values
\begin{equation*}
s^j_l=0.
\end{equation*}
Take the specialization of the twisted theory to \eqref{parametresl},
\begin{equation*}
s_l^j:=s_l(\lambda_j)~, \quad \textrm{with } \lambda_1 := \lambda \quad \textrm{and} \quad \lambda_{j+1}:=\lambda^{(-a_1)\dotsm(-a_j)},
\end{equation*}
and let the parameter $\lambda$ tend to $1$.
By Theorems \ref{main} and \ref{compat}, we get the FJRW theory.
Observe that the pairing
\begin{equation*}
\left( \e_{\gamma} , \e_{\gamma^{-1}} \right)^\lambda := \prod_{x_j \in \rR_{\gamma}} (1-\lambda_j) \cdot \prod_{x_j \in \bB_\gamma \backslash \rR_{\gamma}} (1-\lambda_j)^{-1}
\end{equation*}
tends to the pairing \eqref{pairingcomput} of the state space $\st$.

Recall that the Bernouilli polynomials $B_n(x)$ are defined by
\begin{equation*}
\sum_{n=0}^\infty B_n(x) \frac{z^n}{n!} = \frac{z e^{xz}}{e^z-1}
\end{equation*}
and consider the rational number $\Gamma_j(i)$ determined by
\begin{equation*}
\exp(2 \ci \pi \Gamma_j(i))=\gamma_j(i)~, ~~ \Gamma_j(i) \in \left[ 0 , 1 \right[;
\end{equation*}
notice that the multiplicity (see \eqref{multiplicities}) of the line bundle $\cL_j$ at the marked point $\sigma_i$ is $r \cdot \Gamma_j(i)$.
Introduce the notation 
\begin{equation}\label{notationGamma}
\Gamma^\rR_j(i) = \left\lbrace \begin{array}{ll}
1 & \textrm{if } N-j \textrm{ is even and } \Gamma_j(i)=0, \\
\Gamma_j(i) & \textrm{otherwise.} \\
\end{array}\right. 
\end{equation}
First stated in the context of FJRW theory in \cite[Proposition 5.2]{Chiodo0}, the next theorem is a straightforward generalization of \cite[Proposition 4.1.5]{LG/CYquintique} or \cite[Theorem 3.6]{LG/CY}, and an analog of \cite[Theorem 4.1]{Coates3}.
The operator $\Delta$ relies on the formula for the Chern character $\Ch(R\pi_*\cL^\rR_j)$ in terms of psi-classes, which is the formula of Theorem \cite[Theorem 1.1.1]{Chiodo1} where we substitute $\Gamma^\rR_j(i)$ for $m_i/r$ and $\fq_j$ for $s/r$.

\begin{thm}[Chiodo--Zvonkine \cite{LG/CYquintique}]\label{operateursympl}
The transformation $\Delta \colon \cH \rightarrow \cH$ defined by
\begin{equation*}
\Delta = \bigoplus_{\gamma \in \textrm{Aut}(W)} \prod_{j=1}^N \exp \biggl( \sum_{l \geq 0} s_l^j \frac{B_{l+1}(\Gamma^\rR_j)}{(l+1)!} z^l \biggr)
\end{equation*}
gives a linear symplectomorphism between $(\cH,\Omega^{\textrm{un}})$ and $(\cH,\Omega^{\textrm{tw}})$, and
\begin{equation*}
\cL_\textrm{tw} = \Delta (\cL_\textrm{un}).
\end{equation*}
\end{thm}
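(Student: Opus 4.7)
The proof I have in mind follows the quantum Riemann--Roch strategy of Coates--Givental \cite{Coates3}, adapted to FJRW / $r$-spin theory by Chiodo--Zvonkine \cite{LG/CYquintique}. Since the statement concerns a formal power series identity in the parameters $s_l^j$ with value the identity at $s=0$, I would first reduce to its infinitesimal version: it suffices to verify, at the level of first-order deformations in each variable $s_l^j$, that the tangent spaces along $\cL_\mathrm{tw}$ and $\Delta(\cL_\mathrm{un})$ agree, and that the corresponding Hamiltonian flows on $\cH$ match. By standard integrability arguments this upgrades to the full statement.

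The key input is Chiodo's formula \cite[Theorem 1.1.1]{Chiodo1} applied to the line bundles $\cL_j^\rR$ of \eqref{fibreLR}. It expresses
\begin{equation*}
\Ch_l\bigl(R\pi_*\cL^\rR_j\bigr) = \frac{B_{l+1}(\fq_j)}{(l+1)!}\,\kappa_l \;-\; \sum_{i=1}^n \frac{B_{l+1}(\Gamma^\rR_j(i))}{(l+1)!}\,\bar\psi_i^{\,l} \;+\; \frac{1}{2}\sum_{\text{nodes}}\frac{B_{l+1}(\Gamma_j^{\text{node}})}{(l+1)!}\,\iota_*(\cdot),
\end{equation*}
with appropriate pull-backs at boundary nodes. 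Inserting this into the twisted correlators and decoding term by term is the substance of the proof. The first summand is a pure base class and, after using the string and dilaton equations, contributes only a trivial rescaling of the unit; in fact the Calabi--Yau hypothesis \eqref{CYcondd} causes the sum over $j$ of these kappa-contributions to vanish in the relevant degree. The second summand is a marked-point contribution: the factor $\exp\!\bigl(s_l^j\,B_{l+1}(\Gamma^\rR_j(i))/(l+1)!\cdot z^l\bigr)$ acting on the $i$-th insertion is precisely the restriction of $\Delta$ to the sector indexed by $\gamma(i)$. The third summand, the boundary/node contribution, is quadratic in insertions on the two branches of each node and, via the Bernoulli identity $B_{l+1}(1-x)=(-1)^{l+1}B_{l+1}(x)$, accounts exactly for the discrepancy between the untwisted and twisted symplectic pairings.

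Symplecticity of $\Delta$ then follows: the $l=0$ piece of the node-contribution exchange reproduces the multiplicative factor $\prod\exp(\pm s_0^j)$ relating $(\cdot,\cdot)_\mathrm{tw}$ to $(\cdot,\cdot)_\mathrm{un}$, while the higher-$l$ node terms match the fact that $\Delta$ acts as $\exp$ of an odd operator with respect to the polarization $\cH^+ \oplus \cH^-$ defined through $\Omega$, which is the precise condition ensuring a linear symplectomorphism. Combining this with the marked-point interpretation of the second summand gives $\cL_\mathrm{tw}=\Delta(\cL_\mathrm{un})$.

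The main technical obstacle is the bookkeeping around the nodal strata: at each node the two branches carry \emph{dual} monodromies $\gamma$ and $\gamma^{-1}$, so $\Gamma_j$ on one side is replaced by $1-\Gamma_j$ on the other, and the shift $\Gamma^\rR_j$ of \eqref{notationGamma} introduces a case distinction according to the parity of $N-j$ and to whether $\Gamma_j=0$. Keeping careful track of these shifts, together with the rescaling $\e_\gamma = c_\gamma^{-1} e_\gamma$ used to identify Polishchuk--Vaintrob and FJRW pairings via Theorem \ref{compat}, is the delicate part of the verification; the rest is an essentially mechanical matching of the two exponential operators on each isotypic sector.
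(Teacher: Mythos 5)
The paper itself gives no proof of this statement: it is quoted as a straightforward generalization of Chiodo--Zvonkine's $r$-spin result (\cite[Prop.~5.2]{Chiodo0}), of \cite[Prop.~4.1.5]{LG/CYquintique} and \cite[Thm.~3.6]{LG/CY}, and as an analog of \cite[Thm.~4.1]{Coates3}, with the only indication being that the operator $\Delta$ comes from Chiodo's Chern-character formula \cite[Thm.~1.1.1]{Chiodo1} after the substitutions $m_i/r\mapsto\Gamma^\rR_j(i)$, $s/r\mapsto\fq_j$. Your proposal recapitulates the quantum Riemann--Roch machinery behind those citations — Chiodo's formula, the split into $\kappa$-class, $\psi$-class and boundary contributions, and the Bernoulli identity $B_{l+1}(1-x)=(-1)^{l+1}B_{l+1}(x)$ to handle the dual monodromies at a node — and that overall architecture is indeed the right one.

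Two points in your write-up, however, are off. First, the Calabi--Yau hypothesis is not what kills the $\kappa$-contribution: Theorem~\ref{bigItheorem}, which directly builds on Theorem~\ref{operateursympl}, is explicitly stated for chain polynomials \emph{not necessarily of Calabi--Yau type}, so the symplectic operator statement cannot hinge on \eqref{CYcondd}. The $\kappa$-classes (whose Bernoulli weights involve $\fq_j$, not $\Gamma^\rR_j$) are not removed by a degree cancellation; they are absorbed by the usual Coates--Givental argument via the dilaton/string equations and the cone geometry of $\cL$, and this does not require any special relation among the $\fq_j$. Second, the rescaling $\e_\gamma=c_\gamma^{-1}e_\gamma$ coming from Theorem~\ref{compat} plays no role here: Theorem~\ref{operateursympl} relates the \emph{untwisted} and \emph{twisted} cones for arbitrary parameters $s_l^j$, before any specialization to FJRW theory, and the constants $c_\gamma$ only enter later when the twisted theory is specialized via $s^j_l=s_l(\lambda_j)$ and the limit $\lambda\to1$ is taken to recover FJRW invariants through Theorems~\ref{main} and~\ref{compat}. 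The symplecticity itself is a direct parity check: for narrow $j$ one has $\Gamma^\rR_j(\gamma)+\Gamma^\rR_j(\gamma^{-1})=1$ and the Bernoulli identity kills every term, while for broad $j$ only the $l=0$ term survives and matches the factor $\prod\exp(\mp s_0^j)$ in $(\cdot,\cdot)_{\text{tw}}$; the appeal to ``$\Delta=\exp$ of an odd operator with respect to the polarization'' is not the correct way to phrase this condition.
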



\subsection{Big I-function, J-function and mirror map}\label{4.4}
By Theorem \ref{operateursympl}, the operator $\Delta$ sends any function $\st \rightarrow \cL_\textrm{un}$ on a function $\st \rightarrow\cL_\textrm{tw}$.
Start with the untwisted J-function
\begin{equation*}
J_\textrm{un}(h,-z)= -z \sum_{n \geq 0} ~~~~~~ \sum_{\overline{\gamma} \in \Aut(W)^n} \cfrac{1}{(-z)^n} \cdot \frac{h^{\gamma(1)} \dotsm h^{\gamma(n)}}{n!} \cdot \e_{\omega(\overline{\gamma})},
\end{equation*}
with $h = \sum_{\gamma} h^\gamma \e_\gamma \in \st$.
Here, $\omega_j(\overline{\gamma})$ is the unique diagonal automorphism such that the component
\begin{equation}\label{virtualcomponent}
\sS_{0,n+1}(\gamma(1),\dotsc,\gamma(n),(\omega(\overline{\gamma}))^{-1})
\end{equation}
of the moduli space is non-empty, and equals by \eqref{selecrule}
\begin{equation}\label{notationpointsuppl}
\omega_j(\overline{\gamma}) :=  \gamma_j(1) \dotsm \gamma_j(n) \exp(2 \ci \pi \fq_j (1-n)).
\end{equation}
As in \cite[Lemma 4.1.10]{LG/CYquintique}, introduce the function
\begin{equation*}
G_y(x,z) := \sum_{m,l \geq 0} s_{l+m-1} \frac{B_m(y)}{m!} \frac{x^l}{l!} z^{m-1} \qquad \textrm{(with $s_{-1}:=0$),}
\end{equation*}
and the untwisted I-function
\begin{equation}\label{Ifunctionuntwisted}
I_\textrm{un}(h,-z) = \prod_{j=1}^N \exp \( - G_{\fq_j} (z \nabla_j,z) \) J_\textrm{un}(h,-z),
\end{equation}
where $\nabla_j$ is the differential operator
\begin{equation*}
\nabla_j := \sum_{\gamma} (\Gamma^\rR_j - \fq_j) h^{\gamma} \frac{\partial}{\partial h^{\gamma}}.
\end{equation*}
This function lies on the untwisted Lagrangian cone $\cL_\textrm{un}$ (see \cite[Lemma 4.1.10]{LG/CYquintique} or \cite[Equation (14)]{Coates3}).
Apply the operator $\Delta$ and get the function
\begin{equation}\label{Itwisted}
I_\textrm{tw}(h,-z)= -z \sum\limits_{\substack{n \geq 0 \\ \overline{\gamma} \in \Aut(W)^n}} \frac{h^{\gamma(1)} \dotsm h^{\gamma(n)}}{n! (-z)^n}  \exp \biggl( -\sum\limits_{\substack{1 \leq j \leq N \\ 0 \leq m <\cD^\rR_j(\overline{\gamma})}} \bs^j ( \omega^\rR_j(\overline{\Gamma}) z +m z ) \biggr) \e_{\omega(\overline{\gamma})},
\end{equation}
where
\begin{equation}\label{notationfin}
\begin{array}{lcl}
\cD^\rR_j(\overline{\gamma}) & := & \bigl(\fq_j + \sum_{i=1}^n (\Gamma^\rR_j(i) - \fq_j)\bigr) - \omega_j^\rR(\overline{\Gamma}) \\[0.2cm]
& = & \lfloor \fq_j + \sum_{i=1}^n (\Gamma^\rR_j(i)-\fq_j) \rfloor - \lfloor \omega^\rR_j(\overline{\Gamma}) \rfloor \\
\end{array}
\end{equation}
and where we adopt the convention
\begin{equation*}
\sum_{0 \leq m < -M} u_m := - \sum_{0 < m \leq M} u_{-m}.
\end{equation*}
In the expressions \eqref{Itwisted} and \eqref{notationfin}, $\omega^\rR_j(\overline{\Gamma})$ is the notation \eqref{notationGamma} used for $\omega_j(\overline{\gamma})$, that is,
\begin{equation*}
\exp (2 \ci \pi \omega^\rR_j(\overline{\Gamma})) = \omega_j(\overline{\gamma}) ~,~~ \omega^\rR_j(\overline{\Gamma}) \left\lbrace \begin{array}{ll}
 = 1 & \textrm{if } N-j \textrm{ is even and } \omega_j(\overline{\gamma})=1, \\
 \in \left[ 0,1 \right[  & \textrm{otherwise,} \\
\end{array}\right. 
\end{equation*}
and $\bs^j(t)$ is the generating series
\begin{equation*}
\bs^j(t):=\sum_{l \geq 0} s^j_l ~ \frac{t^l}{l!}.
\end{equation*}
Besides, we notice the relation
\begin{equation}\label{CH0}
- \Ch_0 (R\pi_*(\cL^\rR_j)) = \cD^\rR_j(\overline{\gamma}) + (-1)^{N-j} \delta_{\omega_j(\overline{\gamma})=1}.
\end{equation}


\begin{thm}\label{bigItheorem}
Let $W$ be a chain polynomial, not necessarily of Calabi--Yau type.
With the notations as above, the big I-function for the Landau--Ginzburg orbifold $(W,\Aut(W))$ lies on the associated Givental Lagrangian cone $\cL$ and
\begin{equation}\label{bigI}
I^\mathrm{big}(h,-z)= -z \sum\limits_{\substack{n \geq 0 \\ \overline{\gamma} \in \Aut(W)^n}} \frac{h^{\gamma(1)} \dotsm h^{\gamma(n)}}{n! (-z)^n} ~ M_1(\overline{\gamma}) \dotsm M_N(\overline{\gamma}) ~ \e_{\omega(\overline{\gamma})},
\end{equation}
where the contribution $M_j(\overline{\gamma})$ is
\begin{equation*}
M_j(\overline{\gamma}) = \left\lbrace
\begin{split}
\prod_{0 \leq m \leq \cD^\rR_j(\overline{\gamma}) - 1}  (\omega^\rR_j(\overline{\Gamma}) + m) z & \qquad \textrm{ when $\cD^\rR_j(\overline{\gamma}) \geq 1$}, \\
1 & \qquad \textrm{ when $\cD^\rR_j(\overline{\gamma}) = 0$}, \\
\prod_{1 \leq m \leq -\cD^\rR_j(\overline{\gamma})}  \cfrac{1}{(\omega^\rR_j(\overline{\Gamma}) - m) z}, & \qquad \textrm{ when $\cD^\rR_j(\overline{\gamma}) \leq -1$}. \\
\end{split}
\right.
\end{equation*}
In the case where $\omega^\rR_j(\overline{\Gamma})=1$ and $\cD^\rR_j(\overline{\gamma}) \leq -1$, we have always $\omega^\rR_{j+1}(\overline{\Gamma})=0$ and $\cD^\rR_{j+1}(\overline{\gamma}) \geq 1$ and we then take the convention
\begin{equation*}
\frac{\omega^\rR_{j+1}(\overline{\Gamma})}{\omega^\rR_j(\overline{\Gamma}) - 1} = -a_j.
\end{equation*}
\end{thm}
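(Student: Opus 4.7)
The strategy is the one sketched in the paragraphs preceding the statement: transport a known point of the untwisted Lagrangian cone $\cL_\textrm{un}$ to the twisted cone $\cL_\textrm{tw}$ using the symplectic operator $\Delta$ of Theorem \ref{operateursympl}, and then specialize the twisting parameters to obtain a point of the FJRW Lagrangian cone $\cL$. First I would record that the untwisted J-function $J_\textrm{un}$ lies on $\cL_\textrm{un}$, and that after applying the shift \eqref{Ifunctionuntwisted} by $\prod_j \exp(-G_{\fq_j}(z\nabla_j,z))$ the resulting function $I_\textrm{un}$ still lies on $\cL_\textrm{un}$; this is the chain-potential analog of \cite[Lemma 4.1.10]{LG/CYquintique} and follows by the same string-equation argument, since the operator $\nabla_j$ acts by a scalar on each component $\sS_{0,n+1}(\overline{\gamma})$. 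Applying $\Delta$ then produces $I_\textrm{tw}=\Delta\, I_\textrm{un}$ on $\cL_\textrm{tw}$; a direct (but careful) bookkeeping of the exponential contributions coming from $\Delta$ and from the $G_{\fq_j}$ factor yields exactly the compact form \eqref{Itwisted}, where the index $m$ ranges over the Bernoulli-polynomial identity $\sum_{0\leq m<\Gamma}B_n(\Gamma-m)/n! - B_n(\fq_j)/n! + \dotsm$ telescoping to a sum over $\bs^j(\omega^\rR_j z + m z)$.

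Next I would specialize the variables $s^j_l$ to the values $s_l(\lambda_j)$ from \eqref{parametresl}, with $\lambda_{j+1}=\lambda_j^{-a_j}$ and $\lambda_1=\lambda$. By Lemma \ref{formaldev}, the generating series $\bs^j(t)=\sum_l s_l(\lambda_j) t^l/l!$ equals the formal logarithm of the class $\fc_{\lambda_j}$ applied to a one-dimensional bundle with Chern root $t$, so that
\[
\exp\bigl(-\bs^j(\omega^\rR_j z+mz)\bigr)=\frac{1-\lambda_j e^{(\omega^\rR_j+m)z}}{1-\lambda_j}.
\]
Substituting this into \eqref{Itwisted} converts the exponential into a product of $(1-\lambda_j e^{(\omega^\rR_j+m)z})/(1-\lambda_j)$ over the appropriate range of $m$. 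This product is meromorphic in $\lambda$, and its limit as $\lambda\to 1$ can be computed by l'Hôpital (or, more invariantly, by the same limit procedure used in Theorem \ref{main}): factors with $\omega^\rR_j+m\neq 0$ tend to $(\omega^\rR_j+m)z$, while the special factor with $\omega^\rR_j+m=0$ produces a $1/z$ (with sign) via the cancellation against the denominator $1-\lambda_j$. This is how one reads off the formulas for $M_j(\overline{\gamma})$ in the three regimes $\cD^\rR_j\geq 1$, $\cD^\rR_j=0$, $\cD^\rR_j\leq -1$.

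The main technical obstacle is the degenerate case $\omega^\rR_j(\overline{\Gamma})=1$ with $\cD^\rR_j(\overline{\gamma})\leq -1$, where the naive factor $1/(\omega^\rR_j-m)z$ at $m=1$ is divergent. In this case the chain relation implies that the next vertex satisfies $\omega^\rR_{j+1}(\overline{\Gamma})=0$ with $\cD^\rR_{j+1}(\overline{\gamma})\geq 1$, so that the factor $M_{j+1}$ contains a matching zero at $m=0$. The plan is to pair these two singular factors before passing to the limit, so that the ratio $\omega^\rR_{j+1}(\overline{\Gamma})/(\omega^\rR_j(\overline{\Gamma})-1)$ evaluates at $\lambda=1$ to $-a_j$ by the recursive definition $\lambda_{j+1}=\lambda_j^{-a_j}$ and a Taylor expansion; this explains the convention stated at the end of the theorem. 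A bookkeeping check using \eqref{CH0} together with the Calabi--Yau condition \eqref{CYcondd} shows that the singular pairs are exactly matched in this way, so the limit is finite and equals the displayed $M_1\dotsm M_N$.

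Finally, to conclude that $I^{\mathrm{big}}$ lies on the FJRW Lagrangian cone $\cL$, I would combine Theorem \ref{main} with Theorem \ref{compat}: the specialization $s_l^j\mapsto s_l(\lambda_j)$, $\lambda\to 1$, identifies the twisted virtual class with Polishchuk--Vaintrob's virtual class, which in turn coincides with the (rescaled) FJRW virtual class by Theorem \ref{compat}; since $\Delta$ is a symplectomorphism and the rescaling is built into our choice of basis $\e_\gamma=e_\gamma/c_\gamma$, the image $\lim_{\lambda\to 1}\Delta\, I_\textrm{un}$ lies on $\cL$. The formula for $I^{\mathrm{big}}$ then follows from the explicit computation above.
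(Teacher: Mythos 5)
Your high-level strategy — transport the untwisted I-function via the symplectic operator $\Delta$, specialize the twisting parameters to \eqref{parametresl}, and take the limit $\lambda\to 1$ — is exactly the paper's approach, so the architecture of the argument is sound. However, the explicit formula you give for $\exp(-\bs^j(\omega^\rR_j z+mz))$ is wrong, and the error is not cosmetic: it would make the limit you need to compute diverge. You wrote
\begin{equation*}
\exp\bigl(-\bs^j(\omega^\rR_j z+mz)\bigr)=\frac{1-\lambda_j e^{(\omega^\rR_j+m)z}}{1-\lambda_j},
\end{equation*}
and then assert that factors with $\omega^\rR_j+m\neq 0$ tend to $(\omega^\rR_j+m)z$. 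But with this formula the numerator tends to $1-e^{(\omega^\rR_j+m)z}\neq 0$ while the denominator tends to $0$, so the ratio blows up. The correct identity coming from \eqref{parametresl} (keeping the Todd contribution $B_l(0)/l$ as well as the $\gamma(l,k)$ part) is
\begin{equation*}
\exp\bigl(-\bs(t,\lambda_j)\bigr)=\frac{e^t-\lambda_j}{e^t-1}\,t\quad(t\neq 0),\qquad \exp(-\bs(0,\lambda_j))=1-\lambda_j,
\end{equation*}
and it is this formula that yields the finite limit $(\omega^\rR_j+m)z$ when $t=(\omega^\rR_j+m)z\neq 0$, together with the $1-\lambda_j$ (resp.\ $1/(1-\lambda_j)$) factors that must be paired between consecutive indices in the degenerate case. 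Relatedly, $\bs^j(t)$ is the \emph{minus}-logarithm of $\fc_{\lambda_j}$ applied to a line bundle of root $t$ (equivalently $+\log\fc_{\lambda_j}$ of its K-theoretic opposite); the Todd factor from \eqref{charclass} cannot be dropped, and the sign must be tracked carefully or the singular factors will not cancel.

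A second, separate, issue: you invoke the Calabi--Yau condition \eqref{CYcondd} to establish that ``the singular pairs are exactly matched,'' i.e.\ the implication $\cD^\rR_j(\overline{\gamma})\leq -1\Rightarrow \cD^\rR_{j+1}(\overline{\gamma})\geq 1$ from \eqref{degco}. But Theorem \ref{bigItheorem} is stated for chain polynomials that are \emph{not} necessarily Calabi--Yau, so this inference must be established without \eqref{CYcondd}. It is in fact a direct computation from \eqref{notationfin} using only the chain relations $a_j\fq_j+\fq_{j+1}=1$ and $a_j\Gamma_j(i)+\Gamma_{j+1}(i)\in\ZZ$; no degree condition on $W$ is required, and invoking one would needlessly restrict the theorem.
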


\begin{proof}
Look at the twisted I-function \eqref{Itwisted} and specialize the parameters $s^j_l$ to \eqref{parametresl}, observing that
\begin{equation*}
\exp (-\bs (t,x)) = \left\lbrace 
\begin{array}{ll}
\cfrac{e^t-x}{e^t-1} ~ t & \textrm{ if $t \neq 0$} \\
1-x & \textrm{ if $t = 0$}
\end{array} \right. 
\quad \textrm{with } \bs (t,x)= \sum_{l \geq 0} s_l(x)\frac{t^l}{l!}.
\end{equation*}
When we take the limit $\lambda \to 1$ to get
\begin{equation*}
\exp \biggl( -\sum\limits_{\substack{1 \leq j \leq N \\ 0 \leq m <\cD^\rR_j(\overline{\gamma})}} \bs ( \omega^\rR_j(\overline{\Gamma}) z +m z , \lambda_j) \biggr) \longrightarrow M_1(\overline{\gamma}) \dotsm M_N(\overline{\gamma}),
\end{equation*}
we have three cases to face:
\begin{enumerate}
\item if $\omega^\rR_j(\overline{\Gamma})=1$, $\cD^\rR_j(\overline{\gamma}) \leq -1$, and $m=-1$, then
\begin{equation*}
\exp \left(\bs ( \omega^\rR_j(\overline{\Gamma}) z - z , \lambda_j) \right) = \cfrac{1}{1-\lambda_j},
\end{equation*}
\item if $\omega^\rR_j(\overline{\Gamma})=0$, $\cD^\rR_j(\overline{\gamma}) \geq 1$, and $m=0$, then
\begin{equation*}
\exp \left(-\bs ( \omega^\rR_j(\overline{\Gamma}) z , \lambda_j) \right) = 1-\lambda_j,
\end{equation*}
\item otherwise, we have
\begin{eqnarray*}
\exp \left( -\bs ( \omega^\rR_j(\overline{\Gamma}) z + m z , \lambda_j) \right)  & \longrightarrow & (\omega^\rR_j(\overline{\Gamma}) + m) z \qquad \textrm{or} \\
\exp \left( \bs ( \omega^\rR_j(\overline{\Gamma}) z - m z , \lambda_j) \right) & \longrightarrow & \frac{1}{(\omega^\rR_j(\overline{\Gamma}) - m) z}.
\end{eqnarray*}
\end{enumerate}

The main observation is that if case $(1)$ appears for an index $j$, then case $(2)$ appears for the following index $j+1$; in particular, case $(1)$ never appears for $j=N$.
Indeed, as $\omega^\rR_j(\overline{\gamma})=1 \implies \omega^\rR_{j+1}(\overline{\gamma})=1$, then we just have to prove
\begin{equation}\label{degco}
\cD^\rR_j(\overline{\gamma}) \leq -1 \implies \cD^\rR_{j+1}(\overline{\gamma}) \geq 1
\end{equation}
and it is done by a direct computation.
%
%
Thus, when case $(1)$ appears for $j$, we get
\begin{equation*}
\exp \left(\bs ( \omega^\rR_j(\overline{\Gamma}) z - z , \lambda_j) - \bs ( \omega^\rR_j(\overline{\Gamma}) z , \lambda_{j+1})\right) = \cfrac{1-\lambda_{j+1}}{1-\lambda_j} \longrightarrow -a_j.
\end{equation*}

At last, we end with the big I-function \eqref{bigI}.
Since the twisted I-function \eqref{Itwisted} lies on the twisted Lagrangian cone and the FJRW theory is a limit of the twisted theory, then the big I-function \eqref{bigI} lies on the Lagrangian cone $\cL$.
\end{proof}

\begin{rem}\label{powerdisc}
The powers of $z$ in the big I-function are
\begin{equation}\label{powerofzbig}
1-n-N+2 \sum_{j=1}^N \fq_j + \sum_{i=1}^n \frac{1}{2}\deg (\e_{\gamma(i)}) + \frac{1}{2}\deg(\e_{\omega(\overline{\gamma})^{-1}}).
\end{equation}
When the argument $h$ is a sum of states of degrees less than $2$, then the powers of $z$ are less than $1$.
Furthermore, the coefficient of $z$ is of the form $\omega_0(h) \e_\grj$ where $\omega_0$ is a scalar function supported on the subspace of degree-two states.
Thus, we can deduce the so-called small J-function, as in the following theorem.
\end{rem}

\begin{thm}\label{thmfinal}
Let $W$ be a chain polynomial of Calabi--Yau type\footnote{We assume Calabi--Yau condition for the formula \eqref{PicardFuchsequation} to be the Picard--Fuchs equation of the mirror polynomial  $W^\vee$, see \cites{Morrisson,Ebelingstudent}.}, with weights $w_1,\dotsc,w_N$ and degree $d:=w_1+\dotsb+w_N$.
The I-function defined for $t \in \CC^*$ by
\begin{equation}\label{finalformulaI}
I(t,-z) = -z \sum_{k=1}^\infty t^k \frac{\prod_{j=1}^N \prod_{\delta_j < b < \fq_j k, \langle b \rangle = \langle \fq_j k \rangle} bz}{
\prod_{0 < b < k} bz} e_{\grj^k}, \quad \delta_j := - \delta_{\{N-j \textrm{ is odd}\}}
\end{equation}
lies on the Lagrangian cone $\cL$ of the FJRW theory of the Landau--Ginzburg orbifold $(W,\Aut(W))$. 
This function satisfies the Picard--Fuchs equation
\begin{equation}\label{PicardFuchsequation}
\biggl[ t^d \prod_{j=1}^N \prod_{c=0}^{w_j-1} (\fq_j t \frac{\partial}{\partial t} + c) - \prod_{c=1}^d (t \frac{\partial}{\partial t} - c)\biggr] \cdot I(t, -z) = 0
\end{equation}
of the mirror polynomial $W^\vee$.

Furthermore, there is a function $\omega_0(t) \colon \CC^* \to \CC^*$ and some functions $\omega_1(t),\dotsc$, $\omega_{N-2}(t)$ with values in $\st_\mathrm{narrow}$ such that
\begin{equation}\label{powerofz}
I(t, -z) = \omega_0(t) \cdot e_\grj \cdot (-z) + \omega_1(t) + \omega_2(t) (-z)^{-1} + \dotsb + \omega_{N-2}(t) (-z)^{3-N}
\end{equation}
and the J-function is
\begin{equation}\label{finalformulaJ}
J(\tau(t), -z) = e_\grj \cdot (-z) + \tau(t) + \frac{\omega_2(t)}{\omega_0(t)} \cdot (-z)^{-1} + \dotsb + \frac{\omega_{N-2}(t)}{\omega_0(t)} \cdot (-z)^{3-N},
\end{equation}
where the so-called mirror map $\tau$ is
\begin{equation*}
\tau (t) = \frac{\omega_1(t)}{\omega_0(t)}.
\end{equation*}
Restricted to a sufficiently small pointed disk $\Delta^*$ of $\CC$ around $0$, the mirror map $\tau$ is an embedding of $\Delta^*$ into the degree-$2$ part of the state space $\st$.
\end{thm}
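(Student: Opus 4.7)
The plan is to derive the small I-function \eqref{finalformulaI} from the big I-function of Theorem \ref{bigItheorem} by an appropriate specialization, then deduce the Picard--Fuchs equation, the polynomial structure in $z$, and finally the J-function formula via the cone property of $\cL$. The first step is to specialize the argument $h$ of \eqref{bigI} to a one-parameter family supported on a single narrow sector. Concretely, one takes $h = t\cdot\e_{\grj^2}$ (up to a rescaling), so that only insertions with $\gamma(i)=\grj^2$ contribute. With $n$ such insertions, the sector rule \eqref{notationpointsuppl} gives $\omega(\overline{\gamma})=\grj^{n+1}$, so reindexing $k=n+1$ produces a sum over $k\geq 1$ with output $\e_{\grj^k}$. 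The contributions $M_j(\overline{\gamma})$ of Theorem \ref{bigItheorem}, together with the combinatorial factor $1/(n!(-z)^n)$, collapse into the explicit products of $bz$-terms appearing in \eqref{finalformulaI}. The parity shift $\delta_j=-\delta_{\{N-j\text{ odd}\}}$ arises from the asymmetric definition \eqref{notationGamma} of $\Gamma_j^\rR$, and the fractional parts $\omega_j^\rR$ reduce to $\langle k\fq_j\rangle$.

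Next, I would verify the Picard--Fuchs equation \eqref{PicardFuchsequation} directly on the explicit series. The operator $t\,\partial/\partial t$ acts as multiplication by $k$ on the $k$-th coefficient, so \eqref{PicardFuchsequation} becomes an algebraic identity between the terms of index $k$ and $k+d$. Using the Calabi--Yau condition $d=\sum_j w_j$, the ratio of consecutive coefficients of $I$ matches the operator on the left of \eqref{PicardFuchsequation} after rewriting $\prod_{c=0}^{w_j-1}(\fq_j k+c)$ in terms of the shifted $b$-range. This is a standard but slightly lengthy hypergeometric manipulation.

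To obtain the J-function, I would analyse the powers of $z$ in $I(t,-z)$ using \eqref{powerofzbig} together with the Calabi--Yau condition \eqref{CYcondd}. The $k$-th term involves powers of $z$ ranging between $1$ and $3-N$, yielding the polynomial structure \eqref{powerofz}. The coefficient of $z^1$ is carried entirely by the $\grj$-sector, since $\e_\grj$ is the unique narrow state of degree $0$; this defines $\omega_0(t)\cdot\e_\grj$. Since $\cL$ is a cone, the rescaled function $I(t,-z)/\omega_0(t)$ also lies on $\cL$ and has the shape $-z\,\e_\grj + \tau(t) + O(z^{-1})$ with $\tau(t)=\omega_1(t)/\omega_0(t)$ read off from the $z^0$ term. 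By the characterization of $J$ as the unique point of $\cL$ of the form $-z\,\e_\grj + h + \mathrm{o}(z^{-1})$, we identify $I/\omega_0 = J(\tau(t),-z)$, which is \eqref{finalformulaJ}. The embedding property of $\tau$ on a small pointed disk $\Delta^*$ follows from a leading-order computation: the lowest-$k$ term contributing to $\omega_1$ produces a nonzero $d\tau/dt$ in a punctured neighborhood of $0$.

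The main obstacle lies in Step $1$, where the combinatorics of the products $M_j(\overline{\gamma})$ of Theorem \ref{bigItheorem} must be matched precisely with the shifted products $\prod_b bz$ of \eqref{finalformulaI}. The three cases $\cD_j^\rR\geq 1$, $\cD_j^\rR=0$, and $\cD_j^\rR\leq -1$ each contribute differently, and the convention $\omega^\rR_{j+1}/(\omega^\rR_j-1)=-a_j$ triggered when $\omega_j^\rR=1$ and $\cD_j^\rR\leq -1$ injects the chain exponents $-a_j$ at exactly the right indices. Handling these cases consistently across all $j$ --- and verifying that the parity shift $\delta_j$ reproduces the correct index set of \eqref{finalformulaI} --- is the delicate bookkeeping at the heart of the argument. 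Once Step $1$ is complete, the Picard--Fuchs equation, the polynomial-in-$z$ structure, and the extraction of the J-function follow relatively routinely from the cone property and the uniqueness characterization of $J$.
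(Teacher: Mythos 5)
Your proposal follows essentially the same route as the paper's proof: specialize the big I-function to $h = t\cdot\e_{\grj^2}$ (the paper takes $I(t,-z) := t\cdot I^{\mathrm{big}}(-t\,\e_{\grj^2},-z)$), match the $M_j$-products with the shifted $b$-products in \eqref{finalformulaI} (noting that broad contributions vanish), check Picard--Fuchs directly as a hypergeometric-type identity, use the $z$-degree estimate to get \eqref{powerofz}, divide by $\omega_0$ and invoke the cone property plus the uniqueness characterization of $J$, and extract the embedding property from $\tau(t)=t+o(t)$. The approach and the order of the steps coincide with the paper.
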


\begin{proof}
Restrict the argument of the big I-function \eqref{bigI} to
\begin{equation*}
h = t \cdot \e_{\grj^2} \in \st
\end{equation*}
and consider the function $I(t,-z) := t \cdot I^\mathrm{big}(- t \cdot \e_{\grj^2}, -z)$, which is equal to
\begin{equation}\label{finalformulaIbis}
I(t,-z) = -z \sum_{k=1}^\infty t^k \frac{\prod_{j=1}^N \prod_{\delta_j < b < \fq_j k, \langle b \rangle = \langle \fq_j k \rangle} bz}{
\prod_{0 < b < k} bz} \e_{\grj^k}, \quad \delta_j := - \delta_{\{N-j \textrm{ is odd}\}}.
\end{equation}
By the properties of the Lagrangian cone, this function lies also on the cone $\cL$.

Observe that \eqref{finalformulaIbis} slightly differs from \cite[Equation (40)]{LG/CY}, but only in appearance.
Indeed, in \cite[Equation (40)]{LG/CY}, there are no broad states and in equation \eqref{finalformulaIbis} their contributions equal zero, because the product 
\begin{equation*}
\prod_{-1 < b < \fq_j k, \langle b \rangle = \langle \fq_j k \rangle} bz
\end{equation*}
vanishes when $\langle \fq_j k \rangle=0$ (it happens when $N-j$ is odd) and because when $\langle \fq_N k \rangle=0$ and $\langle \fq_j k \rangle \neq 0$ for $j < N$, the decoration $\rR_{\grj^k}$ is not balanced and we have $\e_{\grj^k}=0$.
Moreover, since every narrow state satisfies $\e_{j^k}=e_{j^k}$, then \eqref{finalformulaIbis} $=$ \eqref{finalformulaI}. 

It is straightforward to check that $I(t, -z)$ satisfies the Picard--Fuchs equation \eqref{PicardFuchsequation}, as in \cite[Equation (57)]{LG/CY}.
Equation \eqref{powerofz} follows from Remark \ref{powerdisc}.
Since the J-function is the unique function on $\cL$ with the property $J(h, -z)= -z e_\grj + h + \textrm{o} (z^{-1})$, then we obtain \eqref{finalformulaJ}.
The mirror map $\tau$ satisfies $\tau(t) = t + o(t)$, hence it defines an embedding of a sufficiently small pointed disk $\Delta^*$ in $\st$. By a simple computation, we check that $\deg (\omega_1(t))=2$ for any $t \in \Delta^*$. 
\end{proof}

\begin{exa*}
Consider the chain polynomial $W=x_1^2x_2+x_2^3x_3+x_3^5x_4+x_4^{10}x_5+x_5^{11}$ with weights $(4,3,2,1,1)$ and degree $11$.
By Theorem \ref{thmfinal}, we can compute explicitly functions $I$ and $J$.
Looking at the coefficient of $z^{-1} e_{\grj^5}$, we show that the following relation between certain non-concave correlators must hold:
\begin{equation}\label{correlation}
\frac{121}{12} \langle e_{\grj^2}^4e_{\grj^6} \rangle - \frac{11}{2} \langle e_{\grj^2}^2e_{\grj^3}e_{\grj^6} \rangle + \frac{5}{3} \langle e_{\grj^2} e_{\grj^4} e_{\grj^6} \rangle + \frac{1}{4} \langle e_{\grj^3}^2 e_{\grj^6} \rangle = 3.
\end{equation}
We have already computed
\begin{equation*}
\langle e_{\grj^3},e_{\grj^3},e_{\grj^6} \rangle=-2 \quad \textrm{and} \quad \langle e_{\grj^2}^4e_{\grj^6} \rangle = -\frac{2}{121} \quad \textrm{(see \eqref{threepoint}.}
\end{equation*}
The correlator $\langle e_{\grj^2} e_{\grj^4} e_{\grj^6} \rangle$ equals $1$ (this is a concave case).
The last correlator to compute is
\begin{equation*}
\langle e_{\grj^2}^2e_{\grj^3}e_{\grj^6} \rangle = -\frac{4}{11}.
\end{equation*}
\end{exa*}

\begin{bibdiv}
\begin{biblist}

\bib{Vistoli2}{article}{
        author={Abramovich, Dan},
        author={Vistoli, Angelo},
        title={Compactifying the space of stable maps},
        journal={J. Amer. Math. Soc.},
        volume={15},
        date={2002},
        number={1},
        pages={27-75},
      }
     

\bib{Hubsch}{article}{
      author={Berglund, Per},
      author={H\"ubsch, Tristan},
      title={A generalized construction of mirror manifolds},
      journal={Nuclear Physics B},
      volume={393},
      date={1993},
      number={},
      pages={391-397},
    }

\bib{Li2}{article}{
       author={Chang, Huai-Liang},
       author={Li, Jun},
       author={Li, Wei-Ping},
       title={Witten's top Chern class via cosection localization},
       journal={available at arXiv:1303.7126},
       volume={},
       date={},
       number={},
       pages={},
     }

 \bib{ChiodoJAG}{article}{
    author={Chiodo, Alessandro},
    title={The Witten top Chern class via K-theory},
    journal={J. Algebraic Geom.},
    volume={15},
    date={2006},
    number={},
    pages={681-707},
  }

\bib{Chiodo1}{article}{
   author={Chiodo, Alessandro},
   title={Towards an enumerative geometry of the moduli space of twisted curves and rth roots},
   journal={Compos. Math.},
   volume={144},
   date={2008},
   number={6},
   pages={1461-1496},
 }

\bib{LG/CY}{article}{
   author={Chiodo, Alessandro},
   author={Iritani, Hiroshi},
   author={Ruan, Yongbin},
   title={Landau--Ginzburg/Calabi--Yau correspondence, global mirror symmetry and Orlov equivalence},
   journal={Publications math\'ematiques de l'IH\'ES},
   volume={119},
   date={2014},
   number={1},
   pages={127-216},
 }

\bib{LG/CYquintique}{article}{
   author={Chiodo, Alessandro},
   author={Ruan, Yongbin},
   title={Landau--Ginzburg/Calabi--Yau correspondence for quintic three-folds via symplectic transformations},
   journal={Invent. Math.},
   volume={182},
   date={2010},
   number={1},
   pages={117-165},
 }

\bib{statespace}{article}{
   author={Chiodo, Alessandro},
   author={Ruan, Yongbin},
   title={LG/CY correspondence: the state space isomorphism},
   journal={Adv. Math.},
   volume={227},
   date={2011},
   number={6},
   pages={2157-2188},
 }

\bib{Chiodo2}{article}{
       author={Chiodo, Alessandro},
       author={Ruan, Yongbin},
       title={A global mirror symmetry framework for the Landau-Ginzburg/Calabi-Yau correspondence},
       journal={to appear in the Special Volume of the Ann. Inst. Fourier on the Workshop on Geometry and Physics of the Landau-Ginzburg model, available at arXiv:1307.0939},
       volume={},
       date={},
       number={},
       pages={},
     }  

\bib{Chiodo0}{article}{
   author={Chiodo, Alessandro},
   author={Zvonkine, Dimitri},
   title={Twisted r-spin potential and Givental' s quantization},
   journal={Adv. Theor. Math. Phys. },
   volume={13},
   date={2009},
   number={5},
   pages={1335-1369},
 }
 

\bib{Coates3}{article}{
       author={Coates, Tom},
       author={Corti, Alessio},
       author={Iritani, Hiroshi},
       author={Tseng, Hsian-Hua},
       title={Computing genus-zero twisted Gromov--Witten invariants},
       journal={Duke Math. J.},
       volume={147},
       date={2009},
       number={3},
       pages={377-438},
     }

\bib{Coates1}{article}{
       author={Coates, Tom},
       author={Gholampour, Amin},
       author={Iritani, Hiroshi},
       author={Jiang, Yunfeng},
       author={Johnson, Paul},
       author={Manolache, Cristina},
       title={The quantum Lefschetz hyperplane principle can fail for positive orbifold hypersurfaces},
       journal={Mathematical Research Letters},
       volume={19},
       date={2012},
       number={5},
       pages={997-1005},
     }

    \bib{Coates2}{article}{
       author={Coates, Tom},
       author={Givental, Alexander},
       title={Quantum Riemann--Roch, Lefschetz and Serre},
       journal={Ann. of Math.},
       volume={165},
       date={2007},
       number={1},
       pages={15-53},
     }

     

\bib{Faber}{article}{
   author={Faber, Carel},
   title={Program to compute intersections on the moduli space of pointed curves },
   journal={available at http://math.stanford.edu/vakil/programs/index.html},
   volume={},
   date={},
   number={},
   pages={},
   } 

\bib{FSZ}{article}{
   author = {Faber, Carel},
   author = {Shadrin, Sergey},
   author = {Zvonkine, Dimitri},
   title={Tautological relations and the r-spin Witten conjecture},
   journal={Annales Scientifiques de l'\'Ecole Normale Sup\'erieure},
   volume={43},
   date={2010},
   number={4},
   pages={621-658},
 }

\bib{Shen2}{article}{
   author={Fan, Huijun},
   author={Shen, Yefeng},
   title={Quantum ring of singularity $X^p+ XY^q$},
   journal={available at arXiv:0902.2327v1},
   volume={},
   date={},
   number={},
   pages={},
 }

\bib{FJRW}{article}{
   author={Fan, Huijun},
   author={Jarvis, Tyler},
   author={Ruan, Yongbin},
   title={The Witten equation, mirror symmetry and quantum singularity theory},
   journal={Ann. of Math.},
   volume={178},
   date={2013},
   number={1},
   pages={1-106},
 }

 \bib{FJRW2}{article}{
    author={Fan, Huijun},
    author={Jarvis, Tyler},
    author={Ruan, Yongbin},
    title={The Witten equation and its virtual fundamental cycle},
    journal={available at arXiv:0712.4025},
    volume={},
    date={},
    number={},
    pages={},
  }

\bib{Fult}{article}{
        author={Fulton, William},
        author={Lang, Serge},
        title={Riemann--Roch algebra},
        journal={Springer--Verlag},
        volume={},
        date={1985},
        number={277},
        pages={},
      }

\bib{Ebelingstudent}{article}{
        author={G\"ahrs, Swantje},
        title={Picard--Fuchs equations of special one-parameter families of invertible polynomials},
        journal={available at arXiv:1109.3462v1},
        volume={},
        date={},
        number={},
        pages={},
      }

\bib{Gi}{article}{
   author={Givental, Alexander B.},
   title={A mirror theorem for toric complete intersections},
   journal={Topological field theory, primitive forms and related topics, Progr. Math.},
   volume={160},
   date={Kyoto, 1996},
   number={},
   pages={141-175},
 }
 
\bib{Givental}{article}{
    author={Givental, Alexander B.},
    title={Symplectic geometry of Frobenius structures},
    journal={Frobenius manifolds, Aspects Math., E36, Vieweg, Wiesbaden},
    volume={},
    date={2004},
    number={},
    pages={91-112},
  }

\bib{Green}{article}{
    author={Green, Mark L.},
    title={A new proof of the explicit Noether--Lefschetz theorem},
    journal={J. Differential Geom.},
    volume={27},
    date={1988},
    number={1},
    pages={155-159},
  }


\bib{Guest}{book}{
   author={Guest, Martin A.},
   title={From quantum cohomology to integrable systems},
   journal={Oxford Graduate Texts in Mathematics},
   volume={},
   date={2008},
   number={},
   pages={},
 }

\bib{Krawitz}{article}{
  author={Krawitz, Marc},
  title={FJRW rings and Landau--Ginzburg mirror symmetry},
  journal={available at arXiv:0906.0796},
  volume={},
  date={},
  number={},
  pages={},
}

\bib{KShen}{article}{
        author={Krawitz, Marc},
        author={Shen, Yefeng},
        title={Landau--Ginzburg/Calabi--Yau Correspondence of all Genera for Elliptic Orbifold $p^1$},
        journal={available at arXiv:1106.6270},
        volume={},
        date={},
        number={},
        pages={},
      }

\bib{Kreuzer2}{article}{
        author={Kreuzer, Maximilian},
        title={The mirror map for invertible LG models},
        journal={Phys. Lett. B},
        volume={328},
        date={1994},
        number={3-4},
        pages={312-318},
      }

\bib{Kreuzer}{article}{
   author={Kreuzer, Maximilian},
   author={Skarke, Harald},
   title={On the classification of quasihomogeneous functions},
   journal={Comm. Math. Phys.},
   volume={150},
   date={1992},
   number={1},
   pages={137-147},
 }



\bib{LLY}{article}{
   author={Lian, Bong H.},
   author={Liu, Kefeng},
   author={Yau, Shing-Tung},
   title={Mirror principle. I},
   journal={Asian J. Math.},
   volume={1},
   date={1997},
   number={4},
   pages={729-763},
 }


\bib{Moch}{article}{
   author={Mochizuki, T.},
   title={The virtual class of the moduli stack of stable r-spin curves},
   journal={Comm. Math. Phys.},
   volume={264},
   date={2006},
   number={1},
   pages={1-40},
 }

\bib{Morrisson}{article}{
   author={Morrisson, David R.},
   title={Picard--Fuchs equations and mirror maps for hypersurfaces},
   journal={available at arXiv:hep-th/9111025},
   volume={},
   date={},
   number={},
   pages={},
 }

\bib{Polish1}{article}{
   author={Polishchuk, Alexander},
   author={Vaintrob,Arkady},
   title={Matrix factorizations and cohomological field theories},
   journal={available at arXiv:1105.2903},
   volume={},
   date={},
   number={},
   pages={},
 }


\bib{Polish3}{article}{
   author={Polishchuk, Alexander},
   author={Vaintrob,Arkady},
   title={Chern characters and Hirzebruch--Riemann--Roch formula for matrix factorizations},
   journal={Duke Math. Journal},
   volume={161},
   date={2012},
   number={10},
   pages={1863-1926},
 }



\bib{Witten}{article}{
       author={Witten, Edward},
       title={Phases of N=2 theories in two dimensions},
       journal={Nuclear Phys. B},
       volume={403},
       date={1993},
       number={1-2},
       pages={159-222},
     }

\bib{Zi}{article}{
   author={Zinger, Aleksey},
   title={Standard vs. reduced genus-one Gromov--Witten invariants},
   journal={Geom. Topol.},
   volume={12},
   date={2008},
   number={2},
   pages={1203-1241},
 }

\end{biblist}
\end{bibdiv}

\end{document}